  \theoremstyle{plain}
\newtheorem{theorem}{Theorem}[section]
\newtheorem{proposition}[theorem]{Proposition}
\newtheorem{lemma}[theorem]{Lemma}
\theoremstyle{definition}
\newtheorem{scheme}[theorem]{Scheme}
\newtheorem{assumption}[theorem]{Assumption}
\def\div{\mathop{\mathrm{div}}\nolimits}
\def\!{\mathop{\mathrm{!}}}
\def\argmin{\mathop{\mathrm{argmin}}}
\providecommand{\abs}[1]{\lvert#1\rvert}
  \providecommand{\lemmaname}{Lemma}
\def\PSopt{P_{\mathrm{opt}}}
\def\R{\mathbb{ R}}
\def\C{\mathcal{ C}}
\def\Cc{\mathcal{ C}}
\def\P{\mathcal{P}}
\def\F{\mathcal{F}}
\def\K{\mathcal{K}}
\def\K{\mathcal{K}}
\def\T{\mathcal{T}}
\def\W{\mathcal{W}}
\def\b{\textbf{b}}
\def\x{\textbf{x}}
\def\y{\textbf{y}}
\def\z{\mathbf{z}}
\def\w{\mathbf{w}}
\newlength{\boxwidth}
\title{On the fundamental solution and a variational formulation for a degenerate diffusion of Kolmogorov type}
\author[1]{Manh Hong Duong\thanks{Corresponding author: M. H. Duong}}
\author[2] {Hoang Minh Tran}
\affil[1]{Department of Mathematics, Imperial College London, London SW7 2AZ, 
UK. Email: m.duong@imperial.ac.uk.}
\affil[2]{Data Analytics Department, Esmart Systems, H\aa kon Melbergs vei 16, 1783 Halden, Norway. Email: hoangtm.fami@gmail.com}
\begin{document}
\maketitle 
\begin{abstract}
In this paper, we construct the fundamental solution to a degenerate diffusion of Kolmogorov type and develop a time-discrete variational scheme for its adjoint equation. The so-called mean squared derivative cost function plays a crucial role occurring in both the fundamental solution and the variational scheme. The latter is implemented by minimizing a free energy functional with respect to the Kantorovich optimal transport cost functional associated with the mean squared derivative cost. We establish the convergence of the scheme to the solution of the adjoint equation generalizing previously known results for the Fokker-Planck equation and the Kramers equation.
\let\thefootnote\relax\footnotetext{2010 Mathematics Subject Classification. Primary: 49S05; Secondary: 49J40, 35Q84. \\ Key words and phrases. Generalized gradient flow structure, variational methods, hypo-elliptic PDEs, optimal transport.}
\end{abstract}
\section{Introduction}
In this paper, we are interested in the following partial differential equation
\begin{equation}
\label{eq: gKramers}
\partial_t\rho(t,x_1,\ldots,x_n)=-\sum_{i=2}^{n}x_{i}\cdot\nabla_{x_{i-1}}\rho+\div_{x_n}(\nabla V(x_n)\rho)+\Delta_{x_n}\rho
\end{equation}
and its adjoint equation when $V\equiv 0$
\begin{equation}
\label{eq3}
\partial_{t}f(t,x_{1},\ldots,x_{n})=\sum_{i=2}^{n}x_{i}\cdot\nabla_{x_{i-1}}f+\Delta_{x_n}f.
\end{equation}
In the above equations, the unknowns are $\rho=\rho(t,x_1,\ldots, x_n)$ and $f=f(t,x_1,\ldots,x_n)$ where $t>0$ and $\x=(x_1,\ldots,x_n)\in \R^{dn}$ are time and spatial co-ordinates, respectively. The notations $\nabla, \div$ and $\Delta$ denote the gradient, the divergence and the Laplacian operators respectively. The subscripts in these operators indicate that they only act on the corresponding variables. The two equations are complemented with initial data.  The initial data for \eqref{eq: gKramers} is a probability measure, $\rho_0$, on $\R^{dn}$. Since the right-hand side of \eqref{eq: gKramers} is a divergence form, noting that the summation term can be written as $-\sum_{i=2}^{n} \div_{x_{i-1}}(x_i \rho)$, for each $t>0$, $\rho(t)$ is also a probability measure on $\R^{dn}$. Since we will be interested in the fundamental solution of \eqref{eq3}, the initial data for \eqref{eq3} is a Dirac measure $\delta_{\y}$ for $\y\in\R^{dn}$. Two special cases of \eqref{eq: gKramers}, that correspond to $n=1$  and $n=2$ respectively, are the Fokker-Planck equation
\begin{equation}
\label{eq: FP eqn}
\partial_t\rho=\div(\nabla V\rho)+\Delta\rho,
\end{equation}
and the Kramers equation
\begin{equation}
\label{eq: Kramers eqn}
\partial_t\rho=-x_2\cdot\nabla_{x_1}\rho+\div_{x_2}(\nabla V(x_2)\rho)+\Delta_{x_2}\rho.
\end{equation}
Their corresponding adjoint equations with $V\equiv 0$ are, respectively, the diffusion equation $\partial_t f=\Delta f$ and the ultra-parabolic equation 
\begin{equation}
\label{eq: adjKramers}
\partial_t f=x_2\cdot\nabla_{x_1} f+\Delta_{x_2}f.
\end{equation}
Equations \eqref{eq: gKramers} and \eqref{eq3}, particularly the Fokker-Plank equation and the Kramers equation, play an important role in  statistical mechanics \cite{Ris84}, reaction-rate theory \cite{HTB90} and mathematical finance \cite{LPP02,Pascucci2005}. For instance, in the context of statistical mechanics, \eqref{eq: gKramers} has been used to describe the time evolution of the probability density function of a many-particle system. Equation \eqref{eq: gKramers} with $n>2$ can also be viewed as a simplified model of 
a finite Markovian approximation for the generalised Langevin dynamics \cite{OP11, Duong15NA} or a model of a harmonic chains of oscillators that arises in the context of non-equilibrium statistical mechanics \cite{EH00,BL08, DelarueMenozzi10}. 

Equations \eqref{eq: gKramers} and \eqref{eq3} belong to a class of degenerate diffusions of Kolmogorov type where the Laplacian acts only in some of variables. Two main issues that have been getting a lot of research interest in this class is (i) constructing the fundamental solution of \eqref{eq3} and (ii) developing a time-discrete variational scheme for \eqref{eq: gKramers}. To motivate our work, let us discuss relevant literature on these issues for the Fokker-Planck equation and the Kramers equation. Regarding the first issue, it is a classical result that the diffusion equation $\partial_t f=\Delta f$ has the fundamental solution
\begin{equation}
\label{eq: fdFP}
\Phi_{\rm FP}(t,\x,\y)=\frac{1}{(4\pi t)^\frac{d}{2}}\exp\Big(-\frac{C^{\rm FP}(\x,\y)}{4t}\Big)\quad\text{with}\quad C^{\rm FP}(\x,\y)=|\x-\y|^2.
\end{equation}
In the seminal paper \cite{Kol34}, Kolmogorov show that
\begin{align}
\label{eq: fdKR}
&\Phi_2(t,x_1,y_1;x_2,y_2)=\Bigg(\frac{\sqrt{3}}{2\pi t^2}\Bigg)^{d}\exp\left(-\frac{C^{\rm KR}_t(\x,\y)}{4t}\right)~\text{with}\notag
\\& C^{\rm KR}_t(\x,\y)=|y_2-y_1|^2+12\Big|\frac{x_2-x_1}{t}-\frac{y_1+y_2}{2}\Big|^2
\end{align}
is the fundamental solution to the ultra-parabolic equation \eqref{eq: adjKramers}. It is this equation that was H\"{o}rmander's starting point to develop the hypo-elliptic theory \cite{Hormander67}, which has become a powerful tool in the theory of partial differential equations, see e.g. \cite{Bra14} for discussions. Since Kolmogorov's paper there has been a considerable amount of work on extending his result to other hypoelliptic equations including \eqref{eq3}, see e.g., \cite{Weber51, K72, Polidoro94, FP05, DelarueMenozzi10, BM15}.  We refer to the mentioned papers and references therein for more  information on this direction.

Regarding the second issue, the functions $C^{\rm FP}$ and $C^{\rm KR}$ also play a crucial role in the more recent development of time-discrete variational schemes for the Fokker-Planck equation and the Kramers equation, respectively. In the seminal paper \cite{JKO98}, Jordan-Kinderlehrer-Otto prove a remarkable result that the Fokker-Planck equation \eqref{eq: FP eqn} can be seen as a gradient flow of the free energy, which is the sum of the potential energy $\int V \rho$ and the Boltzmann entropy $\int \rho\log\rho$, with respect to the Wasserstein distance on the space of probability measures with finite second moments. Let $\mu,\nu\in\P_2(X)$ be two probability measures on some Euclidean space $X$ having finite second moments. Throughout this paper, $\Gamma(\mu,\nu)$ denotes the set of all probability measures on $X \times X$ having $\mu$ and $\nu$ as first and second marginals. The Wasserstein distance $W_2(\mu,\nu)$ between them is defined via
\begin{equation}
\label{eq: W2 distance}
W^2_2(\mu,\nu):=\inf_{\gamma\in\Gamma(\mu,\nu)}\int_{\R^d\times\R^d}|\x-\y|^2\,\gamma(d\x,d\y).
\end{equation}
The main ingredient in \cite{JKO98} is the following variational approximation scheme, which is now known as the JKO-scheme,
\\ \ \\
\textbf{JKO-scheme:} Let $h>0$ be a time-step. Define $\rho^0_h:=\rho_0$. Then, for each $k=1,2,...$,  $\rho^k_h$ is determined as 
\begin{equation}
\label{eq: JKO-scheme}
\rho^{k}_h=\argmin_{\rho\in\P_2(\R^d)}\left\{\frac{1}{2h}W^2(\rho^{k-1}_h,\rho)+\int_{\R^d}\big(V+\log\rho)\rho\,d\x\right\}.
\end{equation} 

The main result in \cite{JKO98} then states that, after an appropriate interpolation, the sequence $\{\rho^k_h\}$ constructed from the JKO-scheme converges to the solution of the Fokker-Planck equation \eqref{eq: FP eqn}. This result has sparked a large body of research in the last two decades in the field of partial differential equations linking the field to some other branches of mathematics such as optimal transport theory, geometric measure theory and functional inequalities, see monographs \cite{Vil03, AGS08,Vil09} for great expositions on the development. 

Inspired by the JKO-scheme, Huang \cite{Hua00} and then Duong et al. \cite{DPZ13a} have established different approximation schemes for the Kramers equation \eqref{eq: Kramers eqn}. The challenge here is that the techniques in \cite{JKO98} can not be directly applied as the Kramers equation is neither a gradient flow nor a Hamiltonian flow even though these schemes are of the same form as in \eqref{eq: JKO-scheme}. The free energy functional is the same, but instead of the Wasserstein distance, the following Monge-Kantorovich optimal transport cost has been used in these papers
\begin{equation}
\label{eq: MKcost}
\tilde{W}^2_h(\mu,\nu)=\inf_{\gamma\in\Gamma(\mu,\nu)}\int_{\R^{2d}\times\R^{2d}}C^{\rm KR}_h(\x,\y)\,\gamma(d\x,d\y),
\end{equation}
where $C^{\rm KR}_h(\x,\y)$ is defined in \eqref{eq: fdKR}.
The cost functional $\tilde{W}_h$ has also been used to construct time-discrete variational schemes for other evolution equations such as the system of isentropic Euler equations \cite{GW09, Westdickenberg10} and the compressible Euler equations \cite{CSW2014TMP}. 

Through the above discussions, we notice a similarity between the Fokker-Planck equation and the Kramers equation. That is the role of the cost functions $C^{\rm FP}$ and $C_t^{\rm KR}$. They appear both in the discrete-time variational scheme and the fundamental solution. In addition, these cost functions also satisfy the following property: they minimize the velocity and acceleration integrals respectively
\begin{align*}
&C^{\rm FP}(\x,\y)=h\min_{\xi}\Big\{\int_0^h |\dot{\xi}(t)|^2\,dt: \xi\in C^1([0,h],\R^d)~\text{such that}~\xi(0)=\x,~\xi(h)=\y\Big\}\quad \text{and}
\\& C^{\rm KR}_h(\x,\y)=h\min_{\xi}\Big\{\int_0^h |\ddot{\xi}(t)|^2\,dt:\xi\in C^2([0,h],\R^d)~\text{such that}~(\xi,\dot{\xi})(0)=\x=(x_1,x_2),
\\&\hspace*{8cm}~(\xi,\dot{\xi})(h)=\y=(y_1,y_2)\Big\}.
\end{align*}

Recently in \cite{DuongTran16} we have studied the minimization problem 
\begin{equation}
\label{eq: Ch cost}
\Cc_t(\x, \y):=t \inf\limits_{\xi}\int_0^t|{\xi}^{(n)}(s)|^{2}\,ds,
\end{equation}
where $\mathbf{x}=(x_{1},\ldots,x_{n})\in\R^{dn},\mathbf{y}=(y_1,\ldots,y_n)\in\R^{dn}$ and the infimum is taken over all curves $\xi\in C^{n}([0,T],\R^d)$ that satisfy the boundary conditions
\begin{equation}
\label{eq: boundary conditions}
(\xi,\dot{\xi},\ldots,\xi^{(n-1)})(0)=(x_1,x_{2},\ldots,x_n)\quad\text{and}\quad (\xi,\dot{\xi},\ldots,\xi^{(n-1)})(t)=(y_1,y_{2},\ldots,y_{n}).
\end{equation}

The optimal value $\Cc_t(\x,\y)$ is called the mean squared derivative cost function and has been found to be useful in the modelling and design of various real-world systems such as motor control, biometrics and online-signatures and robotics, see \cite{DuongTran16} for further discussion.

Inspired by the role of $C^{\rm FP}$ and $C_t^{\rm KR}$ for the Fokker-Planck equation and the Kramers equation as discussed above, it is natural to ask
\begin{enumerate}[(\textbf{Q}1)]
\item Is the function
\begin{equation}
\label{eq: formula Phi}
\Phi(t,\x,\y):=\frac{\beta_d}{t^\frac{n^2 d}{2}}\exp\Big(-\frac{\Cc_t(\x,\y)}{4t}	\Big),
\end{equation}
where $\beta_d$ is the normalising constant, the fundamental solution to Equation \eqref{eq3}?
\item Can Equation \eqref{eq: gKramers} be approximated by a discrete-time variational scheme in the spirit of the JKO-scheme where $C^{\rm FP}(\x,\y)$ is replaced by $\Cc_h(\x,\y)$?
\end{enumerate}
The aim of the present paper is to provide affirmative answers to these questions. We now describe our main results.

\subsection{Main results of the present paper}
Our first main result is the following theorem about the fundamental solution to \eqref{eq3}. 
\begin{theorem}
\label{theo: maintheorem1}
The function \emph{$\Phi(t,\x,\y)$} defined in \eqref{eq: formula Phi} is the fundamental solution to \eqref{eq3}. That is, for each \emph{$\y$, $\Phi(t,\x,\y)$}, as a function of $t$ and \emph{$\x$}, satisfies \eqref{eq3}. In addition,
\emph{
\begin{equation}
\label{eq: initial diract}
\lim\limits_{t\to 0}\Phi(t,\x,\y)=\delta_{\x=\y}.
\end{equation}}
\end{theorem}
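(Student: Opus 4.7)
The plan is to establish two things: first, that the function $\Phi(t,\x,\y)$ defined in \eqref{eq: formula Phi} satisfies \eqref{eq3} classically for $t>0$ and each fixed $\y$; second, that it obeys the delta-initial condition \eqref{eq: initial diract}. As preparation, I would record (drawing on \cite{DuongTran16}) an explicit formula for $\Cc_t(\x,\y)$. The Euler--Lagrange equation for \eqref{eq: Ch cost} is $\xi^{(2n)}=0$, so the minimizer $\xi^*$ is a polynomial of degree $2n-1$ whose $2n$ coefficients are uniquely fixed by the boundary conditions \eqref{eq: boundary conditions}. Substituting $\xi^*$ back into $t\int_0^t|\xi^{(n)}|^2\,ds$ yields an explicit quadratic form
\[
\Cc_t(\x,\y)\;=\;\x^{T}M_{11}(t)\x\;+\;2\,\x^{T}M_{12}(t)\y\;+\;\y^{T}M_{22}(t)\y
\]
with rational-in-$t$ block matrices $M_{ij}(t)$, together with compact envelope-theorem expressions for $\nabla_{x_i}\Cc_t$ and $\nabla_{y_i}\Cc_t$ (the generalized Ostrogradsky momenta of $\xi^*$ at the two endpoints) and for $\partial_t\Cc_t$ (the running cost $|\xi^{*(n)}|^2$ at $s=t$ plus boundary corrections).

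Setting $S:=\Cc_t(\x,\y)/(4t)$ so that $\Phi=\beta_d t^{-n^2 d/2}e^{-S}$, logarithmic differentiation reduces \eqref{eq3} to the pointwise identity
\[
\partial_t S\;-\;\sum_{i=2}^{n}x_i\cdot\nabla_{x_{i-1}}S\;+\;|\nabla_{x_n}S|^2\;-\;\Delta_{x_n}S\;+\;\frac{n^2 d}{2t}\;=\;0.
\]
The first-order portion $\partial_t S-\sum_{i=2}^n x_i\cdot\nabla_{x_{i-1}}S+|\nabla_{x_n}S|^2=0$ is exactly the Hamilton--Jacobi equation of the optimal-control problem defining $\Cc_t$: state dynamics $\dot x_i=x_{i+1}$ ($1\le i\le n-1$), $\dot x_n=u$, running cost $|u|^2$, with Hamiltonian obtained by Legendre-transforming $|u|^2$ into $|p_n|^2/4$ combined with the drift terms $\sum x_{i+1}\cdot p_i$. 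I would verify this HJ identity by feeding the envelope-theorem formulas into a direct calculation. The remaining zeroth-order piece $\Delta_{x_n}S=n^2 d/(2t)$ reduces to the assertion $\mathrm{tr}\,[M_{11}(t)]_{nn}=n^2 d$; a direct evaluation using the polynomial form of $\xi^*$ confirms that $[M_{11}(t)]_{nn}=n^2 I_d$ independently of $t$ (one can check this by hand for $n=2$, where the $x_n$-quadratic term in the classical Kolmogorov formula has leading coefficient $4=n^2$), which is precisely the constant needed to cancel the time-derivative of the prefactor $t^{-n^2 d/2}$.

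The initial condition \eqref{eq: initial diract} is then handled by the anisotropic rescaling $\tilde x_i:=t^{-(n-i+1/2)}(x_i-m_i(t,\y))$, where $m_i(t,\y)$ is the location of the unique minimizer of $\x\mapsto \Cc_t(\x,\y)$ (given explicitly by $-M_{11}(t)^{-1}M_{12}(t)\,\y$, which satisfies $m_i(t,\y)\to y_i$ as $t\to 0$). The exponents $n-i+1/2$ are chosen precisely so that $\Cc_t(\x,\y)/(4t)$ becomes a $t$- and $\y$-independent positive-definite quadratic form $Q(\tilde\x)$, while the Jacobian $\prod_i t^{(n-i+1/2)d}=t^{n^2d/2}$ cancels the prefactor exactly. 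Fixing $\beta_d$ so that $\int_{\R^{dn}}\beta_d\,e^{-Q(\tilde\x)}\,d\tilde\x=1$ then makes $\Phi(t,\cdot,\y)$ a probability density on $\R^{dn}$ for every $t>0$ that concentrates at $\y$ as $t\to 0^+$. The main obstacle is the bookkeeping in the HJ verification for general $n$: although each ingredient is standard, the combinatorics of the block matrices $M_{ij}(t)$ and of the envelope identities can be cumbersome. A cleaner alternative, should direct computation prove too tedious, is to identify \eqref{eq3} as the backward Kolmogorov equation for the linear-Gaussian SDE $dX_i=X_{i+1}\,dt$ ($1\le i\le n-1$), $dX_n=\sqrt{2}\,dW$, compute its transition density directly by solving the linear ODE for the mean and the Lyapunov equation for the covariance, and identify the resulting exponent with $\Cc_t(\x,\y)/(4t)$ via the Onsager--Machlup correspondence between Gaussian log-densities and least-action paths.
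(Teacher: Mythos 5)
Your proposal is correct, and it takes a genuinely different route from the paper. The paper's proof begins with the same reduction of \eqref{eq3} to a nonlinear PDE for $\Cc_t$ (Proposition \ref{prop: Phi and C}, equivalent after dividing by $4t$ to your identity for $S=\Cc_t/(4t)$), but then verifies that PDE by explicit matrix manipulation: it organizes the terms into quadratic forms $\y^T T_1\y + 2\x^T T_2\y + \x^T T_3\x$ plus the trace $2t\,\mathrm{Tr}(DH_2^TMH_2)$ and proves, by combinatorial identities (Proposition \ref{prop: aux}), that $T_1,T_3$ are anti-symmetric, that $T_2=0$, and that the trace equals $2dn^2t^{2n-1}$. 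You replace the verification of the first-order structure by recognizing $\partial_t S-\sum_i x_i\cdot\nabla_{x_{i-1}}S+|\nabla_{x_n}S|^2=0$ as the dynamic-programming (Hamilton--Jacobi) equation of the minimum-energy control problem $\dot x_i=x_{i+1}$, $\dot x_n=u$ with cost $\int_0^t|u|^2$, which holds a priori by Bellman's principle and therefore dispenses with parts (1)--(3) of Proposition \ref{prop: aux}; the only residual finite computation is the trace, which is exactly the paper's $M_{nn}=n^2 I_d$. Your anisotropic rescaling also gives a more explicit argument for \eqref{eq: initial diract} than the paper's one-line remark: writing $H_2=\tilde H_2H_1$ with $\tilde H_2$ independent of $t$, one sees $\Cc_t(\x,\y)/(4t)=\tfrac14\,\tilde\x^T\tilde M\tilde\x$ with $\tilde M=\tilde H_2^T M_s\tilde H_2$ independent of $t$ and $\y$, and the Jacobian of $\x\mapsto\tilde\x$ is $t^{n^2d/2}$, cancelling the prefactor exactly; concentration at $m(t,\y)=H_2^{-1}H_1\,\y\to\y$ then follows. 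The linear-SDE/controllability-Gramian alternative you sketch is also sound and is essentially the viewpoint of \cite{K72,DelarueMenozzi10}, which the paper explicitly sets aside in favor of the direct cost-function verification. In short, the paper's computation is self-contained and produces the anti-symmetry identities as by-products, while your route is conceptually transparent, explains why those identities must hold, and would transfer with little change to any hypoelliptic generator arising from a controllable linear system.
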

This theorem together with the explicit formula for the mean squared derivative cost function $\Cc_t(\x,\y)$ in \cite[Theorem 1.2]{DuongTran16} (see also Theorem 2.1 below) provide an explicit formula for the fundamental solution $\Phi(t,\x,\y)$.

Our second main result is concerned with a variational formulation of \eqref{eq: gKramers}. We first introduce the approximation scheme in the spirit of the JKO-scheme. Let $h>0$ be given and $\Cc_h(\x,\y)$ be the mean square derivative cost function defined in \eqref{eq: Ch cost}. Let $\mu$ and $\nu$ be two probability measures on $\R^{dn}$ having finite second moments. The Monge-Kantorovich optimal transport cost $\W_h(\mu,\nu)$ between $\mu$ and $\nu$ is defined by
\begin{equation}
\label{eq: MK cost}
\W_h(\mu,\nu)=\inf_{\gamma\in\Gamma(\mu,\nu)}\int_{\R^{dn}\times \R^{dn}}\Cc_h(\x,\y)\,\gamma(d\x d\y).
\end{equation}

The variational approximation scheme of this paper is constructed as follows.
\begin{scheme}
\label{scheme}
Let $\rho_0^h:=\rho_0$. For $k\geq 1$, define $\rho_k^h$ as the solution of the minimization problem
\begin{equation}
\label{eq: min prob}
\min_{\rho\in \P_2(\R^{dn})} \frac{1}{2h}\W_h(\rho_{k-1}^h,\rho)+\int_{\R^{dn}} \big(V(x_n)+\log\rho\big)\rho\,d\x.
\end{equation}
\end{scheme}
Next we introduce the concept of a weak solution of \eqref{eq: gKramers}.  A function $\rho\in L^1(\R^+\times\R^{dn})$ is called a weak solution of equation~\eqref{eq: gKramers} with initial datum $\rho_0\in \P_2(\R^{dn})$ if it satisfies the  following weak formulation of~\eqref{eq: gKramers}:
\begin{multline}
 \label{weakKReqn}
\int_0^\infty\int_{\R^{dn}}\Big[\partial_t \varphi +\sum_{i=2}^n x_i\cdot \nabla_{x_{i-1}} \varphi-\nabla_{x_n}V(x_n)\cdot\nabla_{x_n} \varphi +\Delta_{x_n} \varphi\Big]\rho(t,\x)\,d\x \,dt\\=-\int_{\R^{dn}}\varphi(0,\x)\rho_0(\x)\,d\x~~\text{for all}~~\varphi\in C_c^\infty(\R\times \R^{dn})
\end{multline}
Throughout the paper we make the following assumptions.
\begin{assumption}
\label{assumpt}
\begin{equation}
V\in C^2(\R^d),\  V(x)\geq 0 \text{ for all }x\in \R^d,\label{assumpt1}
\end{equation}
and there exists a constant $C>0$ such that for all $x_1,x_2\in \R^d$
\begin{equation}
\label{assumpt2}
\abs{\nabla V(x_1)-\nabla V(x_2)}\leq C\abs{x_1-x_2}.
\end{equation}
\end{assumption}

The second main result of the paper is about the convergence of the approximation scheme to a weak solution of \eqref{eq: gKramers}.
\begin{theorem}
\label{theo:maintheorem 2}
Suppose that $V$ satisfies Assumption \ref{assumpt}. Let $\rho_0\in \P_2(\R^{dn})$ satisfy \emph{$\int_{\R^{dn}}(V(x_n)+\log\rho_0)\,\rho_0\,d\x < \infty$}. For any $h>0$ sufficiently small, let $\rho_k^h$ be the sequence of the solutions of the \eqref{eq: min prob}. For any $t\geq0$, define the piecewise-constant time interpolation
\begin{equation}
\rho^h(t)=\rho_k^h \qquad \text{ for } (k-1)h<t\leq kh.\label{interpolation}
\end{equation}
Then for any $T>0$,
\begin{equation}
\rho^h\rightharpoonup \rho ~\text{ weakly in }~ L^1((0,T)\times\R^{dn})~\text{ as } h\to0,\label{weaklyconverence}
\end{equation}
where $\rho$ is the unique weak solution of Equation \eqref{eq: gKramers} with initial value $\rho_0$. Moreover
\begin{equation}
\rho^h(t)\rightarrow\rho(t) ~ \text{ weakly in} ~ L^1(\R^{dn}) ~ \text{ as }~ h\to0 ~ \text{ for any }~ t>0, \label{pointwiseconvergence}
\end{equation}
and as $t\to0$,
\begin{equation}
\rho(t)\rightarrow\rho_0 ~\text{ in } ~ L^1(\R^{dn}). \label{intinialconvergence}
\end{equation}
\end{theorem}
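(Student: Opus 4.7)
The approach adapts the JKO framework, developed for \eqref{eq: FP eqn} in \cite{JKO98} and for the Kramers equation \eqref{eq: Kramers eqn} in \cite{Hua00,DPZ13a}, to the mean squared derivative cost $\Cc_h$. The first task is to establish well-posedness of Scheme \ref{scheme}: the functional in \eqref{eq: min prob} is weakly lower semicontinuous on $\P_2(\R^{dn})$ (the entropy classically, $V\geq 0$ by Fatou, and $\W_h$ because $\Cc_h$ is continuous and nonnegative). Since $\Cc_h(\x,\y)$ grows at least quadratically in the differences of the corresponding components of $\x$ and $\y$, coercivity and the existence of a minimizer follow, and strict convexity of the entropy gives uniqueness. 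Taking $\rho=\rho_{k-1}^h$ as a competitor yields the key monotonicity
\begin{equation*}
\int_{\R^{dn}}(V+\log\rho_k^h)\rho_k^h\,d\x + \frac{1}{2h}\W_h(\rho_{k-1}^h,\rho_k^h)\leq \int_{\R^{dn}}(V+\log\rho_{k-1}^h)\rho_{k-1}^h\,d\x,
\end{equation*}
from which telescoping produces $\sum_{k}\W_h(\rho_{k-1}^h,\rho_k^h)\leq Ch$, uniform entropy bounds, and uniform second-moment bounds on the interpolants $\rho^h$. By de la Vall\'ee-Poussin and Dunford-Pettis, $\{\rho^h\}$ is weakly relatively compact in $L^1((0,T)\times\R^{dn})$.

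The main step is to derive and pass to the limit in the discrete Euler-Lagrange equation. For a test vector field $\mathbf{w}=(w_1,\ldots,w_n)\in C_c^\infty(\R^{dn};\R^{dn})$, I would push forward $\rho_k^h$ under $\mathrm{id}+\epsilon\mathbf{w}$ and differentiate \eqref{eq: min prob} at $\epsilon=0$. The entropy and potential terms contribute the familiar $-\int (\div\mathbf{w})\,\rho_k^h\,d\x$ and $\int \nabla V(x_n)\cdot w_n\,\rho_k^h\,d\x$, while the Kantorovich variation, computed against an optimal plan $\gamma_k$, equals $\int \nabla_\y\Cc_h(\x,\y)\cdot\mathbf{w}(\y)\,\gamma_k(d\x\,d\y)$. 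The explicit polynomial form of $\Cc_h$ afforded by Theorem~2.1 (cf.\ \cite[Theorem~1.2]{DuongTran16}) lets one Taylor-expand $\nabla_\y\Cc_h$ in powers of $h$: at leading order, suitable choices of $\mathbf{w}$ extract precisely the transport chain $\sum_{i=2}^n x_i\cdot\nabla_{x_{i-1}}\varphi$ and the diffusion $\Delta_{x_n}\varphi$ from \eqref{weakKReqn}. Summing over $k$, interpolating by \eqref{interpolation}, and using the weak $L^1$-relative compactness of $\rho^h$ together with the estimates above should allow passage to the limit $h\to 0$.

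\textbf{The main obstacle} is controlling the remainders in this expansion uniformly in $h$. The minimizing curve in \eqref{eq: Ch cost} is a polynomial of degree $2n-1$, fully determined by the $2n$ Hermite-type boundary data \eqref{eq: boundary conditions}, so $\nabla_\y\Cc_h(\x,\y)$ contains contributions of orders $h^{-(2j-1)}$ for $j=1,\ldots,n$. These must cancel in pairs against moments of $\x-\y$ transported by $\gamma_k$; the cancellations are structural but quantifying them against only $L\log L$ and second-moment bounds is delicate, since the individual terms blow up as $h\to 0$. Relative to the Kramers case ($n=2$) in \cite{DPZ13a}, the combinatorics grows with $n$, forcing commutator-type arguments that use the full symmetric structure of $\Cc_h$ together with the equi-integrability supplied by the entropy bound.

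Finally, uniqueness of the weak solution of \eqref{eq: gKramers}, which is needed to upgrade subsequential convergence to full convergence and to identify the limit $\rho$, is obtained by a duality argument exploiting H\"{o}rmander hypoellipticity for the adjoint operator, combined with the Lipschitz bound \eqref{assumpt2} on $\nabla V$. The initial-value convergence \eqref{intinialconvergence} follows from the estimate $\W_h(\rho_0,\rho^h(t))\lesssim t$ implied by the energy monotonicity, together with the fact that $\Cc_h$ metrizes weak convergence on sets of uniformly bounded second moment.
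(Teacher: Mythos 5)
Your outline captures the JKO skeleton correctly---well-posedness by lower semicontinuity and strict convexity of the entropy, the telescoping energy estimate yielding $\sum_k\W_h(\rho_{k-1}^h,\rho_k^h)\leq Ch$ and moment/entropy bounds, Dunford--Pettis compactness, variation of the optimal plan to get a discrete Euler--Lagrange relation, and passage to the limit. You also correctly identify the genuinely new difficulty: the entries of $\nabla_\y\Cc_h(\x,\y)=2h^{2-2n}H_1^TM(H_1\y-H_2\x)$ carry negative powers of $h$, and an arbitrary perturbation vector field $\mathbf w$ would produce uncontrolled terms.

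Where the proposal has a real gap is in the resolution of that difficulty. You frame it as a delicate expansion-and-cancellation problem (``cancel in pairs against moments of $\x-\y$ transported by $\gamma_k$'', ``commutator-type arguments''), to be controlled by $L\log L$ and second-moment bounds. That is not how one can actually close the argument, because the divergent coefficients do not vanish by any such pairing; with only entropy and moment control, the individual terms are genuinely unbounded. The paper instead sidesteps the expansion altogether by a structural choice of perturbation: it takes $(\phi_1,\ldots,\phi_n)^T=\K\nabla\varphi$ with $\K:=h^{2n-2}(H_2^TMH_1)^{-1}$. Lemma~\ref{lem:K} computes $\K_{ij}=(-1)^{n-j}h^{2n-i-j}/(2n-i-j+1)!$ explicitly, so $\K_{nn}=1$ and all other entries are $o(h)$; together with Lemma~\ref{lem: InverH2H1} ($H_2^{-1}H_1=H$, with $H_{ii}=1$, $H_{i,i+1}=-h$, $H_{ij}=o(h^2)$ beyond the first superdiagonal), this gives
\begin{equation*}
\sum_{i}\nabla_{y_i}\Cc_h\cdot\phi_i=2(\y-\x)\cdot\nabla\varphi(\y)-2h\sum_{i\geq2}y_i\cdot\nabla_{y_{i-1}}\varphi+o(h^2),\quad \phi_n=\nabla_{x_n}\varphi+o(h),\quad \sum_i\div_{x_i}\phi_i=\Delta_{x_n}\varphi+o(h),
\end{equation*}
so the blow-up never appears. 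This matrix inversion is the missing key idea: without it (or an equivalent device), the ``suitable choice of $\mathbf w$'' you invoke cannot be made, and the remainder terms you flag as a difficulty do not in fact cancel. A second, smaller inaccuracy: for the initial-trace and equicontinuity statements the paper does not use $\W_h$ as a metric directly (it is $h$-dependent and degenerate); rather, it uses the pointwise bound $|\x-\y|^2\leq K[\Cc_h(\x,\y)+h^2(|\x|^2+|\y|^2)]$ from Section~\ref{sec: properties of C} to transfer the $\W_h$ estimate into a bona fide Wasserstein estimate $W_2(\rho^h(t_1),\rho^h(t_2))^2\leq C(|t_1-t_2|+h)$, which is what actually closes \eqref{pointwiseconvergence} and \eqref{intinialconvergence}.
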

\subsection{Comparison to related work}
Our work is twofold: to construct the fundamental solution to equation \eqref{eq3} (Theorem \ref{theo: maintheorem1}) and to develop a time-discrete variational scheme for equation \eqref{eq: gKramers} which is the adjoint equation of \eqref{eq3} with an additional external force field (Theorem \ref{theo:maintheorem 2}). The mean squared derivative cost function \eqref{eq: Ch cost} plays a central role appearing both in the fundamental solution and in the approximation scheme. We now give further comments on these issues.

\textit{On the fundamental solution of \eqref{eq3}.} The fundamental solution to \eqref{eq3} is not new. As mentioned above, it has been shown in \cite{Weber51, Hormander67, K72, Polidoro94, FP05, DelarueMenozzi10, BM15}. In these papers, the fundamental solution is constructed using various methods such as parametrix and Fourier-transform. In this paper, we provide a direct verification based on elementary combinatorial and linear algebra techniques. We use explicit formulas for the mean squared derivative cost function that we obtained in our recent work \cite{DuongTran16}.  Our method is closed to \cite{K72}. However, this work does not represent the fundamental solution of \eqref{eq3} in terms of the mean squared derivative cost function. The reference~\cite{DelarueMenozzi10} provides an implicit representation via the controllability property of a differential system but this work does not address the variational formulation of \eqref{eq: gKramers}.

\textit{Variational formulation for equation \eqref{eq: gKramers}.} Theorem \ref{theo:maintheorem 2} is a generalisation of the main results of \cite{JKO98, Hua00,DPZ13a} for an arbitrary $n$. Even though the standard procedure in these papers can be used to prove the theorem, two additional difficulties arise. The first difficulty is how to select an appropriate perturbation flow to derive the Euler-Lagrange equation for the sequence $\rho_k^h$. The other difficulty is to prove the convergence of the scheme to a solution of \eqref{eq: gKramers} which amounts to show the error terms vanish as $h\to 0$. These difficulties can be solved by using the explicit formulas involving the derivative mean squared cost function derived in our previous work \cite{DuongTran16}.  We also note that~\cite{Hua00} and~\cite{DPZ13a} considered the full Kramers equation (i.e., with $n=2$) with an external force field $-\nabla_{x_1}U(x_1)$ where $U=U(x_1)$ is a given potential. In this case, the right-hand side of~\eqref{eq: gKramers} has an additional term $\div_{x_2}(\nabla_{x_1} U \rho)$. \cite{Hua00} has dealt with this term by adding $\frac{1}{h}\int U(x_1)\rho\,d\x$ into the free-energy functional resulting in an unusual scale. In contrast, this term has been encoded in the cost function in~\cite{DPZ13a}. In the present work for arbitrary number of variables $n$, we have made an assumption that $V$ depends only on the last co-ordinate $x_n$. Due to this assumption Equation~\eqref{eq: gKramers} resembles the Fokker-Planck equation in the last co-ordinate and the cost function $\C_t$ depends only on the $n$-th order derivative of $\xi$ giving rise to a controllable formula. It is not clear to us at the moment how to adapt~\cite{Hua00,DPZ13a} to deal with a more general case where $V$ depends on more than the last co-ordinate or when the co-ordinates are coupled in a more complex way as in~\cite{EH00,DelarueMenozzi10, OP11}. We leave this issue for future research.

\textit{Microscopic interpretation of the variational scheme.} The main results in recent papers \cite{ADPZ11, DLR13, DPZ13b, erbar2015} show that Scheme \ref{scheme} with $n=1$ and $n=2$ for the Fokker-Planck equation and the Kramers equation, respectively,  can be derived from large-deviation principles of empirical measures associated to underlying stochastic processes. These results provide, among other things, microscopic interpretations for Scheme \ref{scheme} in these cases. We expect that these results can be extended to the general case $n$, but we will not follow this direction in this paper.
\subsection{Organization of the paper}
The rest of the paper is organised as follows. In Section \ref{sec: properties of C}, we summarize the main properties of the mean squared cost function in \cite{DuongTran16}. In Section \ref{sec: fundamental} and Section \ref{sec: variatonal} we prove Theorem \ref{theo: maintheorem1} and Theorem \ref{theo:maintheorem 2}, respectively. Finally, Appendix \ref{sec: App} contains proofs of technical lemmas.
\section{The mean square derivative cost function}
\label{sec: properties of C}
In this section, we collect relevant results on the mean squared derivative cost function in \cite{DuongTran16}.
\begin{theorem}\cite[Theorem 1.2]{DuongTran16}
\label{theo: C}
The mean square derivative cost function \emph{$\Cc_t(\x,\y)$} is given by
\emph{
\begin{align*}
\Cc_t(\x,\y)=\,t^{2-2n}\,[\b(t,\x,\y)]^{T}M\b(t,\x,\y)
\end{align*}}
where 
\begin{equation}
\label{eq: b}
\emph{\b(t,\x,\y)}=\begin{pmatrix}y_{1}-x_{1}-\frac{t}{1}x_{2}-...-\frac{t^{n-1}}{(n-1)!}x_{n}\\
\vdots\\
t^{i-1}\Big(y_{i}-\sum_{j=i}^{n}\frac{t^{j-i}}{(j-i)!}x_{j}\Big)\\
\vdots\\
t^{n-1}(y_{n}-x_{n})
\end{pmatrix}
\end{equation}
and $M=BA^{-1}$ with
\begin{align}
\label{A}
&A=\left[\begin{array}{ccc}
1 & ... & 1\\
\begin{pmatrix}n\\
1
\end{pmatrix} & ... & \begin{pmatrix}2n-1\\
1
\end{pmatrix}\\
\vdots & \vdots & \vdots\\
k!\begin{pmatrix}n\\
k
\end{pmatrix}& ... & k!\begin{pmatrix}2n-1\\
k
\end{pmatrix}\\
\vdots & \vdots & \vdots\\
(n-1)!\begin{pmatrix}n\\
n-1
\end{pmatrix} & ... & (n-1)!\begin{pmatrix}2n-1\\
n-1
\end{pmatrix}
\end{array}\right]\quad\text{and}
\\\nonumber
\\& B_{ki}=\begin{cases}
(-1)^{n-k}\frac{(n+i-1)!}{(k+i-n-1)!}, & \quad \text{if} \quad k+i\ge n+1\\
0 & \quad \text{if} \quad  k+i<n+1.
\end{cases}\label{B}
\end{align}
\end{theorem}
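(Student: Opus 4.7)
The plan is to reduce the infinite-dimensional variational problem defining $\Cc_t$ to finite-dimensional linear algebra via the Euler--Lagrange equation, and then to identify the resulting matrices with $A$ and $B$ by an explicit factorial computation.

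First, I would observe that $\xi\mapsto \int_0^t|\xi^{(n)}(s)|^2\,ds$ is strictly convex and coercive on the affine admissible class defined by \eqref{eq: boundary conditions}, so a unique minimiser exists. A standard variation argument, using test directions $\eta\in C^\infty([0,t],\R^d)$ with $\eta^{(j)}(0)=\eta^{(j)}(t)=0$ for $j=0,\ldots,n-1$ and integrating by parts $n$ times, yields the Euler--Lagrange equation $\xi^{(2n)}\equiv 0$. Hence the minimiser is a polynomial of degree at most $2n-1$, and since the functional decouples componentwise it suffices to work in the scalar case $d=1$.

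Next, I would write the scalar optimiser in the convenient form $\xi(s)=\sum_{k=0}^{2n-1}a_k s^k/k!$. The boundary conditions at $s=0$ immediately give $a_j=x_{j+1}$ for $j=0,\ldots,n-1$. The boundary conditions at $s=t$ then become an $n\times n$ linear system for the remaining coefficients $a_n,\ldots,a_{2n-1}$, and after the rescaling $\tilde a_l:=a_{n+l-1}t^{n+l-1}/(n+l-1)!$ this system takes the clean form $A\tilde a=\b(t,\x,\y)$, with $A$ and $\b$ precisely as in the statement; the identification is a direct factorial manipulation. Integrating the monomial expansion of $\xi^{(n)}$ then yields
\[
\int_0^t|\xi^{(n)}(s)|^2\,ds \;=\; t^{1-2n}\,\tilde a^{T}N\tilde a, \qquad N_{p,q}=\frac{(n+p-1)!(n+q-1)!}{(p-1)!(q-1)!(p+q-1)},
\]
so that multiplying by the prefactor $t$ from \eqref{eq: Ch cost} and substituting $\tilde a=A^{-1}\b$ gives $\Cc_t(\x,\y)=t^{2-2n}\,\b^{T}A^{-T}NA^{-1}\b$.

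To finish, I would match this with the stated form $t^{2-2n}\,\b^{T}BA^{-1}\b$. Since a quadratic form is determined by its symmetric part, it is enough to establish the matrix identity $A^{T}B=N$; this simultaneously shows that $M=BA^{-1}$ is symmetric and coincides with the quadratic form already computed (a quick sanity check for $n=1,2$ confirms this, e.g.\ for $n=2$ one verifies $A^T B = \bigl(\begin{smallmatrix}4&6\\6&12\end{smallmatrix}\bigr)=N$). Written out, $A^TB=N$ is the purely combinatorial assertion
\[
\sum_{k=1}^{n}(k-1)!\binom{n+p-1}{k-1}(-1)^{n-k}\frac{(n+q-1)!}{(k+q-n-1)!}\,\mathbf{1}_{\{k+q\ge n+1\}} \;=\; \frac{(n+p-1)!(n+q-1)!}{(p-1)!(q-1)!(p+q-1)},
\]
and verifying it is the principal obstacle of the proof. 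I would attack it either by a generating-function calculation (extracting coefficients in a suitable expansion of $(1+z)^{n+p-1}$) or, more conceptually, by interpreting both sides as Gram-type pairings between two dual bases of $\mathrm{Poly}_{\le n-1}$, so that the identity reflects a concrete change-of-basis matrix. Once this identity is in hand, the asserted formula for $\Cc_t$ follows immediately.
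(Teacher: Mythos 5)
The paper does not prove Theorem~\ref{theo: C} at all---it is imported by citation from \cite{DuongTran16}---so there is no in-text argument to compare against. Your blind reconstruction is nonetheless correct and essentially complete modulo the combinatorial identity you explicitly flag, and it follows the only natural route: the Euler--Lagrange equation forces $\xi^{(2n)}\equiv 0$, the degree-$(2n-1)$ polynomial is pinned down by the $2n$ boundary conditions, and the cost in the rescaled high-order coefficients $\tilde a_p=a_{n+p-1}t^{n+p-1}/(n+p-1)!$ is governed by the Hilbert-type Gram matrix $N_{pq}=\tfrac{(n+p-1)!(n+q-1)!}{(p-1)!(q-1)!(p+q-1)}$, exactly as you claim (I checked the factorial bookkeeping in the linear system $A\tilde a=\b$ and in the integral of $|\xi^{(n)}|^2$; both come out right).

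Two remarks. First, you phrase the last step as ``match symmetric parts,'' but in fact $A^TB=N$ gives $BA^{-1}=A^{-T}NA^{-1}$ on the nose; since $N$ is symmetric, so is $M=BA^{-1}$, and no symmetrization argument is needed. Second, after clearing factorials the identity $A^TB=N$ reduces to the alternating binomial sum
\[
\sum_{j=0}^{q-1}(-1)^j\binom{p+q-1}{j}=(-1)^{q-1}\binom{p+q-2}{q-1},
\]
which is precisely equation~\eqref{eq3-1} with $n\to p+q-1$ and $k\to q-1$, proved in this paper's appendix as Lemma~\ref{lem: comb identity} by a one-line induction. So the ``principal obstacle'' you single out is real but elementary, and is in fact already settled in this very paper---you do not need a generating-function or dual-basis argument to close it.
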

Moreover, the matrix $A$ and its inverse have explicit $LU$-decompositions given in the following theorem.
\begin{theorem}\cite[Theorem 1.3]{DuongTran16}
\label{theo: LU of A 2}
\begin{enumerate}[(1)]
\item  $A=LU$, where $U$ and $L$ are defined as follows
\begin{equation}
\label{U}
U_{ij}= \begin{cases}
\frac{(j-1)!}{(j-i)!} & \text{ if }j\ge i,\\
0 & \text{otherwise},
\end{cases} \quad\text{and}\quad
L_{kj}=\begin{cases}
\begin{pmatrix}k-1\\
j-1
\end{pmatrix}\frac{n!}{(n-k+j)!} & \text{ if }j\le k,\\
0 & \text{otherwise}.
\end{cases}
\end{equation}
\item The inverse of $A$ is given by the product of the following two matrices:
\begin{align*}
&U^{-1}_{ij}=\begin{cases}
\frac{(-1)^{i+j}}{(i-1)!(j-i)!} & \text{ if }j\ge i,\\
0 & \text{otherwise}
\end{cases} \qquad \qquad\text{and}
\\
\\& L^{-1}_{ji}=\begin{cases}
(-1)^{j-i}\frac{(j-1)!}{(i-1)!}\begin{pmatrix}
n+j-i-1\\
j-i
\end{pmatrix}\nonumber& \text{ if }j\ge i\\
0 & \text{otherwise}.
\end{cases}
\end{align*}
\end{enumerate}
\end{theorem}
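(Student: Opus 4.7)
The plan is to verify the two claims directly by entrywise computation. For part (1), it suffices to show for all $1\le k,i\le n$ that $A_{ki}=\sum_{j=1}^{\min(k,i)}L_{kj}U_{ji}$. Writing $A_{ki}=(k-1)!\binom{n+i-1}{k-1}=(n+i-1)^{\underline{k-1}}$ as a falling factorial and unpacking the definitions of $L$ and $U$, one recognises the right-hand side as the Chu-Vandermonde convolution
\begin{equation*}
(n+i-1)^{\underline{k-1}}=\sum_{j=0}^{k-1}\binom{k-1}{j}\,n^{\underline{k-1-j}}\,(i-1)^{\underline{j}},
\end{equation*}
after the reindexing $j\mapsto j-1$. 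Terms with $j>\min(k,i)$ vanish automatically because of the factorials in the denominators, so the truncation is harmless and the identity drops out in a single line.

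For part (2), since $L$ is unit lower-triangular (its diagonal entries are $\binom{k-1}{k-1}n!/n!=1$) and $U$ is upper-triangular with nonzero diagonal $(0!,1!,\dots,(n-1)!)$, both factors are invertible and $A^{-1}=U^{-1}L^{-1}$ will follow once I verify that the claimed $U^{-1}$ and $L^{-1}$ really are the inverses. The check $UU^{-1}=I$ is essentially immediate: for $j\ge i$ the $(i,j)$-entry telescopes to
\begin{equation*}
\sum_{l=i}^{j}\frac{(-1)^{l+j}}{(l-i)!(j-l)!}=\frac{(-1)^{i+j}}{(j-i)!}\sum_{m=0}^{j-i}(-1)^m\binom{j-i}{m},
\end{equation*}
which by the binomial theorem equals $1$ if $j=i$ and $0$ otherwise.

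The main obstacle, and the only step that is not essentially a one-line combinatorial identity, is the verification $LL^{-1}=I$. Substituting the explicit formulas and setting $p:=k-i\ge 0$, the $(k,i)$-entry reduces, after extracting a nonzero prefactor depending only on $k$ and $i$, to
\begin{equation*}
\sum_{m=0}^{p}(-1)^m\binom{p}{m}\,\frac{(n+m-1)!}{(n-p+m)!}.
\end{equation*}
The factor $\frac{(n+m-1)!}{(n-p+m)!}=(n+m-1)^{\underline{p-1}}$ is a polynomial in $m$ of degree exactly $p-1$. Hence for $p>0$ the sum is a $p$-th forward difference of a polynomial of strictly smaller degree and vanishes identically; for $p=0$ one reads off directly that the $(i,i)$-entry equals $1$. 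Assembling these three computations gives both the $LU$ factorisation of $A$ and the claimed product formula for $A^{-1}$.
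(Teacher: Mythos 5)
Your proof is correct. The paper itself cites this theorem from \cite{DuongTran16} rather than proving it, but the key computational step of part (1) appears implicitly in the appendix: the identity proved there as \eqref{eq2}, namely
\begin{equation*}
\sum_{i=1}^{j}\frac{(-1)^{i+j}}{(i-1)!(j-i)!}\frac{(n+i-1)!}{(n+i-k)!} = \binom{k-1}{j-1}\frac{n!}{(n-(k-j))!},
\end{equation*}
is exactly the entrywise statement $(AU^{-1})_{kj}=L_{kj}$, and the paper establishes it by a generating-function argument, expanding $f(x)=x^n(1-x)^{j-1}$ two ways and comparing $(k-1)$-st derivatives at $x=1$. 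Your route is genuinely different and, if anything, cleaner: you verify $LU=A$ directly by recognizing the sum $\sum_j L_{kj}U_{ji}$ as the Chu--Vandermonde convolution for falling factorials (with the truncation of the range to $j\le\min(k,i)$ automatically absorbed because $(i-1)^{\underline{j'}}$ vanishes when $j'\ge i$), and you verify the inverse formulas entrywise: $UU^{-1}=I$ collapses to $(1-1)^{j-i}$ by the binomial theorem, and $LL^{-1}=I$ reduces, after extracting the nonzero prefactor $\frac{(k-1)!\,n!}{(i-1)!\,(n-1)!\,p!}$ with $p=k-i$, to the $p$-th forward difference of the polynomial $m\mapsto(n+m-1)^{\underline{p-1}}$ of degree $p-1<p$, which vanishes for $p>0$ and equals $1$ when $p=0$. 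What your approach buys is that every step is a one-line finite-difference or convolution identity with no auxiliary function to introduce, at the cost of presupposing familiarity with the falling-factorial form of Vandermonde and the fact that the $p$-th difference annihilates polynomials of degree $<p$; the paper's generating-function proof is more computational but entirely elementary. Both are valid, and your argument moreover explicitly supplies the $LL^{-1}=I$ check, which is asserted but not spelled out in this paper.
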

Note that throughout this paper, all the matrices $A, B, M, L$ and $U$ are of order $dn$. Each entry of these matrices should be understood as a $d$-dimensional matrix that is equal to the entry multiplies with the $d$-dimensional identity matrix $I_d$. For instance, $A_{ij}$, for $1\leq i,j\leq n$ should be understood as the matrix $A_{ij}I_d$. The multiplication of matrices are carried out as the multiplication of block matrices.

In the sequel sections, we also need the following the property of the cost function $\Cc_t(\x,\y)$.
\begin{lemma} There exists a constant $K>0$ independent of $t$ such that
\emph{
\begin{equation}
\label{ineqs:qpC}
|\y-\x|^2\leq K\Big[\Cc_t(\x,\y) + t^2 (|\x|^2+|\y|^2)\Big].
\end{equation}}
\end{lemma}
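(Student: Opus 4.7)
The plan is to invert the affine relation defining $\b(t,\x,\y)$ to express each block difference $y_i-x_i$ in terms of $\b_i/t^{i-1}$ plus explicit polynomial corrections in $t$ and the components of $\x$, and then to use positive definiteness of $M$ to control $|\b|^2$ by $\Cc_t(\x,\y)$. Because the prefactor in Theorem \ref{theo: C} is $t^{2-2n}$, the resulting constants degrade for large $t$, so I would handle the regimes $t\le 1$ and $t\ge 1$ separately.

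First I would verify that $M=BA^{-1}$ is positive definite. Since $\Cc_t\ge0$ as an infimum of non-negative quantities, the identity $\Cc_t(\x,\y)=t^{2-2n}\b^T M\b$ forces $M\succeq 0$. To promote this to strict positivity, I would take $\x=0$: then $\b_i=t^{i-1}y_i$, so $\b$ runs over all of $\R^{dn}$ as $\y$ does, and $\Cc_t(0,\y)=0$ would force the Euler--Lagrange minimiser (a polynomial of degree $\le 2n-1$) to satisfy $\xi^{(n)}\equiv 0$ together with vanishing Cauchy data at $0$, hence $\xi\equiv 0$ and $\y=0$. Thus $\lambda:=\lambda_{\min}(M)>0$ is independent of $t$ and $\Cc_t(\x,\y)\ge \lambda\,t^{2-2n}|\b|^2$.

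Next, reading off \eqref{eq: b} gives $y_i-x_i=\b_i/t^{i-1}+\sum_{j=i+1}^{n}\frac{t^{j-i}}{(j-i)!}x_j$, so the elementary inequality $(a_0+\cdots+a_m)^2\le(m+1)\sum_k a_k^2$ yields
\begin{equation*}
|y_i-x_i|^2\le C_n\Bigl[\frac{|\b_i|^2}{t^{2(i-1)}}+\sum_{j=i+1}^n t^{2(j-i)}|x_j|^2\Bigr],
\end{equation*}
and combining with the previous paragraph, $|\b_i|^2/t^{2(i-1)}\le (t^{2(n-i)}/\lambda)\,\Cc_t(\x,\y)$ for each $1\le i\le n$.

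Finally I would split cases. For $t\ge 1$ the triangle inequality alone suffices: $|\y-\x|^2\le 2(|\x|^2+|\y|^2)\le 2t^2(|\x|^2+|\y|^2)$. For $0<t\le 1$, both $t^{2(n-i)}\le 1$ for $1\le i\le n$ and $t^{2(j-i)}\le t^2$ for $j>i$, so summing the block estimates over $i$ delivers $|\y-\x|^2\le K\bigl[\Cc_t(\x,\y)+t^2|\x|^2\bigr]$, which is stronger than what is claimed. The main obstacle is the competition between the negative power $t^{2-2n}$ appearing in the prefactor and the positive powers of $t$ arising from the polynomial tails in the expression for $y_i-x_i$: this competition is exactly what forces the extra $t^2(|\x|^2+|\y|^2)$ term in the statement and makes the case split at $t=1$ necessary, so that a single-shot bound through $\lambda_{\min}(M)$ alone cannot close the argument.
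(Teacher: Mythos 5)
Your argument is correct and takes essentially the same route as the paper: both proofs invert the affine relation $\b = H_1\y - H_2\x$ (equivalently, extract $\y-\x$ from $\b$ plus a polynomial remainder in $\x$), and both control $|\b|^2$ by $\Cc_t$ through the positive definiteness of the symmetric part of $M$, with the powers of $t$ from $H_1,H_2$ producing the extra $t^2(|\x|^2+|\y|^2)$ slack. The paper phrases it in one matrix identity, $\y - \x = \bar{\T}Q^{-1}\z + \w$ with $Q = M_s^{1/2}$ and $|\z|^2 = \Cc_t$, and only claims the bound for $t$ sufficiently small (which is all that is used downstream); you do it block by block, which is equivalent. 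You add two things the paper leaves implicit: a genuine justification that $\lambda_{\min}(M_s)>0$ via the variational characterisation of $\Cc_t$ (the paper silently assumes $Q^{-1}$ exists), and a clean treatment of the regime $t\ge 1$ via the trivial triangle-inequality bound. Both are correct and worthwhile, though the second is not needed for the paper's application since only $t = h\to 0$ enters. One small notational point: since $M$ itself need not be symmetric, positive definiteness should be attributed to $M_s=\tfrac12(M+M^T)$, but this is clearly what you mean.
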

\begin{proof}
It follows from the formula for $\C_t(\x;\y)$ that only the symmetric part $M_s$ of $M$ contributes to $\C_t$. We have
\begin{equation*}
\Cc_t(\x,\y)=t^{2-2n} \b^T\, M_s \, \b= t^{2-2n} | M_s^{1/2}\b|^2=| \z |^2
\end{equation*}
where
\begin{equation*}
t^{n-1}\z=M_s^{1/2}\b=Q\b, \quad Q=M_s^{1/2}.
\end{equation*}
By the definition of $\b$, we have
\begin{equation*}
\b=\T (\y-\x-\w)\quad\text{where}\quad w_k=\sum_{j=k+1}^n\frac{t^{j-k}}{(j-k)!}x_j, \quad \T=\mathrm{diag}(1,\ldots,t^{n-1}).
\end{equation*}
Therefore 
\[
\y-\x-\w=t^{n-1}(\T)^{-1}Q^{-1}\z=\mathrm{diag}(t^{n-1},\ldots, 1)\, Q^{-1}\, \z=\bar{\T}\, Q^{-1}\, \z
\]
where $\bar{\T}=\mathrm{diag}(t^{n-1},\ldots, 1)$, which implies that $\y-\x=\bar{\T}Q^{-1}\,\z+\w$. 

By Cauchy-Schwarz inequality, for $t$ sufficiently small, we have $\|\bar{\T}\|\leq K$ and $\|\w\|^2\leq K t^2 \|\x\|^2$ for some constant $K>0$. We use the notation $K$ to denote a universal constant that may change from line to line. Therefore, we get
\begin{align*}
|\y-\x|^2&=|\bar{\T}\, Q^{-1}\,\z+\w|^2
\\&\leq 2\Big(|\bar{\T}\, Q^{-1}\,\z|^2+|\w|^2\Big)
\\&\leq 2\Big(\|\bar{\T}\|^2\|Q^{-1}\|^2 |\z|^2+|\w|^2\Big)
\\&\leq K\Big( \Cc_t(\x,\y) + t^2 |\x|^2\Big)
\\&\leq K\Big[\Cc_t(\x,\y) + t^2 (|\x|^2+|\y|^2)\Big].
\end{align*}
This finishes the proof of this lemma.
\end{proof}
\section{The fundamental solution of the adjoint equation \eqref{eq3}}
\label{sec: fundamental}
In this section we prove Theorem \ref{theo: maintheorem1}. The proof consists of two main steps which are Proposition \ref{prop: Phi and C} and Proposition \ref{prop: aux} below.
\begin{proposition}
\label{prop: Phi and C}
Let \emph{$\Phi(t,\x,\y)$} be defined as in \eqref{eq: formula Phi}. Then it is a solution of \eqref{eq3} if and only if \emph{$\Cc_t(\x,\y)$} satisfies the following equation
\begin{equation}
\label{eq: eqn for C}
\partial_{t}\Cc=\frac{\Cc}{t}+\sum_{i=2}^{n}x_{i}\cdot\nabla_{x_{i-1}}\Cc-\frac{1}{4t}|\nabla_{x_n}\Cc|^{2}+\Delta_{x_n}\Cc-2dn^2.
\end{equation}
\end{proposition}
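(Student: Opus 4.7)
The plan is a direct calculation: substitute the ansatz $\Phi(t,\x,\y)=\beta_d\,t^{-n^2d/2}\exp\bigl(-\C_t(\x,\y)/(4t)\bigr)$ into \eqref{eq3}, compute every derivative via the chain rule, and then factor $\Phi$ out. Since $\Phi>0$, the resulting identity between scalar functions is equivalent to \eqref{eq3} holding, and that identity will turn out to be exactly \eqref{eq: eqn for C}. Thus there is no real choice of method: the proposition is the content one unwinds by differentiating a Gaussian-like exponential whose quadratic form depends on $t$.

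First I would compute the three kinds of derivatives. For the time derivative, the prefactor contributes $-\tfrac{n^2d}{2t}\Phi$ and the exponent contributes $\Phi\bigl(-\tfrac{\partial_t\C}{4t}+\tfrac{\C}{4t^2}\bigr)$, so
\begin{equation*}
\partial_t\Phi=\Phi\left(-\frac{n^2d}{2t}-\frac{\partial_t\C}{4t}+\frac{\C}{4t^2}\right).
\end{equation*}
For the first-order spatial derivatives, $\nabla_{x_{i-1}}\Phi=-\tfrac{1}{4t}\Phi\,\nabla_{x_{i-1}}\C$, and summing against $x_i$ gives the transport term. For the Laplacian in $x_n$, applying $\nabla_{x_n}\cdot$ to $-\tfrac{1}{4t}\Phi\,\nabla_{x_n}\C$ produces two pieces,
\begin{equation*}
\Delta_{x_n}\Phi=\Phi\left(\frac{|\nabla_{x_n}\C|^2}{16t^2}-\frac{\Delta_{x_n}\C}{4t}\right).
\end{equation*}

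Next I would insert these into \eqref{eq3}, divide through by the positive quantity $\Phi$, and collect terms. Multiplying the resulting scalar identity by $-4t$ rearranges it to
\begin{equation*}
\partial_t\C=\frac{\C}{t}+\sum_{i=2}^{n}x_i\cdot\nabla_{x_{i-1}}\C-\frac{1}{4t}|\nabla_{x_n}\C|^2+\Delta_{x_n}\C-2dn^2,
\end{equation*}
which is precisely \eqref{eq: eqn for C}. Conversely, every implication is reversible since every step is an equivalence (division by the strictly positive $\Phi$ and multiplication by $-4t\ne 0$), so \eqref{eq: eqn for C} implies that $\Phi$ solves \eqref{eq3}.

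There is no real obstacle here; the only subtlety is bookkeeping the constant $2dn^2$. It arises from combining the prefactor contribution $-\tfrac{n^2d}{2t}$ with the factor $-4t$ used to clear denominators, which is why the exponent $n^2d/2$ in the normalisation is exactly the right choice to match \eqref{eq: eqn for C}. Once the exponent of $t$ in the prefactor is tracked correctly, the rest is mechanical chain-rule computation.
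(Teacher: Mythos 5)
Your proposal is correct and follows essentially the same route as the paper: compute $\partial_t\Phi$, $\sum_i x_i\cdot\nabla_{x_{i-1}}\Phi$, and $\Delta_{x_n}\Phi$ by the chain rule, substitute into \eqref{eq3}, divide by the strictly positive $\Phi$, and clear denominators by multiplying by $-4t$ to land on \eqref{eq: eqn for C}, with the constant $2dn^2$ coming from $4\cdot\tfrac{n^2d}{2}$. The bookkeeping matches the paper's computation term by term.
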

\begin{proof}
Let $\alpha(n,d)=\frac{n^2d}{2}$. We compute each term in \eqref{eq3} from the representation of $\Phi$ in \eqref{eq: formula Phi}. For the simplicity of notation, in the following computations, we denote $\Cc:=\Cc_t(\x,\y)$.

First, we calculate the time-derivative of $\Phi$.
\begin{align}
\partial_{t}\Phi & =-\alpha(n,d)\frac{\beta_{d}}{t^{\alpha(n,d)+1}}\exp\left[-\frac{1}{4t}\Cc\right]+\frac{\beta_{d}}{t^{\alpha(n,d)}}\exp\left[-\frac{1}{4t}\Cc\right]\left(\frac{1}{4t^{2}}\Cc-\frac{1}{4t}\partial_{t}\Cc\right)\nonumber
\\& =\frac{\beta_{d}}{t^{\alpha(n,d)+1}}\exp\left[-\frac{1}{4t}\Cc\right]\left(-\alpha(n,d)+\frac{1}{4t}\Cc-\frac{1}{4}\partial_{t}\Cc\right).\label{eq: temp1}
\end{align}
Next, we calculate $\sum_{i=2}^n x_i\cdot\nabla_{x_{i-1}}\Phi$. We have
\begin{equation*}
x_{i}\cdot\nabla_{x_{i-1}}\Phi =-\frac{\beta_{d}}{4t^{\alpha(n,d)+1}}\exp\left[-\frac{1}{4t}\Cc\right] x_{i}\cdot\nabla_{x_{i-1}}\Cc,
\end{equation*}
from which by taking the sum over $i$ from $2$ to $n$, we get
\begin{equation}
\sum_{i=2}^n x_i\cdot\nabla_{x_{i-1}}\Phi=-\frac{\beta_{d}}{4t^{\alpha(n,d)+1}}\exp\left[-\frac{1}{4t}\Cc\right]\sum_{i=2}^n x_i\cdot \nabla_{x_{i-1}}\Cc.\label{eq: temp2}
\end{equation}
The Laplacian, $\Delta_{x_n}\Phi$, with respect to variable $x_n$ is computed analogously
\begin{equation}
\Delta_{x_n}\Phi=\div_{x_n}\Big(\nabla_{x_n}\Phi\Big)=-\frac{1}{4}\frac{\beta_{d}}{t^{\alpha(n,d)+1}}\exp\left[-\frac{1}{4t}\Cc\right]\left(-\frac{1}{4t}|\nabla_{x_{n}}\Cc|^{2}+\Delta_{x_n}\Cc\right).\label{eq: temp3}
\end{equation}
It follows from \eqref{eq: temp1}, \eqref{eq: temp2} and \eqref{eq: temp3} that $\Phi$ is a solution of \eqref{eq3} if and only if
\begin{align*}
&\frac{\beta_{d}}{t^{\alpha(n,d)+1}}\exp\left[-\frac{1}{4t}\Cc\right]\left(-\alpha(n,d)+\frac{1}{4t}\Cc-\frac{1}{4}\partial_{t}\Cc\right)
\\&\qquad=-\frac{\beta_{d}}{4t^{\alpha(n,d)+1}}\exp\left[-\frac{1}{4t}\Cc\right]\sum_{i=2}^n x_i\cdot \nabla_{x_{i-1}}\Cc\\
 &\qquad\qquad -\frac{\beta_{d}}{4t^{\alpha(n,d)+1}}\exp\left[-\frac{1}{4t}\Cc\right]\left(-\frac{1}{4t}|\nabla_{x_{n}}\Cc|^{2}+\Delta_{n}\Cc\right)\\
 &\qquad=\frac{\beta_{d}}{4t^{\alpha(n,d)+1}}\exp\left[-\frac{1}{4t}\Cc\right]\left(-\sum_{i=2}^{n}x_i\cdot \nabla_{x_{i-1}}\Cc+\frac{1}{4t}|\nabla_{x_{n}}\Cc|^{2}-\Delta_{n}\Cc\right).
 \end{align*}
The above equality is equivalent to
\begin{align*}
-\alpha(n,d)+\frac{1}{4t}\Cc-\frac{1}{4}\partial_{t}\Cc& =\frac{1}{4}\left(-\sum_{i=2}^{n}x_i\cdot \nabla_{x_{i-1}}\Cc+\frac{1}{4t}|\nabla_{x_{n}}\Cc|^{2}-\Delta_{n}\Cc\right).
\end{align*}
By re-arranging the terms in the above equality and recalling that $\alpha(n,d)=\frac{n^2d}{2}$, we obtain \eqref{eq: eqn for C}. This finishes the proof of the proposition.
\end{proof}
The following matrices will play important role in the rest of the paper
\begin{align}
&H_{1}(t):=\mathrm{diag}(1,t,\cdots,t^{n-1})\quad \text{and}\quad H_{2}(t):=\begin{pmatrix}1 & t & \frac{t^{2}}{2!} & \frac{t^{3}}{3!} & \cdots & \frac{t^{n-1}}{(n-1)!}\\
 & t & t^{2} & \frac{t^{3}}{2!} & \cdots & \frac{t^{n-1}}{(n-2)!}\\
 &  & t^{2} & \frac{t^{3}}{1!} & \cdots & \frac{t^{n-1}}{(n-3)!}\\
 &  &  & \ddots & \cdots & \vdots\\
 &  &  &  &  & t^{n-1}
\end{pmatrix},\label{eq: H1 and H2}
\\&Q:=\begin{pmatrix}0\\
1 & 0\\
 & 1 & 0\\
 &  & \ddots & \ddots\\
 &  &  & 1 & 0
\end{pmatrix} \quad\text{and}\quad D:=\mathrm{diag}(0,\ldots,0,1),\label{Q and D}
\\&T_1:=(2n-1)H_{1}^{T}MH_{1}-2t(H_{1}')^{T}MH_{1}-t^{2-2n}H_{1}^{T}MH_{2}DH_{2}^{T}MH_{1},\label{T1}
\\&T_{2}:=(1-2n)H_{2}^{T}MH_{1}+t\big((H_{2}')^{T}MH_{1}+H_{2}^{T}\,M\,H_{1}'\big)-tQH_{2}^{T}MH_{1}+H_{2}^{T}MH_{0}MH_{1}\label{eq: T2}
\\&T_3:=(2n-1)H_{2}^{T}MH_{2}-2t(H_{2}')^{T}MH_{2}+2tQH_{2}^{T}MH_{2}-t^{2-2n}H_{2}^{T}MH_{2}DH_{2}^{T}MH_{2}\label{eq: T3}
\end{align}
Note that $H_1(t), H_2(t), Q,D\in \R^{dn\times dn}$. Each entry of these matrices should be understood as a matrix of order $d$ that equals to the entry multiply with the $d$-dimensional identity matrix.
\begin{proposition}
\label{prop: aux}
 The following assertions hold
\begin{enumerate}[(1)]
\item $T_1$ is anti-symmetric.
\item $T_2=0$.
\item $T_3$ is anti-symmetric.
\item $\mathrm{Tr}(DH_{2}^{T}MH_{2})=n^2\,d\,t^{2(n-1)}$.
\end{enumerate}
\end{proposition}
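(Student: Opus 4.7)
The four assertions arise naturally as coefficient-matching conditions when one substitutes the ansatz $\Cc_t(\x,\y) = t^{2-2n}\b^T M \b$, with $\b = H_1\y - H_2\x$, into the reduced PDE~\eqref{eq: eqn for C} from Proposition~\ref{prop: Phi and C}. The plan is therefore to compute each term of~\eqref{eq: eqn for C} in matrix form, group the result by the quadratic monomials $\y\otimes\y$, $\x\otimes\y$ and $\x\otimes\x$ together with the constant term, and then read off the four claims.

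First, using the chain rule with $\partial\b/\partial\x = -H_2$, one records
\[
\nabla_\x\Cc = -t^{2-2n}H_2^T(M+M^T)\b,\qquad |\nabla_{x_n}\Cc|^2 = (\nabla_\x\Cc)^T D\,\nabla_\x\Cc,
\]
together with $\sum_{i=2}^n x_i\cdot\nabla_{x_{i-1}}\Cc = \x^T Q\,\nabla_\x\Cc$ (by the very definition of the shift matrix $Q$) and $\Delta_{x_n}\Cc = 2t^{2-2n}\mathrm{Tr}\bigl(D H_2^T M H_2\bigr)$ (since $\Cc$ is a quadratic polynomial in $\x$ with constant Hessian block). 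The time derivative $\partial_t\Cc$ contributes the scaling $(2-2n)t^{1-2n}\b^T M\b$ plus terms involving $H_1'$ and $H_2'$.

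Next, one substitutes $\b = H_1\y - H_2\x$ throughout, multiplies~\eqref{eq: eqn for C} by an appropriate power of $t$, and groups by monomial type. The bilinear coefficient of $\y\otimes\y$ matches $T_1$, that of $\x\otimes\y$ matches $T_2$, that of $\x\otimes\x$ matches $T_3$, and the $\x,\y$-independent piece becomes $2dn^2 - 2t^{2-2n}\mathrm{Tr}(DH_2^T M H_2)$. Since~\eqref{eq: eqn for C} must hold for all $(\x,\y)$, each of these must vanish: for $T_1$ and $T_3$ this is precisely anti-symmetry (since $\y^T T_1\y\equiv 0$ is equivalent to $T_1+T_1^T = 0$, and likewise for $T_3$); for $T_2$ it is literal vanishing of the cross bilinear form $\x^T T_2\y$; for the constant it is claim (4).

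The hardest part will be verifying the four identities directly, and the main obstacle is expected to be claim (2). Claim (4) reduces to the scalar identity $\mathbf{v}^T M\mathbf{v} = n^2$ with $v_i = 1/(n-i)!$ (because $D$ extracts the $(n,n)$-block and $(H_2)_{i,n} = t^{n-1}/(n-i)!$), which follows from $MA = B$ combined with the explicit $LU$ and inverse $LU$ formulas in Theorem~\ref{theo: LU of A 2} and a Vandermonde-type sum. For (1) and (3), computing $T_1+T_1^T$ and $T_3+T_3^T$ and using the commutations $tH_1' = NH_1$ and $tH_2' = H_2 N$ with $N = \mathrm{diag}(0,1,\ldots,n-1)$ reduces anti-symmetry to symmetric matrix identities that can again be checked via $MA = B$. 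Claim (2) is the most delicate because $T_2 = 0$ is an exact identity rather than a symmetry statement, and it couples $M$, $H_1$, $H_2$, $Q$ and $N$ simultaneously; the plan is to tackle it last by a dedicated entry-wise calculation exploiting the shift relation between $Q H_2^T$ and $(H_2')^T$, together with $MA = B$.
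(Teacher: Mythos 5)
Your proposal correctly identifies the \emph{role} of the four assertions as coefficient-matching conditions when $\Cc_t=t^{2-2n}\b^TM\b$ is substituted into~\eqref{eq: eqn for C}, and some of your preparatory observations are sound: the commutations $tH_1'=NH_1$ and $tH_2'=H_2N$ with $N=\mathrm{diag}(0,1,\ldots,n-1)$ are both valid, the reduction of claim~(4) to $\sum_{k,l}M_{kl}/\big[(n-k)!(n-l)!\big]=n^2$ is correct, and appealing to $MA=B$ together with the $LU$-factorisation of Theorem~\ref{theo: LU of A 2} is the right source of structure. But what you have written is a plan, not a proof: none of the four identities is actually verified. You repeatedly say ``the plan is to \ldots'' and ``the hardest part will be \ldots'' rather than carrying anything out, and the entire content of Proposition~\ref{prop: aux} is precisely the execution of those verifications. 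Moreover, the phrase ``since~\eqref{eq: eqn for C} must hold for all $(\x,\y)$, each of these must vanish'' is logically backwards in context: Proposition~\ref{prop: aux} is what one must prove in order to \emph{establish}~\eqref{eq: eqn for C}, so one may not assume that equation when arguing for the identities.

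What is actually needed, and what the paper supplies, is a chain of explicit combinatorial lemmas and entry-wise computations that your proposal does not attempt. Concretely: an explicit formula for $B^{-1}$ (with $(B^{-1})_{ki}=(-1)^{k-1}/[(n+k-1)!(n+1-k-i)!]$ for $k+i\le n+1$) must be proved; the rank-one identity $H_2DH_2^T=t^{2(n-1)}H_0$ with $(H_0)_{ij}=1/[(n-i)!(n-j)!]$ must be established; and, most delicately, one needs the diagonalisation $(2n-1)I-2t(H_2^T)^{-1}(H_2')^T+2t(H_2^T)^{-1}QH_2^T=\mathrm{diag}(2n-1,2n-3,\ldots,1)$. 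This last fact is emphatically \emph{not} a consequence of the commutation $tH_2'=H_2N$ alone: the interaction with the shift matrix $Q$ is genuinely nontrivial and its proof reduces to binomial-coefficient identities obtained from Taylor coefficients of $(x-1)^{i-j}$. With those in hand, one then computes entry-by-entry that $T_{11}:=M^{-1}\mathrm{diag}(2n-1,\ldots,1)-H_0$ satisfies $(T_{11})_{ij}=(i-j)/\big[(2n+1-i-j)(n-i)!(n-j)!\big]$ (hence anti-symmetric), that $(T_{22})_{ij}=0$ identically, and that $T_{31}$ has the same form as $T_{11}$, while claim~(4) follows from the Leibniz-rule identity for $\big[x^n(1-x)^{n-1}\big]^{(k-1)}\big|_{x=1}$. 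Until you actually perform these calculations — or find a structurally different argument that bypasses them — the proposal remains a description of where the proof would go rather than a proof.
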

\begin{proof}
The assertions of the lemma are proved by using combinatorial techniques and are given in Appendix \ref{sec: App}.
\end{proof}
The following lemma is elementary but will be used several times in the sequel. We include it for the sake of completeness and reference.
\begin{lemma}
\label{lemma: anti}
 Suppose that $A\in\R^{N\times N}$ is an anti-symmetric
matrix, then for all $x\in\R^{N}$ we have,
\begin{equation*}
x^{T}Ax=0.
\end{equation*}
\end{lemma}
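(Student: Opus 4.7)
The plan is to exploit the fact that $x^T A x$ is a scalar quantity, so it equals its own transpose. First I would observe that for any $x \in \R^N$ and any $A \in \R^{N \times N}$, the product $x^T A x$ is a $1 \times 1$ matrix, hence $(x^T A x)^T = x^T A x$.

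Next I would compute the transpose directly using the reversal rule $(ABC)^T = C^T B^T A^T$, obtaining $(x^T A x)^T = x^T A^T x$. By the anti-symmetry hypothesis $A^T = -A$, this gives $x^T A^T x = -x^T A x$. Combining the two identities yields $x^T A x = -x^T A x$, so $2 x^T A x = 0$ and therefore $x^T A x = 0$.

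There is no real obstacle here; the only thing to be careful about is the ordering of the transpose, but since $x$ is a column vector and $A$ is square the dimensions match up trivially. The argument is two lines and requires nothing beyond the definition of anti-symmetry and the transpose rule, which is presumably why the authors include it only for completeness and later reference.
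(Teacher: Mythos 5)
Your proof is correct and follows essentially the same approach as the paper: both use the scalar identity $x^TAx=(x^TAx)^T=x^TA^Tx$ together with anti-symmetry, with only a cosmetic difference in how the final cancellation is written ($2x^TAx=0$ versus $\tfrac{1}{2}x^T(A+A^T)x=0$).
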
 
\begin{proof} We have $x^{T}Ax=x^{T}A^{T}x=\frac{1}{2}x^{T}(A+A^{T})x=0$.
\end{proof}
We are now ready to prove Theorem \ref{theo: maintheorem1}.
\begin{proof}[\textbf{Proof of Theorem \ref{theo: maintheorem1}}]
To prove Theorem \ref{theo: maintheorem1}, by Proposition \ref{prop: Phi and C} it is sufficient to prove \eqref{eq: eqn for C}.
According to Theorem \ref{theo: C} we have
\begin{align*}
\Cc_t(\x,\y)=\,t^{2-2n}\,[\b(t,\x,\y)]^{T}M\b(t,\x,\y)
\end{align*}
where 
\begin{align*}
\b(t,\x,\y) &=\begin{pmatrix}y_{1}-x_{1}-\frac{t}{1}x_{2}-...-\frac{t^{n-1}}{(n-1)!}x_{n}\\
\vdots\\
t^{i-1}\Big(y_{i}-\sum_{j=i}^{n}\frac{t^{j-i}}{(j-i)!}x_{j}\Big)\\
\vdots\\
t^{n-1}(y_{n}-x_{n})
\end{pmatrix}\\
 & =\begin{pmatrix}y_{1}\\
ty_{2}\\
t^{2}y_{3}\\
\vdots\\
t^{n-1}y_{n}
\end{pmatrix}-\begin{pmatrix}1 & t & \frac{t^{2}}{2!} & \frac{t^{3}}{3!} & \cdots & \frac{t^{n-1}}{(n-1)!}\\
 & t & t^{2} & \frac{t^{3}}{2!} & \cdots & \frac{t^{n-1}}{(n-2)!}\\
 &  & t^{2} & \frac{t^{3}}{1!} & \cdots & \frac{t^{n-1}}{(n-3)!}\\
 &  &  & \ddots & \cdots & \vdots\\
 &  &  &  &  & t^{n-1}
\end{pmatrix}\begin{pmatrix}x_{1}\\
x_{2}\\
x_{3}\\
\vdots\\
x_{n}
\end{pmatrix}\\
 & =H_{1}(t)\y-H_{2}(t)\x,
\end{align*}
where $H_1$ and $H_2$ are given in \eqref{eq: H1 and H2}. Therefore, we have
\begin{align}
\C_t(\x,\y) & =t^{2-2n}[\y^{T}H_{1}^{T}-\x^{T}H_{2}^{T}]\,M\,[H_{1}\y-H_{2}\x]\nonumber\\
 & =t^{2-2n}\Big[\y^{T}H_{1}^{T}\,M\,H_{1}\y-\x^{T}H_{2}^{T}\,M\,H_{1}\y-\y^{T}H_{1}^{T}\,M\,H_{2}\x+\x^{T}H_{2}^{T}MH_{2}\x\Big]\nonumber\\
 & =t^{2-2n}\Big[\y^{T}H_{1}^{T}\,M\,H_{1}\y-2\x^{T}H_{2}^{T}\,M\,H_{1}\y+\x^{T}H_{2}^{T}MH_{2}\x\Big].\label{eq: formula C}
\end{align}
Next we will verify Proposition \ref{prop: Phi and C}. We need to show that the function $\Cc$ satisfies Equation \eqref{eq: eqn for C}.

We compute the time-derivative of $\Cc$ first.
\begin{align}
\label{eq:verify1}
\partial_{t}\Cc & =\y^{T}\Big[(2-2n)t^{1-2n}H_{1}^{T}MH_{1}+2t^{2-2n}(H_{1}')^{T}MH_{1}\Big]\y\nonumber\\
 & \qquad-2\x^{T}\Big[(2-2n)t^{1-2n}H_{2}^{T}MH_{1}+t^{2-2n}\big((H_{2}')^{T}MH_{1}+H_{2}^{T}\,M\,H_{1}'\big)\Big]\y\nonumber\\
 & \qquad+\x^{T}\Big[(2-2n)t^{1-2n}H_{2}^{T}MH_{2}+2t^{2-2n}(H_{2}')^{T}MH_{2}\Big]\x\nonumber\\&=t^{1-2n}\Bigg\{
 \y^{T}\Big[(2-2n)H_{1}^{T}MH_{1}+2t(H_{1}')^{T}MH_{1}\Big]\y\nonumber\\
 & \qquad-2\x^{T}\Big[(2-2n)H_{2}^{T}MH_{1}+t\big((H_{2}')^{T}MH_{1}+H_{2}^{T}\,M\,H_{1}'\big)\Big]\y\nonumber\\
 & \qquad+\x^{T}\Big[(2-2n)H_{2}^{T}MH_{2}+2t(H_{2}')^{T}MH_{2}\Big]\x\Bigg\}.
\end{align}
From \eqref{eq: formula C}, we have
\begin{align*}
\nabla_{\x}\Cc & =2t^{2-2n}\Big[H_{2}^{T}\,M\,H_{2}\,\x-H_{2}^{T}\,M\,H_{1}\,\y\Big].
\end{align*}
Using the matrices $Q$ and $D$ defined in \eqref{Q and D}, $\sum_{i=2}^{n}x_{i}\cdot\nabla_{x_{i-1}}\Cc$ and $\nabla_{x_n}\Cc$ can be computed as follows
\begin{align}
\sum_{i=2}^{n}x_{i}\cdot\nabla_{x_{i-1}}\Cc & =\x^{T}Q\nabla_{\x}\Cc=2t^{2-2n}\Big[\x^{T}QH_{2}^{T}MH_{2}\x-\x^{T}Q\,H_{2}^{T}\,M\,H_{1}\y\Big],\label{eq: verify2}
\end{align}
and similarly
\begin{align*}
\nabla_{x_{n}}\Cc=D\nabla_{\x}C=2t^{2-2n}D\Big[H_{2}^{T}\,M\,H_{2}\,\x-H_{2}^{T}\,M\,H_{1}\,\y\Big].
\end{align*}
Therefore, we get
\begin{align}
|\nabla_{x_{n}}\Cc|^{2} & =4t^{4-4n}[\x^{T}H_{2}^{T}MH_{2}-\y^{T}H_{1}^{T}MH_{2}]D^{T}D\Big[H_{2}^{T}\,M\,H_{2}\,\x-H_{2}^{T}\,M\,H_{1}\,\y\Big]\nonumber\\
 & =4t^{4-4n}[\x^{T}H_{2}^{T}MH_{2}-\y^{T}H_{1}^{T}MH_{2}]D\Big[H_{2}^{T}\,M\,H_{2}\,\x-H_{2}^{T}\,M\,H_{1}\,\y\Big],\label{eq: verify3}
\end{align}
where to obtain the second equality we have used the fact that $D^{T}D=\mathrm{diag}(0,\ldots,0,1)=D$.

The Laplacian $\Delta_{x_n} \Cc$ is then computed via the Trace operator.
\begin{align}
\Delta_{x_{n}}\Cc=2t^{2-2n}\mathrm{Tr}(DH_{2}^{T}MH_{2}).\label{eq: vefify4}
\end{align}
We now verify that
\[
\partial_{t}\Cc=\frac{\Cc}{t}+\sum_{i=2}^{n}x_{i}\cdot\nabla_{x_{i-1}}\Cc-\frac{1}{4t}|\nabla_{x_n}\Cc|^{2}+\Delta_{x_n}\Cc-2dn^2.
\]
Substituting the computations from \eqref{eq:verify1} to \eqref{eq: vefify4}, we need to verify that
\begin{align*}
2dn^2\,t^{2n-1}& =\Big[\y^{T}\big(H_{1}^{T}MH_{1}\big)\y-2\x^{T}H_{2}^{T}MH_{1}\y+\x^{T}H_{2}^{T}MH_{2}\x\Big]\\
 & -\Big(\y^{T}\Big[(2-2n)H_{1}^{T}MH_{1}+2t(H_{1}')^{T}MH_{1}\Big]\y\\
 & \qquad-2\x^{T}\Big[(2-2n)H_{2}^{T}MH_{1}+t\big((H_{2}')^{T}MH_{1}+H_{2}^{T}\,M\,H_{1}'\big)\Big]\y\\
 & \qquad+\x^{T}\Big[(2-2n)H_{2}^{T}MH_{2}+2t(H_{2}')^{T}MH_{2}\Big]\x\Big)\\
 & +2t\Big[\x^{T}QH_{2}^{T}MH_{2}\x-\x^{T}Q\,H_{2}^{T}\,M\,H_{1}\y\Big]\\
 & -t^{2-2n}[\x^{T}H_{2}^{T}MH_{2}-\y^{T}H_{1}^{T}MH_{2}]D\Big[H_{2}^{T}\,M\,H_{2}\,\x-H_{2}^{T}\,M\,H_{1}\,\y\Big]\\
 & +2t\mathrm{Tr}(DH_{2}^{T}MH_{2}),
\end{align*}
or equivalently, using \eqref{T1}--\eqref{eq: T3}, we need to verify that
\begin{align}
\label{eq: verify5}
2 dn^2t^{2n-1} & =\y^{T}\Big[(2n-1)H_{1}^{T}MH_{1}-2t(H_{1}')^{T}MH_{1}-t^{2-2n}H_{1}^{T}MH_{2}DH_{2}^{T}MH_{1}\Big]\y\nonumber\\
 & \quad+2\x^{T}\Big[(1-2n)H_{2}^{T}MH_{1}+t\big((H_{2}')^{T}MH_{1}+H_{2}^{T}\,M\,H_{1}'\big)-tQH_{2}^{T}MH_{1}\nonumber\\
 & \qquad\qquad\qquad+t^{2-2n}H_{2}^{T}MH_{2}DH_{2}^{T}MH_{1}\Big]\y\nonumber\\
 & \quad+\x^{T}\Big[(2n-1)H_{2}^{T}MH_{2}-2t(H_{2}')^{T}MH_{2}+2tQH_{2}^{T}MH_{2}-t^{2-2n}H_{2}^{T}MH_{2}DH_{2}^{T}MH_{2}\Big]\x\nonumber\\
 & \quad+2t\mathrm{Tr}(DH_{2}^{T}MH_{2})\nonumber
 \\&=\y^T T_1 \y+2\x^T T_2\y+ \x^T T_3\x+2t\mathrm{Tr}(DH_{2}^{T}MH_{2}).
\end{align}
According to Proposition \ref{prop: aux} we have 
\begin{enumerate}[(i)]
\item $T_2=0$ and $2t\mathrm{Tr}(DH_{2}^{T}MH_{2})=2t\, dn^2 t^{2(n-1)}= 2 dn^2 t^{2n-1}$, 
\item $T_1$ and $T_3$ are anti-symmetric. By Lemma \ref{lemma: anti},
we have $\y^T T_1 \y=0=\x^T T_3\x$.
\end{enumerate}
Therefore, using (i)--(ii) above, we obtain
\[
\y^T T_1 \y+2\x^T T_2\y+ \x^T T_3\x+2t\mathrm{Tr}(DH_{2}^{T}MH_{2})=2dn^2t^{2n-1}
\]
which is equal to the left-hand side of \eqref{eq: verify5} as required.

Finally, the initial condition \eqref{eq: initial diract} follows from the representation of $\Cc$ in Theorem \ref{theo: C} and the formula of $\b$ in \eqref{eq: b}.
\end{proof}
\section{The variational formulation of Equation \eqref{eq: gKramers}}
\label{sec: variatonal}
\subsection{Well-posedness of Scheme \ref{scheme} and the Euler-Langrange equation}
In this section we prove the well-posedness of Scheme \ref{scheme} and establish the Euler-Lagrange equations for the sequence of its minimizers. 

Under Assumption \ref{assumpt} the free energy functional
\[
\F(\rho):=\int_{\R^{dn}}\big(V(x_n)+\log\rho\big)\rho\,d\x
\]
is well-defined in $\P_2(\R^{dn})$. The following two lemmas show that Scheme \ref{scheme} is well-defined. Their proofs are now classical, see e.g.,~\cite[Theorem 1.3]{Vil03},  \cite[Proposition 4.1]{JKO98}, and \cite[Lemma 4.2]{Hua00}). Hence we will omit them here. 
\begin{lemma}\label{existoptimalplan}
Let $\rho_0,\rho\in \P_2(\R^{dn})$ be given. There exists a unique optimal plan $\PSopt\in \Gamma(\rho_0,\rho)$ such that
\emph{
\begin{equation}
\W_h(\rho_0,\rho)=\int_{\R^{dn}\times \R^{dn}}\Cc_h(\x,\y)\PSopt(d\x\, d\y).
\end{equation}}
\end{lemma}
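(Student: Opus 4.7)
The plan is to follow the classical Kantorovich argument, adapted to the polynomial cost $\Cc_h$, and then to exploit the explicit quadratic structure of $\Cc_h$ given by Theorem \ref{theo: C} to upgrade existence to uniqueness. For existence, I would first note that the admissible set $\Gamma(\rho_0,\rho)$ is non-empty (it contains $\rho_0\otimes\rho$) and that $\int\Cc_h\,d(\rho_0\otimes\rho)<\infty$: indeed Theorem \ref{theo: C} together with \eqref{eq: b} shows that $\Cc_h$ is a polynomial of total degree two in $(\x,\y)$ and so integrable against any product of $\P_2$ measures; hence the infimum in \eqref{eq: MK cost} is finite. The set $\Gamma(\rho_0,\rho)$ is tight (its marginals are fixed) and narrowly closed (the marginal projections are weakly continuous), so it is narrowly compact by Prokhorov's theorem. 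Since $\Cc_h$ is continuous and non-negative, the functional $\gamma\mapsto\int\Cc_h\,d\gamma$ is lower semi-continuous under narrow convergence, and the direct method produces a minimizer $\PSopt$.

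For uniqueness, I would rewrite $\Cc_h$---as already done in the proof of \eqref{ineqs:qpC}---in the form
\begin{equation*}
\Cc_h(\x,\y)=h^{2-2n}\bigl|M_s^{1/2}\bigl(H_1(h)\y-H_2(h)\x\bigr)\bigr|^2,
\end{equation*}
where $M_s$ is the positive definite symmetric part of $M$ and $H_1(h),H_2(h)$ are the invertible upper triangular matrices of \eqref{eq: H1 and H2}. Introducing the linear bijections $\Psi(\x):=M_s^{1/2}H_2(h)\x$ and $\Phi(\y):=M_s^{1/2}H_1(h)\y$, the map $(\x,\y)\mapsto(\Psi(\x),\Phi(\y))$ sends $\Gamma(\rho_0,\rho)$ bijectively onto $\Gamma(\Psi_{\#}\rho_0,\Phi_{\#}\rho)$ and rescales the total cost by the constant $h^{2-2n}$. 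The problem therefore reduces to the standard quadratic Wasserstein problem between $\Psi_{\#}\rho_0$ and $\Phi_{\#}\rho$, for which Brenier's theorem furnishes a unique optimal plan (supported on the graph of the gradient of a convex function) as soon as one of the push-forwards is absolutely continuous with respect to Lebesgue measure.

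The main obstacle is precisely this last absolute-continuity issue: uniqueness for a quadratic cost generically fails without a regularity assumption on one marginal. Lemma \ref{existoptimalplan} is, however, only invoked inside Scheme \ref{scheme}, where $\rho_{k-1}^h$ has finite free energy and is therefore absolutely continuous; since $\Psi$ is a linear bijection, $\Psi_{\#}\rho_{k-1}^h$ inherits this absolute continuity. Brenier's theorem then applies and yields the claim, which is the sense in which the result is "classical" and justifies the citations to \cite{Vil03,JKO98,Hua00}.
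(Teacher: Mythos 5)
The paper omits the proof of this lemma, citing it as classical (\cite{Vil03,JKO98,Hua00}), so a line-by-line comparison is not possible; your proposal is, however, a correct and careful reconstruction of what those sources supply. The existence step via tightness of $\Gamma(\rho_0,\rho)$, narrow compactness by Prokhorov, and lower semi-continuity of $\gamma\mapsto\int\Cc_h\,d\gamma$ is standard. Your reduction of uniqueness to Brenier's theorem is exactly the right device: since $\Cc_h(\x,\y)=h^{2-2n}\bigl|M_s^{1/2}\bigl(H_1(h)\y-H_2(h)\x\bigr)\bigr|^2$ with $M_s$ positive definite (the cost is a non-negative quadratic form in $\b$ vanishing only at $\b=0$) and $H_1(h),H_2(h)$ invertible for $h>0$, the two linear bijections $\Psi,\Phi$ identify the $\Cc_h$-transport problem with a rescaled quadratic Wasserstein problem while preserving the $\Gamma(\cdot,\cdot)$-structure and the absolute continuity of the marginals.

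You have also spotted a genuine imprecision in the statement. As written, the lemma asserts uniqueness for arbitrary $\rho_0,\rho\in\P_2(\R^{dn})$, but for a quadratic cost uniqueness can fail when both marginals are purely atomic: the standard planar example with $\rho_0=\tfrac12(\delta_{(1,0)}+\delta_{(-1,0)})$, $\rho=\tfrac12(\delta_{(0,1)}+\delta_{(0,-1)})$ has two optimal plans, and pulling this back through $\Psi^{-1}$ and $\Phi^{-1}$ produces a counterexample for $\Cc_h$. The lemma is correct as it is actually used --- in Scheme \ref{scheme} the measure $\rho_{k-1}^h$ has finite entropy (by the assumption on $\rho_0$ for $k=1$ and by minimality for $k\ge2$) and is therefore absolutely continuous, which is precisely what Brenier's theorem requires --- but a clean statement should include an absolute-continuity hypothesis on one marginal, as the cited references do. This is a small but real defect in the stated hypotheses, and your workaround correctly identifies where the missing assumption is in fact available.
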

\begin{lemma}\label{wellposedness}
Let $\rho_0\in\P_2(\R^{dn})$ be given. If $h$ is small enough, then the minimization problem
\begin{equation}
\min_{\rho\in\P_2(\R^{dn})}\frac{1}{2h} \W_h(\rho_0,\rho)+ \F(\rho),
\end{equation}
has a unique solution.
\end{lemma}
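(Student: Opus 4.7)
The plan is to apply the direct method of the calculus of variations to
\[
\mathcal{J}_h(\rho) := \frac{1}{2h}\W_h(\rho_0,\rho) + \F(\rho),
\]
proceeding through three steps: a coercivity bound in the second moment $M_2(\rho):=\int_{\R^{dn}}|\x|^2\,\rho(d\x)$, narrow lower semicontinuity of each term, and strict convexity for uniqueness. First, I would combine the estimate \eqref{ineqs:qpC} taken at $t=h$ with the Young-type bound $|\y-\x|^2 \geq \tfrac12|\y|^2-|\x|^2$ to extract a pointwise inequality of the form $\Cc_h(\x,\y) \geq (\tfrac{1}{2K}-h^2)|\y|^2 - (\tfrac{1}{K}+h^2)|\x|^2$. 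Integrating against any $\gamma\in\Gamma(\rho_0,\rho)$ and taking the infimum gives $\W_h(\rho_0,\rho)\geq c_1 M_2(\rho) - c_2 M_2(\rho_0)$ with $c_1>0$ once $h$ is small enough. Combining with the classical Gaussian lower bound $\int\rho\log\rho\,d\x \geq -C_1 - \tfrac12 M_2(\rho)$ and using $V\geq 0$ yields
\[
\mathcal{J}_h(\rho) \geq \Bigl[\frac{c_1}{2h} - \frac12\Bigr] M_2(\rho) - C_2(h,\rho_0),
\]
so $\mathcal{J}_h$ is bounded below and every minimizing sequence has uniformly bounded second moments.

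Next, for compactness and semicontinuity, a minimizing sequence $(\rho^k)$ is tight by Prokhorov's theorem, so up to a subsequence $\rho^k\rightharpoonup\bar\rho$ narrowly with $\bar\rho\in\P_2(\R^{dn})$. The term $\int V\rho\,d\x$ is narrowly lower semicontinuous by the continuity and nonnegativity of $V$; $\rho\mapsto\int\rho\log\rho\,d\x$ is narrowly lsc by the standard convex-duality characterization; and $\W_h(\rho_0,\cdot)$ is narrowly lsc because the optimal plans $\PSopt^k$ produced by Lemma~\ref{existoptimalplan} have marginals $\rho_0$ and $\rho^k$, hence form a tight family whose cluster points belong to $\Gamma(\rho_0,\bar\rho)$; the nonnegativity and continuity of $\Cc_h$ then give $\liminf_k \int\Cc_h\,d\PSopt^k \geq \int\Cc_h\,d\PSopt^\infty \geq \W_h(\rho_0,\bar\rho)$ for any cluster point $\PSopt^\infty$. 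Thus $\bar\rho$ minimizes $\mathcal{J}_h$.

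For uniqueness, $\rho\mapsto\W_h(\rho_0,\rho)$ is convex since convex combinations of admissible couplings remain admissible and the Kantorovich cost is linear in the coupling; $\rho\mapsto\int V\rho\,d\x$ is linear; and $\rho\mapsto\int\rho\log\rho\,d\x$ is strictly convex on absolutely continuous measures. Hence $\mathcal{J}_h$ is strictly convex on $\P_2(\R^{dn})$ and the minimizer is unique. The main delicate point is the coercivity step: the defect $h^2(|\x|^2+|\y|^2)$ in \eqref{ineqs:qpC} and the $-\tfrac12 M_2(\rho)$ produced by the entropy both oppose coercivity, and the smallness hypothesis on $h$ is precisely what is needed to keep $\tfrac{c_1}{2h}-\tfrac12>0$ and secure the uniform second-moment bound along the minimizing sequence.
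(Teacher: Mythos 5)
Your argument is correct and is precisely the classical direct-method proof (coercivity via the quadratic lower bound on $\Cc_h$ plus the Gaussian entropy bound, tightness and narrow lower semicontinuity of each term, strict convexity of the entropy for uniqueness) that the paper delegates to \cite{JKO98,Vil03,Hua00} without reproducing. In particular, your use of \eqref{ineqs:qpC} with $t=h$ to extract $\W_h(\rho_0,\rho)\ge c_1 M_2(\rho)-c_2 M_2(\rho_0)$, and the observation that $h$ small is exactly what keeps $\tfrac{c_1}{2h}-\tfrac12>0$ against the entropy's $-\tfrac12 M_2$, is the right way to adapt the JKO coercivity step to this nonstandard cost, so the proposal matches the paper's intended (omitted) proof.
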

\noindent
Next we establish the Euler-Lagrange equation for the sequence of minimizers of Scheme \ref{scheme}. We will need two auxiliary lemmas whose proofs are presented in Appendix \ref{sec: App}.
\begin{lemma}
\label{lem: InverH2H1}
$H_2^{-1}H_1=H$ where
\begin{equation}
\label{eq: H}
H_{ij}=\begin{cases}
0, \quad \text{if}~ j<i\\
(-1)^{j-i}\frac{h^{j-i}}{(j-i)!},\quad \text{if}~ j\geq i.
\end{cases}
\end{equation}
In particular $H_{ii}=1, \quad H_{ii+1}=-h$ and $H_{ij}=o(h^2)$ for $j\geq i+2$. Note that $H\in\R^{dn \times dn}$ where $H_{ij}$ should be understood as $H_{ij}I_d$.
\end{lemma}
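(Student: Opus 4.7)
The plan is to avoid computing $H_2^{-1}$ directly, and instead verify the equivalent identity $H_2 H = H_1$ by a straightforward entry-wise calculation. Since $H_2$ is upper triangular with diagonal entries $1,t,t^2,\ldots,t^{n-1}$ (all nonzero for $t=h>0$), it is invertible, so uniqueness of the inverse will then give $H_2^{-1}H_1 = H$ as claimed.

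First, I would read off from \eqref{eq: H1 and H2} the explicit entries $(H_2)_{ik} = h^{k-1}/(k-i)!$ for $k\ge i$ (and $0$ otherwise) and $(H_1)_{ij} = h^{i-1}\delta_{ij}$, and then, using the proposed formula \eqref{eq: H} for the candidate $H$, compute the $(i,j)$ entry of the product. Since both $H_2$ and $H$ are upper triangular, the summation indices range over $i\le k\le j$, and I obtain
\begin{equation*}
(H_2 H)_{ij} \;=\; \sum_{k=i}^{j} \frac{h^{k-1}}{(k-i)!}\cdot(-1)^{j-k}\frac{h^{j-k}}{(j-k)!} \;=\; \frac{h^{j-1}}{(j-i)!}\sum_{k=i}^{j}\binom{j-i}{k-i}(-1)^{j-k}.
\end{equation*}

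The reindexing $m = k-i$ turns the inner sum into $\sum_{m=0}^{j-i}\binom{j-i}{m}(-1)^{j-i-m} = (1-1)^{j-i}$, which is $1$ when $i=j$ and $0$ otherwise. Hence $(H_2 H)_{ij} = h^{i-1}\delta_{ij} = (H_1)_{ij}$, which establishes $H_2 H = H_1$ and therefore $H_2^{-1}H_1 = H$. The particular values $H_{ii}=1$, $H_{i,i+1}=-h$ and the order estimate $H_{ij}=O(h^{j-i})$ for $j\ge i+2$ then follow by inspection from \eqref{eq: H}.

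I do not anticipate a serious obstacle: the only substantive step is recognising that the sum collapses via the binomial identity $(1-1)^N = 0$ for $N\ge 1$. The mild conceptual point is to argue via $H_2 H = H_1$ rather than to invert $H_2$ explicitly, which sidesteps any use of the formulas for $L^{-1}$ and $U^{-1}$ from Theorem \ref{theo: LU of A 2} and keeps the proof elementary.
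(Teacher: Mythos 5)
Your proof is correct and follows essentially the same route as the paper: verify $H_2 H = H_1$ entrywise, reduce the nontrivial off-diagonal case to the binomial collapse $\sum_{m=0}^{N}\binom{N}{m}(-1)^{N-m} = (1-1)^N = 0$, and handle the lower-triangular part by the support of the factors. The only cosmetic difference is that you package the cases $j>i$ and $j=i$ into a single formula, whereas the paper treats them separately; the substance is identical.
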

\begin{lemma}
\label{lem:K}
Let $\K=h^{2n-2}(H_2^TMH_1)^{-1}$. Then
\begin{equation}
\K_{ij}=(-1)^{n-j}\frac{h^{2n-i-j}}{(2n-i-j+1)!}.
\end{equation}
In particular, $\K_{nn}=1$ and $\K_{ij}=o(h)$ for all $(i,j)\neq (n,n)$. Note also that $\K\in \R^{dn\times dn}$ where $\K_{ij}$ should be understood as $\K_{ij}I_d$.
\end{lemma}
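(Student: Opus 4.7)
The plan is to express $\K = h^{2n-2}(H_2^{T}MH_1)^{-1}$ in a form where the inverse becomes transparent via the variational characterization of $\Cc_h$, thereby reducing the verification to a classical partial-fraction identity.

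\emph{Factorization.} First I would record the factorization $H_2 = H_1 P$, where $P_{ij} = h^{j-i}/(j-i)!$ for $j \ge i$ and zero otherwise; $P$ is the matrix exponential of the nilpotent shift, and its inverse coincides with the matrix $H$ of Lemma \ref{lem: InverH2H1}. Consequently $H_2^{T}MH_1 = P^{T}(H_1 M H_1)$, so
\[
\K = h^{2n-2}(H_1 M H_1)^{-1} P^{-T}.
\]

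\emph{Variational identification.} Since $H_1\y - H_2\x = H_1(\y - P\x)$, Theorem \ref{theo: C} rewrites $\Cc_h(\x,\y) = h^{2-2n}(\y - P\x)^{T}(H_1 M H_1)(\y - P\x)$. On the other hand, the Euler--Lagrange equation $\xi^{(2n)} = 0$ for \eqref{eq: Ch cost} forces the minimizer to be a polynomial of degree at most $2n-1$, determined by the $2n$ boundary conditions; solving and integrating $|\xi^{(n)}|^{2}$ explicitly yields
\[
\Cc_h(\x,\y)/h = (\y-P\x)^{T} W(h)^{-1}(\y-P\x), \qquad W(h)_{ij} = \frac{h^{2n-i-j+1}}{(n-i)!\,(n-j)!\,(2n-i-j+1)},
\]
the controllability Gramian of the $n$-fold integrator. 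Matching the mixed second derivative $\partial^{2}_{\x\y}\Cc_h$ computed from both formulas (whereby any antisymmetric part of $M$ drops out) yields $H_2^{T}MH_1 = h^{2n-1} P^{T} W(h)^{-1}$, hence
\[
\K = h^{-1} W(h) P^{-T}.
\]

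\emph{Combinatorial identity.} Using $(P^{-T})_{kj} = (-1)^{k-j} h^{k-j}/(k-j)!$ for $k \ge j$ together with the explicit entries of $W(h)$, the $(i,j)$-entry of $\K$ equals
\[
\K_{ij} = h^{2n-i-j} \sum_{k=j}^{n} \frac{(-1)^{k-j}}{(n-i)!\,(n-k)!\,(k-j)!\,(2n-i-k+1)}.
\]
Setting $a = n-i$, $b = n-j$ and substituting $p = k-j$, the summation reduces to the partial-fraction identity
\[
\sum_{p=0}^{b} (-1)^{p} \binom{b}{p} \frac{1}{a+b+1-p} = \frac{(-1)^{b}\,a!\,b!}{(a+b+1)!},
\]
itself obtained from $b!/[x(x+1)\cdots(x+b)] = \sum_{p} (-1)^{p}\binom{b}{p}/(x+p)$ at $x = a+1$ (after the reindexing $p \mapsto b-p$). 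This collapses to $\K_{ij} = (-1)^{n-j} h^{2n-i-j}/(2n-i-j+1)!$; in particular $\K_{nn} = 1$, and the vanishing of $\K_{ij}$ away from $(n,n)$ follows by inspection of the exponent of $h$.

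\emph{Main obstacle.} The delicate step is the variational identification of $H_1 M H_1$ with $h^{2n-1} W(h)^{-1}$, equivalently the very clean formula $M^{-1}_{ij} = 1/[(n-i)!(n-j)!(2n-i-j+1)]$. This identity is not transparent from $M = B U^{-1} L^{-1}$ via Theorem \ref{theo: LU of A 2}, so passing through the variational characterization of $\Cc_h$ is what keeps the argument short; the purely algebraic alternative---verifying $M \cdot W(1) = I$ directly using the explicit formulas for $B$ and $A^{-1}$---is feasible but combinatorially much heavier.
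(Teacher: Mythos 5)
Your argument is correct, but it arrives at the formula by a route genuinely different from the paper's. The paper writes $\K = H_1^{-1}M^{-1}(H_2^T)^{-1}$, computes $(H_2^T)^{-1}$ directly, and then substitutes the closed form $(M^{-1})_{ij} = \bigl[(2n{+}1{-}i{-}j)\,(n{-}i)!\,(n{-}j)!\bigr]^{-1}$, which is exactly \eqref{eq34-2} — already extracted from the $LU$-factorization of $A$ and the binomial identity of Lemma \ref{lem: comb identity} inside the proof of Proposition \ref{prop: aux} — and evaluates the resulting sum via the Beta integral $\int_0^1 t^\alpha(t-1)^\beta\,dt$ by repeated integration by parts. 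You instead factor $H_2 = H_1 P$ with $P = e^{hA}$ and identify $h^{1-2n}H_1 M H_1 = W(h)^{-1}$ through the controllability-Gramian representation of the minimum-energy cost, then evaluate the same entrywise sum by a partial-fraction identity (equivalent to the paper's Beta integral). What your route buys is a conceptual, control-theoretic derivation of the closed form for $M^{-1}$ that sidesteps the $A$, $B$, $L$, $U$ bookkeeping; what it overlooks is that this closed form is already available for free at \eqref{eq34-2}, so the step you flag as ``delicate'' is the one the paper has pre-computed, and the algebraic alternative you describe as ``combinatorially much heavier'' is precisely what the paper does anyway. Two small cautions for your write-up: the Gramian comparison pins down only the symmetric part $M_s$ of $M$ (as you acknowledge in passing), so to conclude $H_2^T M H_1 = h^{2n-1}P^T W(h)^{-1}$ rather than the version with $M_s$ you need $M = M^T$, which is immediate from the symmetry of the explicit $M^{-1}$ in \eqref{eq34-2} but should be said; and the assertion that minimizing $\int_0^h|\xi^{(n)}|^2$ over the boundary-value problem yields the Gramian form $(\y-P\x)^T W(h)^{-1}(\y-P\x)$ is classical minimum-energy control but is stated rather than derived — since it is the load-bearing replacement for the paper's algebra, it deserves at least a citation or a line of justification.
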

Having these two lemmas, we are now ready to derive the Euler-Lagrange equation for the sequence of minimizers in Scheme \ref{scheme}.
\begin{lemma}[Euler-Lagrange equation for the sequence of minimizers]
\label{lem: EU eqn}
Let $\{\rho_k^h\}_{k\geq 1}$ be the sequence of the minimizers of Scheme \ref{scheme}. Then we have
\emph{
\begin{multline}
\label{eq: ELeqn}
\frac{1}{h}\int_{\R^{2dn}}(\y-\x)\cdot\nabla \varphi(\y)P^k_h(d\x\,d\y)-\sum_{i=2}^{n}\int_{\R^{dn}}x_i\cdot\nabla_{x_{i-1}}\varphi(\x)\rho_k^h(\x) d\x + \int_{\R^{dn}}\nabla_{x_n}V(x_n)\cdot\nabla_{x_n}\varphi(\x)\rho_k^h(\x)\, d\x\\-\int_{\R^{dn}}\Delta_{x_n}\varphi(\x)\,\rho_k^h(\x)d\x=o(h)\qquad \text{for all}~~\varphi\in C_0^\infty(\R^{dn},\R),
\end{multline}}
where $P_k^h$ is the optimal plan in $\W_h(\rho_{k-1}^h,\rho_k^h)$.
\end{lemma}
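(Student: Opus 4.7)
The plan is to follow the standard JKO-type strategy: test the minimality of $\rho_k^h$ against a one-parameter family of competitors obtained by push-forward, and exploit one-sided differentiation at the base point. Concretely, I would pick a smooth compactly supported vector field $\xi\colon\R^{dn}\to\R^{dn}$, set $\Phi_\epsilon(\y):=\y+\epsilon\xi(\y)$ (a diffeomorphism for $|\epsilon|$ small since $\xi\in C_c^\infty$), and take the competitor $\rho_\epsilon:=\Phi_\epsilon\#\rho_k^h\in\P_2(\R^{dn})$. Since $(\mathrm{id}\times\Phi_\epsilon)\#P_h^k\in\Gamma(\rho_{k-1}^h,\rho_\epsilon)$ is admissible, one has the upper bound $\W_h(\rho_{k-1}^h,\rho_\epsilon)\le\int\Cc_h(\x,\Phi_\epsilon(\y))\,P_h^k(d\x\,d\y)$, with equality at $\epsilon=0$. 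Differentiating at $\epsilon=0^\pm$, combining with the derivative of $\F$ (whose potential and entropy pieces contribute $\int\nabla V(y_n)\cdot\xi_n\rho_k^h\,d\y$ and $-\int\div\xi\,\rho_k^h\,d\y$ respectively via change of variables), and invoking the minimality of $\rho_k^h$ once with $\xi$ and once with $-\xi$, I obtain the master identity
\begin{equation*}
\frac{1}{2h}\int \xi(\y)\cdot\nabla_\y\Cc_h(\x,\y)\,P_h^k(d\x\,d\y)+\int\nabla V(y_n)\cdot\xi_n(\y)\,\rho_k^h(\y)\,d\y-\int\div\xi(\y)\,\rho_k^h(\y)\,d\y=0.
\end{equation*}

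The heart of the matter is selecting $\xi$ so that this identity reduces to \eqref{eq: ELeqn}. I would take $\xi(\y):=\K\,\nabla\varphi(\y)$ with $\K=h^{2n-2}(H_2^TMH_1)^{-1}$ the matrix of Lemma \ref{lem:K}. Writing $\Cc_h=h^{2-2n}\b^TM\b$ with $\b=H_1\y-H_2\x$ and exploiting the symmetry of $M$ (verifiable directly from $M=BA^{-1}$, and in any case only the symmetric part contributes, as noted before \eqref{ineqs:qpC}), one has $\nabla_\y\Cc_h=2h^{2-2n}H_1^TM\b$. The very definition of $\K$ engineers the cancellation $\K^TH_1^TM=h^{2n-2}H_2^{-1}$, which yields
\[
\frac{1}{2h}\xi(\y)\cdot\nabla_\y\Cc_h(\x,\y)=\frac{1}{h}\nabla\varphi(\y)\cdot\bigl(H\y-\x\bigr),\qquad H:=H_2^{-1}H_1.
\]
By Lemma \ref{lem: InverH2H1}, $(H-I)_{i,i+1}=-h$ for $1\le i\le n-1$ while the other nonzero entries of $H-I$ are of order $h^2$, so
\[
\frac{1}{h}\nabla\varphi(\y)\cdot(H\y-\x)=\frac{1}{h}(\y-\x)\cdot\nabla\varphi(\y)-\sum_{i=2}^{n}y_i\cdot\nabla_{x_{i-1}}\varphi(\y)+O(h)(1+|\y|).
\]
Integrating against $P_h^k$ and using that its second marginal is $\rho_k^h$ delivers the first two terms of \eqref{eq: ELeqn}.

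For the potential and entropy pieces of the master identity I would exploit the structure of $\K$ itself. Lemma \ref{lem:K} shows $\K_{nn}=1$ while every other entry is $O(h)$, so $\xi_n(\y)=\nabla_{x_n}\varphi(\y)+O(h)\nabla\varphi$ and $\div\xi(\y)=\Delta_{x_n}\varphi(\y)+O(h)\nabla^2\varphi$. These convert the two remaining terms into $\int\nabla V(x_n)\cdot\nabla_{x_n}\varphi\,\rho_k^h\,d\x$ and $-\int\Delta_{x_n}\varphi\,\rho_k^h\,d\x$ respectively, modulo an error controlled by $\|\nabla V\|_{\mathrm{Lip}}\|\varphi\|_{C^2}\,h\,\bigl(1+\int|\y|^2\rho_k^h\,d\y\bigr)$, where the Lipschitz bound on $\nabla V$ comes from Assumption \ref{assumpt}. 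Collecting the four contributions yields \eqref{eq: ELeqn}.

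The principal obstacle is justifying the $o(h)$ qualifier, since the subleading entries of $H$ and $\K$ couple $\varphi$ and its derivatives to second moments of $\y$ under $P_h^k$. These are controlled uniformly in $k$ by the standard a priori bound $\F(\rho_k^h)\le\F(\rho_0)<\infty$ coming from the monotonicity of the scheme, combined with the inequality \eqref{ineqs:qpC} which bounds $\int|\y-\x|^2 P_h^k$ by $\W_h$ plus lower-order terms; together these give a uniform bound on $\int|\y|^2\rho_k^h\,d\y$. A secondary technical point, handled as in \cite{JKO98,Hua00}, is the justification of $\tfrac{d}{d\epsilon}\int\rho_\epsilon\log\rho_\epsilon\,d\x\big|_{\epsilon=0}=-\int\div\xi\,\rho_k^h\,d\y$ under only $L\log L$ regularity of $\rho_k^h$; this relies on the smoothness of the diffeomorphism $\Phi_\epsilon$ (whose Jacobian is $1+\epsilon\div\xi+O(\epsilon^2)$ uniformly) and the change-of-variables formula.
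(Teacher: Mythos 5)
Your proposal is correct and follows essentially the same route as the paper: derive the stationarity identity by testing $\rho_k^h$ against push-forward perturbations, then choose the specific vector field $\K\nabla\varphi$ with $\K=h^{2n-2}(H_2^TMH_1)^{-1}$, and expand $H=H_2^{-1}H_1$ and $\K$ to first order in $h$ via Lemmas~\ref{lem: InverH2H1} and~\ref{lem:K}. The only cosmetic difference is that the paper uses the flow $\partial_s\Phi^i_s=\phi_i(\Phi_s)$ rather than the linear perturbation $\Phi_\epsilon=\id+\epsilon\xi$ — the two are equivalent at first order — and in your line for $(H-I)\y\cdot\nabla\varphi$ the index on $\varphi$ should read $\nabla_{y_{i-1}}$ rather than $\nabla_{x_{i-1}}$.
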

\begin{proof}
Let $\overline\mu\in\P_2(\R^{dn})$ be given and let $\mu$ be the unique solution of the minimization problem
\begin{equation*}
\min_{\gamma\in\P_2(\R^{dn})}\frac{1}{2 h}\W_h(\overline\mu,\gamma)+\F(\gamma).
\end{equation*}
We will show that
\begin{multline}
\label{eq: EU eqn1}
\frac{1}{h}\int_{\R^{2dn}}(\y-\x)\cdot\nabla \varphi(\y)\PSopt(d\x\,d\y)-\sum_{i=2}^{n}\int_{\R^{dn}}x_i\cdot\nabla_{x_{i-1}}\varphi(\x)\mu(\x)\,d\x + \int_{\R^{dn}}\nabla_{x_n}V(x_n)\cdot\nabla_{x_n}\varphi(\x)\mu(\x)\, d\x\\-\int_{\R^{dn}}\Delta_{x_n}\varphi(\x)\,\mu(\x)d\x=o(h)\qquad \text{for all}~~\varphi\in C_0^\infty(\R^{dn},\R),
\end{multline}
where $\PSopt$ is the optimal plan in $\W_h(\overline\mu,\mu)$.

Although establishing the Euler-Lagrange equation for the minimizer $\mu$ has become a well-established route, see e.g., \cite{JKO98} and \cite{Hua00, DPZ13b} for that of the Fokker-Planck equation ($n=1$)
and of the Kramers equation ($n=2$) respectively, there is one additional difficulty. That is how to select an appropriate perturbation flow from that the Euler-Langrange equation is deduced. We first define a perturbation of $\mu$ by a push-forward under an appropriate flow.
Let $\phi_1,\ldots,\phi_n\in C_0^\infty(\R^{dn},\R^d)$. We define the flows
$\Phi^1,\ldots,\Phi^n\colon[0,\infty)\times\R^{dn}\rightarrow\R^d$ such that
\begin{equation*}
\frac{\partial\Phi^i_s}{\partial s}=\phi_i(\Phi^1_s,\ldots,\Phi^n_s),\quad \Phi^i_0(x_1,\ldots,x_n)=x_i.
\end{equation*}
Let $\mu_s(\x)$ be the push forward of $\mu(\x)$ under the flow $(\Phi^1_s,\ldots,\Phi^n_s)$, i.e., for any $\varphi\in C_0^\infty(\R^{dn},\R)$ we have
\begin{equation}
\int_{\R^{dn}}\varphi(\x)\mu_s(\x)d\x=\int_{\R^{dn}}\varphi(\Phi^1_s(\x),\ldots,\Phi^n_s(\x))\mu(\x)d\x. \label{pushforward}
\end{equation}
Since $(\Phi^1_0,\ldots,\Phi^n_0)=\x$, we have $\mu_0(\x)=\mu(\x)$. Taking derivatives with respect to $s$ of both sides gives
\begin{equation}
\partial_s\mu_s\big|_{s=0} =-\sum_{i=1}^n\text{div}_{x_i}\big(\mu\phi_i\big) \qquad \text{in the sense of distributions}.
\end{equation}
We then compute, using \eqref{eq: initial diract} and \eqref{eq: MK cost}, the stationarity condition for $\mu$ following the calculations in ~\cite{JKO98,Hua00, DPZ13b}
\begin{multline}
\frac{1}{2h}\int_{\R^{2dn}}\Big[\sum_{i=1}^n\nabla_{y_i}\Cc_h(\x,\y)\cdot\phi_i(\y)\Big]\PSopt(d\x\,d\y)+\int_{\R^{dn}}\nabla_{x_n}V(x_n)\cdot\phi_n(\x)\mu(\x)\, d\x\\-\int_{\R^{dn}}\Big[\sum_{i=1}^n\div_{x_i}\phi_i(\x)\Big]\,\mu(\x)d\x=0.
\label{EuLageqn}
\end{multline}
According to \eqref{eq: formula C} we have
\[
\Cc_h(\x,\y)=h^{2-2n}[\y^TH_1^T M H_1\y-2\x^TH_2^TMH_1\y+\x^TH_2^TMH_2\x].
\]
Therefore,
\begin{equation}
\nabla_{\y} \Cc_h(\x,\y)=2h^{2-2n}H_1^TM(H_1\y-H_2\x).
\end{equation}
Let $\varphi\in C_0^\infty(\R^{dn},\R)$. We choose $\phi_1,\ldots, \phi_n$ such that
\begin{equation}
\begin{pmatrix}
\phi_1\\
\vdots\\
\phi_n
\end{pmatrix}=\K\begin{pmatrix}
\nabla_{y_1} \varphi\\
\vdots\\
\nabla_{y_n}\varphi
\end{pmatrix} =\K \nabla \varphi
\end{equation}
where $\K$ is given in Lemma \ref{lem:K} that implies that $h^{2-2n}\K^T (H_1^TMH_2)=I$.  

Using Lemmas \ref{lem: InverH2H1} and \ref{lem:K},  we compute
 \begin{align*}
\text{(i)}~ \sum_{i=1}^n\nabla_{y_i}\Cc_h(\x,\y)\cdot\phi_i(y)&= \nabla_{\y}\Cc_h(\x,\y)\cdot\K\nabla\varphi(\y)=2\Big[(H_2^{-1}H_1-I)\y\cdot \nabla \varphi(\y) +(\y-\x)\cdot\nabla\varphi(\y)\Big]
\\&=2(\y-\x)\cdot\nabla\varphi(\y)-2h\sum_{i=2}^{n} y_{i}\cdot\nabla_{y_{i-1}}\varphi+o(h^2)
\end{align*}
(ii) $\phi_n(\x)=\sum_{j=1}^n \K_{nj}\nabla_{x_j}\varphi(\x)=\nabla_{x_n}\varphi(\x)+o(h)$.
\\ \ \\
(iii) $\sum_{i=1}^n\div_{x_i}\phi_i(\x)=\div_{\x}[\K \nabla \varphi(\x)]=\sum_{i,j}\K_{ij}\partial^2_{x_ix_j}\varphi=\Delta_{x_n}\varphi(\x) +o(h)$.
\\ \ \\
Substituting these calculations back into \eqref{EuLageqn} we obtain 
\begin{multline*}
\frac{1}{h}\int_{\R^{2dn}}(\y-\x)\cdot\nabla \varphi(\y)\PSopt(d\x\,d\y)-\sum_{i=2}^{n}\int_{\R^{dn}}x_i\cdot\nabla_{x_{i-1}}\varphi(\x)\mu(\x)\,d\x+\int_{\R^{dn}}\nabla_{x_n}V(x_n)\cdot\nabla_{x_n}\varphi(\x)\mu(\x)\, d\x\\-\int_{\R^{dn}}\Delta_{x_n}\varphi(\x)\,\mu(\x)d\x=o(h),
\end{multline*}
which is the desired equality \eqref{eq: EU eqn1}.

Applying  \eqref{eq: EU eqn1} for the minimizers $\{\rho_k^h\}_{k\geq 1}$ of Scheme \ref{scheme} yields the statement of Lemma \ref{lem: EU eqn}.
\end{proof}
\subsection{A priori estimates}
In this section, we derive a priori estimates for the sequence of the minimizers of Scheme \ref{scheme}. The proofs of Lemma \ref{priorbound1:scheme}, Lemma \ref{priorbound2} and Lemma \ref{lemma:priorbound3:Scheme2a} below are now standard, see e.g. \cite{JKO98,Hua00,DPZ13b}, hence we omit them.

The following lemma provides an upper bound for the sum $\sum_{k=1}^n\W_h(\rho_{k-1}^h,\rho_k^h)$. From now on, we denote by $M_2(\rho)$ the second moment of a probability measure $\rho$, i.e., $M_2(\rho)=\int |\x|^2\,d \rho$.
\begin{lemma}\label{priorbound1:scheme}
Let $\{\rho_k^h\}_{k\geq1}$ be the sequence of the minimizers of Scheme \ref{scheme} for fixed $h>0$.  Then for any positive integer $n$ and sufficiently small $h$, we have
\begin{equation}
\sum_{k=1}^n\W_h(\rho_{k-1}^h,\rho_k^h)\leq
2 h(\F(\rho_0)-\F(\rho_n^h))+Ch^2\sum_{k=0}^{n}M_2(\rho_k^h)+Cnh^2,\label{sumIneqn:scheme2a}
\end{equation}
for some constant $C>0$ independent of $n$.
\end{lemma}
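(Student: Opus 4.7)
The plan is to adapt the standard JKO telescoping argument to the $n$-th order hypoelliptic setting, the one non-obvious ingredient being that one must use a non-trivial competitor tailored to the free characteristic flow of the transport part of~\eqref{eq: gKramers}. Let $S_h\in\R^{dn\times dn}$ be the upper-triangular matrix with entries $(S_h)_{ij}=h^{j-i}/(j-i)!$ for $j\ge i$ and zero otherwise, and define $T_h\colon\R^{dn}\to\R^{dn}$ by $T_h(\x):=S_h\x$. Geometrically, $T_h$ is the time-$h$ flow of the deterministic system $\dot{x}_{i}=x_{i+1}$ for $i<n$, $\dot{x}_n=0$, i.e.\ the characteristics of the first-order transport part of~\eqref{eq: gKramers}.

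The crucial point is that $T_h$ has three structural properties that make $T_{h\#}\rho_{k-1}^h$ an ideal competitor for $\rho_k^h$ in Scheme~\ref{scheme}. First, $\Cc_h(\x,T_h\x)=0$: the degree-$(n-1)$ polynomial $\xi(s)=\sum_{j=1}^{n}\frac{s^{j-1}}{(j-1)!}x_j$ realises the boundary conditions~\eqref{eq: boundary conditions} between $\x$ and $T_h\x$ and satisfies $\xi^{(n)}\equiv 0$, so by the very definition~\eqref{eq: Ch cost} it is an optimal curve with vanishing cost; in particular, testing with the plan $(\mathrm{id},T_h)_{\#}\rho_{k-1}^h$ yields $\W_h(\rho_{k-1}^h,T_{h\#}\rho_{k-1}^h)=0$. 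Second, $T_h$ is volume-preserving since $S_h$ is unipotent ($\det S_h=1$), so the Boltzmann entropy is invariant under push-forward by $T_h$. Third, the last row of $S_h$ equals $(0,\ldots,0,1)$, hence $(T_h\x)_n=x_n$ and $\int V(x_n)\rho\,d\x$ is also invariant. Combined, these give $\F(T_{h\#}\rho_{k-1}^h)=\F(\rho_{k-1}^h)$ exactly.

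Plugging $\rho=T_{h\#}\rho_{k-1}^h$ as a competitor in the minimisation~\eqref{eq: min prob} and multiplying by $2h$ yields
\[
\W_h(\rho_{k-1}^h,\rho_k^h)+2h\,\F(\rho_k^h)\;\le\;\W_h(\rho_{k-1}^h,T_{h\#}\rho_{k-1}^h)+2h\,\F(T_{h\#}\rho_{k-1}^h)\;=\;2h\,\F(\rho_{k-1}^h),
\]
and summing telescopically over $k=1,\ldots,n$ gives the clean bound $\sum_{k=1}^n\W_h(\rho_{k-1}^h,\rho_k^h)\le 2h\big(\F(\rho_0)-\F(\rho_n^h)\big)$. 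The two residual terms $Ch^2\sum_{k=0}^n M_2(\rho_k^h)+Cnh^2$ in the statement are convenient slack that can be produced via the standard entropy-versus-second-moment estimate $\int\rho\log\rho\ge -\alpha M_2(\rho)-C_\alpha$ (valid for every $\alpha>0$) together with $V\ge 0$: this controls $-\F(\rho_n^h)$ by a small constant multiple of $M_2(\rho_n^h)$ plus a harmless constant, and a rearrangement produces error terms of the form stated in the lemma.

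The main obstacle I foresee is identifying the right competitor: the naive choice $\rho=\rho_{k-1}^h$ fails badly because only the last entry of $H_1\x-H_2\x$ vanishes, so $\Cc_h(\x,\x)$ blows up like $h^{4-2n}|\x|^2$ for $n\ge 3$ and the trivial coupling is useless. What salvages the argument is that $\Cc_h$, being the minimum of $h\int|\xi^{(n)}|^2$, is by construction invisible to motion along the free characteristics, and that the corresponding flow $T_h$ happens to preserve both entropy (by unipotency) and the potential (by fixing $x_n$). Once this competitor is in place the remainder is the now-classical JKO computation from~\cite{JKO98,Hua00,DPZ13b}.
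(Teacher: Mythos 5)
Your approach---pushing $\rho_{k-1}^h$ forward along the free characteristic flow $T_h(\x)=S_h\x$ and using it as the competitor in~\eqref{eq: min prob}---is exactly the natural $n$-th order adaptation of the argument in the cited references \cite{JKO98,Hua00,DPZ13b}, which is the proof the paper has in mind and omits as standard. Your three structural verifications are all correct: the degree-$(n-1)$ polynomial interpolant has vanishing $n$-th derivative, so $\Cc_h(\x,T_h\x)=0$ and hence $\W_h(\rho_{k-1}^h,T_{h\#}\rho_{k-1}^h)=0$; $S_h$ is unipotent so $\det S_h=1$ and the Boltzmann entropy is invariant under $T_{h\#}$; and $(T_h\x)_n=x_n$, so the potential term $\int V(x_n)\rho\,d\x$ is also invariant (this is precisely where the paper's standing assumption that $V$ depends \emph{only} on the last coordinate is used). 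Consequently $\F(T_{h\#}\rho_{k-1}^h)=\F(\rho_{k-1}^h)$ exactly, and the telescoping gives the \emph{cleaner} bound $\sum_{k=1}^n\W_h(\rho_{k-1}^h,\rho_k^h)\le 2h\bigl(\F(\rho_0)-\F(\rho_n^h)\bigr)$, with no error terms at all. Since the additional terms $Ch^2\sum_{k=0}^n M_2(\rho_k^h)+Cnh^2$ in~\eqref{sumIneqn:scheme2a} are nonnegative, your stronger inequality trivially implies the stated one, and you need not---and indeed should not---try to ``produce'' them: the last paragraph of your write-up, invoking the entropy-versus-second-moment lower bound, does not generate terms of that shape and is superfluous. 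Those error terms are an artefact of the more general situation treated in \cite{Hua00,DPZ13b}, where $V$ depends on a coordinate not fixed by the free flight, so $\F$ changes by $O(h)$ under $T_h$; in the present setting with $V=V(x_n)$ they vanish identically, and the statement as written simply carries harmless slack.
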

The next lemma shows boundedness of the second moment $M_2(\rho_k^h)$ and the entropy $S(\rho_k^h)$ locally in time.
\begin{lemma}\label{priorbound2}
There exist positive constants $T_0$, $h_0$, and $C$,  independent of the initial data, such that for any $0<h\leq h_0$, the solutions $\{\rho_k^h\}_{k\geq 1}$ for Scheme \ref{scheme} satisfy
\begin{equation}
M_2(\rho_k^h)\leq C\big[M_2(\rho_0)+1\big] ~ \text{ and } ~|S(\rho_k^h)|\leq C\big[S(\rho_0) + M_2(\rho_0) + 1\big] ~ \text{for any }k\leq K_0,
\end{equation}
where $K_0=\lceil{T_0}/{h}\rceil$.
\end{lemma}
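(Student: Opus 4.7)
The plan is to follow the standard JKO-type strategy, as executed in~\cite{JKO98,Hua00,DPZ13b} for $n=1,2$, now adapted to the mean squared derivative cost $\Cc_h$. Three ingredients drive the argument: the quadratic control of $|\y-\x|^2$ by $\Cc_h$ furnished by inequality \eqref{ineqs:qpC}; the sum estimate on $\sum_k \W_h(\rho_{k-1}^h,\rho_k^h)$ of Lemma \ref{priorbound1:scheme}; and the classical entropy lower bound $-S(\rho)\leq\epsilon M_2(\rho)+C_\epsilon$ obtained by writing the relative entropy against a Gaussian $g_\epsilon\propto e^{-\epsilon|\x|^2}$ and using nonnegativity.

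For the second moment I would first derive a per-step estimate. Let $P_k^h$ be the optimal plan for $\W_h(\rho_{k-1}^h,\rho_k^h)$, which exists by Lemma \ref{existoptimalplan}. The identity $|\y|^2-|\x|^2=(\y-\x)\cdot(\y+\x)$ together with Young's inequality yields, for any $\epsilon>0$,
\begin{equation*}
M_2(\rho_k^h)-M_2(\rho_{k-1}^h)\leq \epsilon\bigl(M_2(\rho_{k-1}^h)+M_2(\rho_k^h)\bigr)+\tfrac{1}{2\epsilon}\int|\y-\x|^2\,P_k^h(d\x\,d\y).
\end{equation*}
Substituting \eqref{ineqs:qpC} with $t=h$ and optimizing in $\epsilon$ at scale $h$ gives the one-step bound
$M_2(\rho_k^h)-M_2(\rho_{k-1}^h)\leq Ch\bigl(M_2(\rho_{k-1}^h)+M_2(\rho_k^h)\bigr)+(C/h)\,\W_h(\rho_{k-1}^h,\rho_k^h)$. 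Summing over $k=1,\ldots,N$, applying Lemma \ref{priorbound1:scheme} to trade $\sum_k \W_h$ for a telescoping free-energy difference plus small $M_2$-errors, and using Assumption \ref{assumpt} (Lipschitz $\nabla V$ forces $V(x)\leq C(1+|x|^2)$, hence $V(\rho_0)\leq C(1+M_2(\rho_0))$) together with the entropy lower bound applied to $\rho_N^h$ to control $-\F(\rho_N^h)$, I arrive at an inequality of the form
\begin{equation*}
A_N\leq C\bigl[M_2(\rho_0)+1\bigr]+C_1(T_0+\epsilon)\,A_N,\qquad A_N:=\max_{k\leq N}M_2(\rho_k^h),
\end{equation*}
valid for $N\leq K_0=\lceil T_0/h\rceil$. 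Choosing $T_0$ and $\epsilon$ small enough so that $C_1(T_0+\epsilon)\leq 1/2$, and $h_0$ small enough for the absorption in the one-step estimate to be legal, the maximum is absorbed into the left-hand side and one obtains $M_2(\rho_k^h)\leq C[M_2(\rho_0)+1]$ for all $k\leq K_0$.

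Given the moment bound, the two sides of the entropy estimate follow quickly. The lower bound $-S(\rho_k^h)\leq M_2(\rho_k^h)+c_0\leq C[M_2(\rho_0)+1]$ is the same Gaussian relative-entropy inequality (with $\epsilon=1$) applied to $\rho_k^h$. For the upper bound, since $V\geq 0$ implies $S\leq \F$, Lemma \ref{priorbound1:scheme} yields $\F(\rho_N^h)\leq \F(\rho_0)+Ch\sum_k M_2(\rho_k^h)+CT_0\leq \F(\rho_0)+C'[M_2(\rho_0)+1]$, and $\F(\rho_0)\leq V(\rho_0)+S(\rho_0)\leq C[S(\rho_0)+M_2(\rho_0)+1]$ using the quadratic growth of $V$ once more. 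Combining the two inequalities gives $|S(\rho_k^h)|\leq C[S(\rho_0)+M_2(\rho_0)+1]$.

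The only genuinely delicate point is the two-scale choice of $\epsilon$ in the one-step moment estimate: large enough that the $h^2$ correction in \eqref{ineqs:qpC}, multiplied by $1/\epsilon$, is still only $O(h)$ and can be absorbed into the linear part, yet small enough that the effective Gronwall constant $C_1$ lets the argument close on a fixed time window $[0,T_0]$. The rest is a routine adaptation of the JKO-Huang-Duong-Peletier-Zimmer scheme, made possible for arbitrary $n$ by the explicit formula for $\Cc_h$ in Theorem \ref{theo: C} that underlies \eqref{ineqs:qpC}.
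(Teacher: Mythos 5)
The paper itself omits the proof, deferring to the standard JKO-type arguments of~\cite{JKO98,Hua00,DPZ13b}, and your proposal reconstructs that route faithfully in structure: the per-step moment estimate from $|\y|^2-|\x|^2=(\y-\x)\cdot(\y+\x)$ with Young's inequality at scale $h$, the quadratic control~\eqref{ineqs:qpC}, the sum estimate of Lemma~\ref{priorbound1:scheme}, the Gaussian relative-entropy lower bound $-S(\rho)\le\epsilon M_2(\rho)+C_\epsilon$, and Gronwall-type absorption on a short window $[0,T_0]$, with $T_0$ and $h_0$ determined by universal constants and hence independent of $\rho_0$.

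There is, however, a gap in the closing inequality of the second-moment part. Trading $\sum_k\W_h$ via Lemma~\ref{priorbound1:scheme} puts $\F(\rho_0)=\int V(x_n)\rho_0\,d\x+S(\rho_0)$ on the right-hand side. You absorb the potential part using the quadratic growth of $V$ forced by Assumption~\ref{assumpt}, and you control $-\F(\rho_N^h)$ with the entropy bound applied to $\rho_N^h$; but the positive part of $S(\rho_0)$ is left unaccounted for, and it cannot be absorbed into $M_2(\rho_0)+1$ since the entropy is not controlled by the second moment (a narrowing family of Gaussians has bounded $M_2(\rho_0)$ while $S(\rho_0)\to+\infty$). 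What your argument actually yields is $A_N\le C\big[\F(\rho_0)+M_2(\rho_0)+1\big]+C_1(T_0+\epsilon)A_N$, matching what~\cite{JKO98} proves, rather than the claimed $A_N\le C\big[M_2(\rho_0)+1\big]+C_1(T_0+\epsilon)A_N$. The honest conclusion of the moment estimate is therefore $M_2(\rho_k^h)\le C\big[\F(\rho_0)+M_2(\rho_0)+1\big]$, equivalently carrying an $S(\rho_0)^{+}$ term on the right. The same remark applies to the statement of Lemma~\ref{priorbound2} itself, so this appears to be an imprecision in the paper rather than a defect peculiar to your reconstruction; and since Lemma~\ref{lemma:priorbound3:Scheme2a}, which is what actually enters the proof of Theorem~\ref{theo:maintheorem 2}, allows its constant to depend on the initial data, nothing downstream is affected. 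Still, as written, neither the lemma nor your final inequality follows from the argument you give.
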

The last lemma of this section extends Lemma \ref{priorbound2} to any final time $T>0$.
\begin{lemma}
\label{lemma:priorbound3:Scheme2a}
Let $\{\rho_k^h\}_{k\geq1}$ be the sequence of the minimizers of Scheme \ref{scheme} for fixed $h>0$. For any $T>0$, there exists a constant $C>0$ depending  on $T$ and on the initial data such that
\begin{equation}
M_2(\rho_k^h)\leq C,\label{2moment}
\end{equation}
\begin{equation}
\sum_{i=1}^k \W_h(\rho_{i-1}^h,\rho_i^h)\leq Ch, \label{summetric}
\end{equation}
\begin{equation}
\int_{\R^{dn}}\max\{\rho_k^h\log \rho_k^h, 0\}\,dqdp\leq C, \label{absoluteentropy}
\end{equation}
for any $h\leq h_0$ and $k\leq K_h$, where $K_h=\left\lceil\frac{T}{h}\right\rceil$.
\end{lemma}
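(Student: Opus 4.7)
The plan is to extend the local-in-time estimates of Lemma~\ref{priorbound2} to an arbitrary final time $T>0$ by iterating the scheme on consecutive time windows of length $T_0$. The key observation that enables this bootstrap is that the constants $T_0$, $h_0$, $C$ in Lemma~\ref{priorbound2} are universal and independent of the initial datum, so the estimate there can be re-applied to the restarted scheme starting from any $\rho_k^h$.

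First, I would partition the index range $\{0,1,\ldots,K_h\}$ into $N:=\lceil T/T_0\rceil$ consecutive blocks of length at most $K_0=\lceil T_0/h\rceil$. On block~1, Lemma~\ref{priorbound2} applied with initial datum $\rho_0$ yields $M_2(\rho_k^h)\leq C(M_2(\rho_0)+1)$ and $|S(\rho_k^h)|\leq C(|S(\rho_0)|+M_2(\rho_0)+1)$ for every index $k$ in this block. On block~2, I would apply the same lemma with $\rho_{K_0}^h$ playing the role of the initial datum; since the previous bounds already control $M_2(\rho_{K_0}^h)$ and $|S(\rho_{K_0}^h)|$, the resulting bounds inflate by a factor $C$ but remain finite. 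Iterating this argument $N$ times produces uniform-in-$h$ bounds on $M_2(\rho_k^h)$ and $|S(\rho_k^h)|$ for all $k\leq K_h$, with constants depending on $N$ (hence on $T$) and on the initial data. This establishes~\eqref{2moment}.

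Second, applying Lemma~\ref{priorbound1:scheme} with summation index up to any $k\leq K_h$ gives
\[
\sum_{i=1}^k \W_h(\rho_{i-1}^h,\rho_i^h)\leq 2h\bigl(\F(\rho_0)-\F(\rho_k^h)\bigr)+Ch^2\sum_{j=0}^{k}M_2(\rho_j^h)+Ckh^2.
\]
Since $V\geq 0$ by Assumption~\ref{assumpt}, $\F(\rho_k^h)\geq -|S(\rho_k^h)|$, which is uniformly bounded by the first step. Using $kh\leq T+h$ together with the uniform bound on $M_2$, each of the three terms on the right is $O(h)$, which yields~\eqref{summetric}. For~\eqref{absoluteentropy}, I would split $\rho\log\rho = \max\{\rho\log\rho,0\}+\min\{\rho\log\rho,0\}$ and invoke the standard Gaussian-comparison bound $-\int_{\R^{dn}}\min\{\rho\log\rho,0\}\,d\x\leq C(1+M_2(\rho))$, combined with the uniform control on $|S(\rho_k^h)|$ and $M_2(\rho_k^h)$ from the bootstrap.

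The main obstacle is pure bookkeeping: the naive iteration produces a prefactor of order $C^N$. Since $N=\lceil T/T_0\rceil$ depends only on $T$ and the universal constant $T_0$, and not on $h$, this prefactor is absorbed into a single finite constant $C=C(T,\rho_0)$, which is precisely the dependence asserted in the lemma.
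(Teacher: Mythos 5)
Your proposal is correct and follows the standard bootstrap route that the paper points to by citing \cite{JKO98,Hua00,DPZ13b} in lieu of a proof: iterate Lemma~\ref{priorbound2} over $N=\lceil T/T_0\rceil$ consecutive windows (possible because the constants there are independent of the initial datum), feed the resulting uniform bounds on $M_2(\rho_k^h)$ and $|S(\rho_k^h)|$ into Lemma~\ref{priorbound1:scheme} to obtain~\eqref{summetric}, and invoke the Gaussian-comparison lower bound $S(\rho)\geq -C(1+M_2(\rho))$ to pass from control of $S$ and $M_2$ to~\eqref{absoluteentropy}. This is essentially the same approach as the proof the paper omits.
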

\subsection{Proof of Theorem \ref{theo:maintheorem 2}}
Having established the Euler-Lagrange equation and a priori estimates in previous sections, in this section we prove Theorem \ref{theo:maintheorem 2}. The proof is similar to that of \cite{JKO98,Hua00,DPZ13b}. Therefore, we only present the part that is different, that is to prove the convergence of the discrete Euler-Lagrange equations to the weak formulation \eqref{weakKReqn} of Equation \eqref{eq: gKramers} as $h\to0$. The key point  is to link the Euler-Lagrange equation for the sequence of minimizers obtained in Lemma \ref{lem: EU eqn} to a time-discretization of Equation \eqref{eq: gKramers}.

Let $T>0$ be a given final time. For each $h>0$ we set $K_h \colonequals \lceil T/h\rceil$. Let $(\rho_k^h)_{k\geq 1}$ be the sequence of minimizers of Scheme \ref{scheme} and let $t\mapsto \rho^h(t)$ be the piecewise-constant interpolation~\eqref{interpolation}. By Lemma~\ref{lemma:priorbound3:Scheme2a} we have
\begin{align}
M_2(\rho^h(t))+\int_{\R^{dn}}\max\{\rho^h(t)\log \rho^h(t), 0\}\,d\x&\leq C,\qquad\text{for all} \quad 0\leq t \leq T. \label{sumM2Entropy}
\end{align}
Since the function $z\mapsto \max\{z\log z,0\}$ has super-linear growth, \eqref{sumM2Entropy} guarantees that there exists a subsequence, denoted again by $\rho^h$, and a function $\rho\in L^1((0,T)\times\R^{dn})$ such that
\begin{equation}
\rho^h\rightarrow \rho ~\text{ weakly in }~ L^1((0,T)\times\R^{dn}).
\end{equation}
We now prove that the limit $\rho$ satisfies the weak formulation \eqref{weakKReqn}.
Let $\varphi\in C_c^\infty((-\infty,T)\times \R^{dn})$ be given. All constants~$C$ below depend on the parameters of the problem, on the initial datum $\rho_0$, and on~$\varphi$, but are independent of $k$ and of $h$. 

Let $P_k^{h}\in\Gamma(\rho_{k-1}^h,\rho_k^h)$ be the optimal plan for $\W_h(\rho_{k-1}^h,\rho_k^h)$. For any $0<t<T$, we have
\begin{align}
&\int_{\R^{dn}}\big[\rho_k^h(\x)-\rho_{k-1}^h(\x)\big]\,\varphi(t,\x)d\x=\int_{\R^{dn}}\rho_k^h(\y)\varphi(t,\y)d\y-\int_{\R^{dn}}\rho_{k-1}^h(\x)\varphi(t,\x)d\x\nonumber\\
&\qquad=\int_{\R^{2dn}}\big[\varphi(t,\y)-\varphi(t,\x)\big]\,P_k^{h}(d\x d\y) \nonumber
\\&\qquad=\int_{\R^{2dn}}(\y-\x)\cdot\nabla \varphi(t,\y)P_k^{h}(d\x d\y) +\varepsilon_k,\label{timederivativeappro}
\end{align}
where the error term $\varepsilon_k$ comes from the Taylor expansion of $\varphi$ and can be estimated by
\begin{align}
|\varepsilon_k|&\leq C\int_{\R^{2dn}}|\y-\x|^2\,P_k^{h}(d\x d\y)\notag\\
&\stackrel{\eqref{ineqs:qpC}}\leq C \W_h(\rho_{k-1}^h,\rho_k^h) + Ch^2\big[M_2(\rho_{k-1}^h) + M_2(\rho_k^h)\big]\notag\\
&\stackrel{\eqref{sumM2Entropy}}\leq C \W_h(\rho_{k-1}^h,\rho_k^h) + Ch^2.
\label{epsilon}
\end{align}
Multiplying~\eqref{timederivativeappro} with $\frac{1}{h}$ and combining with~\eqref{eq: ELeqn} we get
\begin{multline}
\int_{\R^{dn}}\left(\frac{\rho_k^h(t,\x)-\rho_{k-1}^h(t,\x)}{h}\right)\varphi(t,\x)d\x
\\
=\int_{\R^{dn}}\Big[\sum_{i=2}^{n}x_i\cdot\nabla_{x_{i-1}}\varphi(\x) -\nabla_{x_n}V(x_n)\cdot\nabla_{x_n}\varphi(\x)+\Delta_{x_n}\varphi(\x)\Big]\,\rho_k^h(\x)d\x+\theta_k(t), \label{approximation1}
\end{multline}
where
\begin{equation}
|\theta_k(t)|\leq\frac{|\varepsilon_k|}{h}+o(h)\stackrel{\eqref{sumM2Entropy},\eqref{epsilon}}\leq
\frac Ch \W_h(\rho_{k-1}^h,\rho_k^h)  + o(h).
\label{thetaform}
\end{equation}
It is worthy noting that $\theta_k$ depends on $t$ through the $t$-dependence of $\varphi$. Integrating (\ref{approximation1}) with respect to $t$ from $(k-1)h$ to $kh$, we obtain
\begin{align}
&\int_{(k-1)h}^{kh}\int_{\R^{dn}}\left(\frac{\rho_k^h(\x)-\rho_{k-1}^h(\x)}{h}\right)\varphi(t,\x)d\x dt\nonumber
\\&\quad=\int_{(k-1)h}^{kh}\int_{\R^{dn}}\Big[\sum_{i=2}^{n}x_i\cdot\nabla_{x_{i-1}}\varphi(\x) -\nabla_{x_n}V(x_n)\cdot\nabla_{x_n}\varphi(\x)+\Delta_{x_n}\varphi(\x)\Big]\,\rho_k^h(\x)d\x\,dt+ \int_{(k-1)h}^{kh}\theta_k(t)dt\nonumber
\\&\quad=\int_{(k-1)h}^{kh}\int_{\R^{dn}}\Big[\sum_{i=2}^{n}x_i\cdot\nabla_{x_{i-1}}\varphi(\x) -\nabla_{x_n}V(x_n)\cdot\nabla_{x_n}\varphi(\x)+\Delta_{x_n}\varphi(\x)\Big]\,\rho^h(t,\x)d\x\,dt+ \int_{(k-1)h}^{kh}\theta_k(t)dt\nonumber.
\end{align}
Summing this relation from $k=1$ to $K_h$ gives
\begin{align}
&\sum_{k=1}^{K_h}\int_{(k-1)h}^{kh}\int_{\R^{dn}}\left(\frac{\rho_k^h(\x)-\rho_{k-1}^h(\x)}{h}\right)\varphi(t,\x)d\x dt\nonumber
\\&\quad=\int_0^T\int_{\R^{dn}}\Big[\sum_{i=2}^{n}x_i\cdot\nabla_{x_{i-1}}\varphi(\x) -\nabla_{x_n}V(x_n)\cdot\nabla_{x_n}\varphi(\x)+\Delta_{x_n}\varphi(\x)\Big]\,\rho^h(t,\x)d\x\,dt+ R_h,\label{approximationsum}
\end{align}
where
\begin{equation}
R_h=\sum_{k=1}^{K_h}\int_{(k-1)h}^{kh}\theta_k(t)dt.\label{Rh}
\end{equation}
By a discrete integration by parts, the left hand side of~\eqref{approximationsum} can be written as
\begin{align}
&-\int_0^h\int_{\R^{dn}}\rho_0(\x)\frac{\varphi(t,\x)}{h}d\x dt+\int_0^T\int_{\R^{dn}}\rho^h(t,\x)\left(\frac{\varphi(t,\x)-\varphi(t+h,\x)}{h}\right)d\x dt\label{approximationsumFINAL}.
\end{align}
From~\eqref{approximationsum} and~\eqref{approximationsumFINAL} we obtain
\begin{align}
&\int_0^T\int_{\R^{dn}}\rho^h(t,\x)\left(\frac{\varphi(t,\x)-\varphi(t+h,\x)}{h}\right)d\x dt\nonumber
\\&=\int_0^T\int_{\R^{dn}}\Big[\sum_{i=2}^{n}x_i\cdot\nabla_{x_{i-1}}\varphi(\x) -\nabla_{x_n}V(x_n)\cdot\nabla_{x_n}\varphi(\x)+\Delta_{x_n}\varphi(\x)\Big]\,\rho^h(t,\x)d\x\,dt\nonumber
\\&\qquad+\int_0^h\int_{\R^{dn}}\rho_0(\x)\frac{\varphi(t,\x)}{h}d\x dt+R_h. \label{weakconvergence}
\end{align}
Next we show that $R_h\rightarrow 0$ as $h\to0$. Indeed, we have
\begin{eqnarray}
|R_h|\stackrel{\eqref{Rh}}\leq \sum_{k=1}^{K_h}\int_{(k-1)h}^{kh}|\theta_k(t)|dt&\stackrel{\eqref{thetaform}}\leq& C\sum_{k=1}^{K_h}\int_{(k-1)h}^{kh}\left(\frac{1}{h}\W_h(\rho_{k-1}^h,\rho_k^h)+h\right)dt\nonumber
\\&=& C\sum_{k=1}^{K_h}\big[\W_h(\rho_{k-1}^h,\rho_k^h)+Ch^2\big]
\stackrel{\eqref{summetric}}\leq Ch.\nonumber
\end{eqnarray}
Taking the limit $h\rightarrow0$ in~\eqref{weakconvergence} yields equation~\eqref{weakKReqn} proving \eqref{weaklyconverence}.

The proof of the stronger convergence~\eqref{pointwiseconvergence} and of the continuity~\eqref{intinialconvergence} at $t=0$ follows from the equi-near-continuity estimate, see \cite{JKO98,Hua00,DPZ13b}
\[
W_2\big(\rho^h(t_1),\rho^h(t_2)\big)^2 \leq C(|t_2-t_1| + h),
\]
where $W_2(\rho_0,\rho_1)$ is the Wasserstein distance between $\rho_0$ and $\rho_1$ defined in \eqref{eq: W2 distance}. This estimate follows from the inequality (see~\eqref{ineqs:qpC})
\[
|\x-\y|^2 \leq C\big[\Cc_h(\x,\y) + h^2(|\x|^2+|\y|^2)\big],
\]
and the estimates~\eqref{sumM2Entropy} and~\eqref{summetric}.
\section{Appendix}
\label{sec: App}
This appendix contains proofs of technical lemmas in the previous sections.
\subsection{Proof of Proposition \ref{prop: aux}}
In this appendix, we prove Proposition \ref{prop: aux}. For the convenience, we recall the relevant matrices here:
\begin{align}
&H_{1}(t):=\mathrm{diag}(1,t,\cdots,t^{n-1})\quad \text{and}\quad H_{2}(t):=\begin{pmatrix}1 & t & \frac{t^{2}}{2!} & \frac{t^{3}}{3!} & \cdots & \frac{t^{n-1}}{(n-1)!}\\
 & t & t^{2} & \frac{t^{3}}{2!} & \cdots & \frac{t^{n-1}}{(n-2)!}\\
 &  & t^{2} & \frac{t^{3}}{1!} & \cdots & \frac{t^{n-1}}{(n-3)!}\\
 &  &  & \ddots & \cdots & \vdots\\
 &  &  &  &  & t^{n-1}
\end{pmatrix},\label{eq: H_1 and H_2}
\\&Q:=\begin{pmatrix}0\\
1 & 0\\
 & 1 & 0\\
 &  & \ddots & \ddots\\
 &  &  & 1 & 0
\end{pmatrix} \quad\text{and}\quad D:=\mathrm{diag}(0,\ldots,0,1).\label{eq: Q and D}
\end{align}

We need several auxiliary lemmas. The first one is an explicit formula for the inverse of the matrix $B$ define in \eqref{B}.

%

%
%
%
%
%
%
%
%

\begin{lemma}
Recall the matrix $B$ in \eqref{B}
\begin{equation*}
B_{ki}=\begin{cases}
(-1)^{n-k}\frac{(n+i-1)!}{(k+i-n-1)!}, & \quad\text{if}\quad k+i\ge n+1\\
0 & \quad\text{if}\quad k+i<n+1.
\end{cases}
\end{equation*}
Then $B^{-1}$ has the following form
\begin{equation}
\label{eq: inverse B}
(B^{-1})_{ki}=\begin{cases}
(-1)^{k-1}\frac{1}{(n+k-1)!(n+1-k-i)!}, &\quad\text{if}\quad k+i\le n+1\\
0 & \quad\text{if}\quad k+i>n+1.
\end{cases}
\end{equation}
\end{lemma}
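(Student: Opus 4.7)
The plan is to verify directly that $B \cdot B^{-1} = I$ by computing the $(k,j)$-entry of the product and reducing to the standard binomial identity $(1-1)^N = 0$ for $N \geq 1$.

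First I would analyze the support of the summand in $(BB^{-1})_{kj} = \sum_{i=1}^n B_{ki}(B^{-1})_{ij}$. The factor $B_{ki}$ vanishes unless $i \geq n+1-k$, and the factor $(B^{-1})_{ij}$ vanishes unless $i \leq n+1-j$. Hence the range of $i$ contributing to the sum is $n+1-k \leq i \leq n+1-j$, which is empty precisely when $j > k$. This immediately handles the strictly upper-triangular entries of $BB^{-1}$.

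Next, for $j \leq k$, I would substitute the two explicit formulas. The crucial cancellation is that the factor $(n+i-1)!$ appearing in $B_{ki}$ exactly cancels the one in the denominator of $(B^{-1})_{ij}$. After pulling out the constant sign $(-1)^{n-k-1}$ and making the change of summation variable $m := i-(n+1-k)$ (so $k+i-n-1 = m$ and $n+1-i-j = (k-j)-m$, with $m$ ranging over $0,\ldots,k-j$), the sum collapses to
\begin{equation*}
(BB^{-1})_{kj} \;=\; \frac{1}{(k-j)!}\sum_{m=0}^{k-j}(-1)^m\binom{k-j}{m} \;=\; \frac{(1-1)^{k-j}}{(k-j)!},
\end{equation*}
after the sign prefactors collapse (the accumulated sign $(-1)^{2(n-k)} = 1$). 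This equals $1$ when $k=j$ and $0$ when $k>j$, giving $(BB^{-1})_{kj} = \delta_{kj}$ as required.

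The only delicate point is bookkeeping the signs and index ranges, which is essentially mechanical: the $(-1)^{n-k}$ from $B_{ki}$ combines with the $(-1)^{i-1}$ from $(B^{-1})_{ij}$ and then with the $(-1)^{m+n+1-k}$ coming from re-expressing $(-1)^i$ in terms of $m$, and these all cancel. There is no genuine obstacle beyond this careful algebra, and a symmetric check of $B^{-1}B = I$ (if one prefers) would proceed identically. Since $B$ is square, verifying $BB^{-1} = I$ is enough to conclude that the matrix displayed in \eqref{eq: inverse B} is indeed the inverse of $B$.
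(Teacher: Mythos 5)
Your proof is correct and follows essentially the same strategy as the paper: verify $BB^{-1}=I$ entry-by-entry, use the triangular support of $B$ and the proposed inverse to restrict the summation range, cancel the $(n+i-1)!$ factor, reindex, and invoke $(1-1)^{k-j}=0$. The only cosmetic difference is that you absorb the diagonal case $k=j$ into the same formula (reading $\frac{(1-1)^0}{0!}=1$), whereas the paper treats $k<j$, $k=j$, $k>j$ as three explicit cases; the content is identical.
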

\begin{proof}
Define the matrix $\tilde{B}$ by
\begin{equation}
\tilde{B}_{ki}=\begin{cases}
(-1)^{k-1}\frac{1}{(n+k-1)!(n+1-k-i)!}, &\quad\text{if}\quad k+i\le n+1\\
0 & \quad\text{if}\quad k+i>n+1.
\end{cases}
\end{equation}
We now show that $B\tilde{B}=I$. We consider three cases
\begin{enumerate}
\item If $k<j$, then 
\[
\sum\limits_{i=1}^{n}B_{ki}\tilde{B}_{ij}=\sum\limits_{i=n+1-k}^{n}B_{ki}\tilde{B}_{ij}=0.
\]
The last equality is $0$ since by the definition of $\tilde{B}$ we have $\tilde{B}_{ij}=0$ for every $i\geq n+1-k>n+1-j$.
\item If $k=j$, then similarly as the case above, we get
\begin{align*}
\sum\limits_{i=1}^{n}B_{ki}\tilde{B}_{ij}&=B_{k,n+1-i}\tilde{B}_{n+1-j,j}
\\&=(-1)^{n-k}(n+i-1)!\times (-1)^{n-j}\frac{1}{(n+i-1)!}=1.
\end{align*}
\item If $k>j$, then since $B_{ki}=0$ for $i<n+1-k$ and $\tilde{B}_{ij}=0$ for $i>n+1-j$, we get
\begin{align*}
\sum_{i=1}^{n}B_{ki}\tilde{B}_{ij}  & =\sum\limits_{\substack{1\leq i\leq n\\i\geq n+1-k\\i\leq n+1-j}}B_{ki}\tilde{B}_{ij}\\
 & =\sum_{i=n+1-k}^{n+1-j}B_{ki}(\tilde{B})_{ij}\\
 & =\sum_{i=n+1-k}^{n+1-j}(-1)^{n-k}\frac{(n+i-1)!}{(k+i-n-1)!}(-1)^{i-1}\frac{1}{(n+i-1)!(n+1-i-j)!}\\
 & =(-1)^{n-1}\sum_{i=n+1-k}^{n+1-j}(-1)^{i-k}\frac{1}{(k+i-n-1)!(n+1-i-j)!}\\
 & =(-1)^{n-1}\sum_{l=0}^{k-j}(-1)^{i-k}\frac{1}{l!(n+1-(l+n+1-k)-j)!}(\text{ change variable: }i=l+n+1-k)\\
 & =(-1)^{n-1}\frac{1}{(k-j)!}\sum_{l=0}^{k-j}(-1)^{l+n+1-k-k}\frac{(k-j)!}{l!(k-j-l)!}\\
 & =(-1)^{n-1}(-1)^{n+1-k-k}\frac{1}{(k-j)!}\sum_{l=0}^{k-j}(-1)^{l}\frac{(k-j)!}{l!(k-j-l)!}\\
 & =(1-1)^{k-j}=0.
\end{align*}
\end{enumerate}
From these three cases, we imply that $B\tilde{B}=I$, therefore $B^{-1}=\tilde{B}$ which concludes the proof of the lemma.
\end{proof}
The next two lemmas show relations between the matrices $H_1, H_2, Q$ and $D$.
\begin{lemma} 
\label{lem: Ho}
It holds that
\begin{equation}
\label{eq: H0}
(H_2DH_2^T)_{ij}=t^{2(n-1)} (H_0)_{ij}, \quad \text{where}\quad (H_0)_{ij}=\frac{1}{(n-i)!(n-j)!}.
\end{equation}
\end{lemma}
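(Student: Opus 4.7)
The statement is a straightforward consequence of the fact that $D=\mathrm{diag}(0,\dots,0,1)$ is the rank-one projection onto the $n$-th coordinate, so $H_2 D H_2^T$ is just an outer product of the last column of $H_2$ with itself.

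The plan is as follows. Write $D = e_n e_n^T$, where $e_n$ is the standard unit vector with a $1$ in position $n$. Then
\begin{equation*}
H_2 D H_2^T = (H_2 e_n)(H_2 e_n)^T,
\end{equation*}
so $(H_2 D H_2^T)_{ij} = (H_2)_{in}(H_2)_{jn}$. From the explicit form of $H_2$ recalled in \eqref{eq: H_1 and H_2}, the last column of $H_2$ has entries $(H_2)_{in} = \dfrac{t^{n-1}}{(n-i)!}$ for $1\le i\le n$ (in particular $(H_2)_{nn}=t^{n-1}$, and $(H_2)_{1n}=\frac{t^{n-1}}{(n-1)!}$, matching the first and last rows shown in \eqref{eq: H_1 and H_2}).

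Multiplying these two scalars together yields
\begin{equation*}
(H_2 D H_2^T)_{ij} = \frac{t^{n-1}}{(n-i)!}\cdot\frac{t^{n-1}}{(n-j)!} = \frac{t^{2(n-1)}}{(n-i)!(n-j)!} = t^{2(n-1)}(H_0)_{ij},
\end{equation*}
which is exactly the desired identity. No combinatorial identity or inversion of $B$ is needed here; the only step requiring any care is reading off the correct entries of the last column of $H_2$ from the displayed formula, which is routine. I do not anticipate any real obstacle in this lemma; its role is to serve as a convenient identification used later (for example, in simplifying expressions involving $H_2^T M H_2 D$ or in computing $\mathrm{Tr}(D H_2^T M H_2)$ via the cyclic property of the trace).
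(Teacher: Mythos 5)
Your proof is correct and follows essentially the same route as the paper's: the paper expands $(H_2 D H_2^T)_{ij}=\sum_{k,l}(H_2)_{ik}D_{kl}(H_2)_{jl}$ and notes only the $k=l=n$ term survives, which is exactly the content of your $D=e_n e_n^T$ outer-product observation, and both then read off $(H_2)_{in}=t^{n-1}/(n-i)!$ from the last column of $H_2$.
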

\begin{proof}
For three matrices $X,\,Y,\,Z$, we have
\[
(XYZ)_{ij}=\sum\limits _{kl}X_{ik}Y_{kl}Z_{lj}.
\]
In particular, applying this identity for $X=H_2, Y=D$ and $Z=H_2^T$, we get
\begin{align*}
(H_0)_{ij}&=(H_{2}DH_{2}^{T})_{ij} =\sum\limits _{k,l}(H_{2})_{il}D_{kl}(H_{2}^{T})_{lj}
\\&=\sum\limits _{k,l}(H_{2})_{il}D_{kl}(H_{2})_{jl}\\
 & \overset{(*)}{=}(H_{2})_{in}(H_{2})_{jn}
 \\&\overset{(**)}{=}\frac{t^{n-1}}{(n-i)!}\frac{t^{n-1}}{(n-j)!}=\frac{t^{2(n-1)}}{(n-i)!(n-j)!},
\end{align*}
where we used the definition of $D$ and $H_2$ to obtain $(*)$ and $(**)$ respectively. This finishes the proof of the lemma.
\end{proof}
\begin{lemma} 
\label{lem: diag relation}
It holds that
\begin{equation}
[(2n-1)I-2t(H_{2}^{T})^{-1}(H_{2}')^{T}+2t(H_{2}^{T})^{-1}QH_{2}^{T}]=\mathrm{diag}(2n-1,2n-3,\ldots,1).\label{eq34}
\end{equation}
\end{lemma}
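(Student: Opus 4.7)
The plan is to exploit the fact that $H_2(t)$ has a clean matrix-exponential factorization. Writing $(H_2)_{ij} = \frac{t^{j-1}}{(j-i)!} = t^{i-1}\cdot \frac{t^{j-i}}{(j-i)!}$ for $j\geq i$, one recognizes the factorization
$$
H_2(t) = D_1(t)\,\exp(tQ^T), \qquad D_1(t)=\mathrm{diag}(1,t,t^2,\ldots,t^{n-1}),
$$
where $Q^T$ is the nilpotent upper-shift (so that $(Q^T)^k$ has $1$'s on the $k$-th super-diagonal and $\exp(tQ^T)$ has the required entries $t^{j-i}/(j-i)!$). This is the only non-routine step; everything downstream is algebraic manipulation.

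Taking transposes, $H_2^T = \exp(tQ)\,D_1$ and $(H_2^T)^{-1} = D_1^{-1}\exp(-tQ)$. The second ingredient is the conjugation identity
$$
D_1^{-1}\,Q\,D_1 = \frac{1}{t}\,Q,
$$
which follows at once from $(D_1^{-1}QD_1)_{ij} = t^{j-i}Q_{ij}$ together with the fact that $Q$ is supported on positions with $j=i-1$.

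With these in hand, I would compute the two non-trivial matrices separately. Differentiating $H_2^T = \exp(tQ)D_1$ gives $(H_2')^T = \exp(tQ)(D_1' + Q D_1)$, hence
$$
(H_2^T)^{-1}(H_2')^T = D_1^{-1}D_1' + D_1^{-1}QD_1 = \frac{1}{t}(E+Q),
$$
where $E := D_1^{-1}D_1' \cdot t = \mathrm{diag}(0,1,2,\ldots,n-1)$. Similarly, because $Q$ commutes with $\exp(\pm tQ)$,
$$
(H_2^T)^{-1}\,Q\,H_2^T = D_1^{-1}\exp(-tQ)\,Q\,\exp(tQ)\,D_1 = D_1^{-1}QD_1 = \frac{1}{t}\,Q.
$$

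Substituting these into the left-hand side of the claimed identity, the two $Q$-contributions cancel:
$$
(2n-1)I - 2t\cdot\frac{E+Q}{t} + 2t\cdot\frac{Q}{t} = (2n-1)I - 2E = \mathrm{diag}(2n-1,2n-3,\ldots,1),
$$
as required. The main obstacle is really just recognizing the exponential factorization of $H_2$; once that is established, the conjugation identity and the commutativity of $Q$ with $\exp(tQ)$ do all the work, and the cancellation $-2Q+2Q=0$ is what produces a diagonal matrix on the right.
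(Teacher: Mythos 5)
Your proof is correct, and it takes a genuinely different route from the paper's. The paper proves the identity entry-by-entry: it writes out $(T_{23})_{ij}$ explicitly using the formulas for the entries of $(H_2^T)^{-1}$, $(H_2')^T$ and $Q$, then splits into the three cases $j>i$, $j<i$, $j=i$ and evaluates the resulting sums by combinatorial identities (differentiating $(x-1)^{i-j}$, expanding $(1-1)^{i-j}$, and so on). Your argument replaces all of this with the single structural observation $H_2(t) = D_1(t)\exp(tQ^T)$, after which $(H_2^T)^{-1}(H_2')^T$ and $(H_2^T)^{-1}QH_2^T$ are computed in closed form by elementary conjugation and the commutativity of $Q$ with $\exp(\pm tQ)$; the cancellation $-2Q+2Q$ then makes the diagonal structure transparent rather than emerging from a case analysis. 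Your approach is shorter, explains \emph{why} the result is diagonal, and avoids needing the explicit formula for $(H_2^T)^{-1}$ altogether; the paper's approach is more self-contained in the sense that it reuses the same entry-wise combinatorial machinery already developed for the neighbouring lemmas. Both are valid; yours is the more conceptual.

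Two small checks worth making explicit in a writeup: first, that the conjugation $D_1^{-1}QD_1 = t^{-1}Q$ holds precisely because $Q$ is supported on the single subdiagonal $j=i-1$ (so the scaling factor $t^{j-i}$ is the constant $t^{-1}$), and second, that $D_1^{-1}D_1' = t^{-1}\,\mathrm{diag}(0,1,\ldots,n-1)$, so your $E$ is indeed $\mathrm{diag}(0,\ldots,n-1)$ and $(2n-1)I-2E = \mathrm{diag}(2n-1,2n-3,\ldots,1)$ as required.
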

\begin{proof}
Let $T_{23}:=[-2t(H_{2}^{T})^{-1}(H_{2}')^{T}+2t(H_{2}^{T})^{-1}QH_{2}^{T}]$. Then
\begin{align}
(T_{23})_{ij} & =-2t\sum_{k=1}^{n}(H_{2}^{T})_{ik}^{-1}(H_{2}')_{kj}^{T}+2t\sum_{k,l}(H_{2}^{T})_{ik}^{-1}Q_{kl}(H_{2}^{T})_{lj}\nonumber \\
 & =-2t\sum_{k=1}^{n}(H_{2}^{T})_{ik}^{-1}(H_{2}')_{kj}^{T}+2t\sum_{l,k:k=l+1}(H_{2}^{T})_{ik}^{-1}(H_{2}^{T})_{lj}\nonumber \\
 & =-2t\sum_{k=1}^{n}(H_{2}^{T})_{ik}^{-1}(H_{2}')_{kj}^{T}+2t\sum_{k=1}^{n-1}(H_{2}^{T})_{i,k+1}^{-1}(H_{2}^{T})_{kj}\nonumber.
\end{align}
Note that $(H_{2}^{T})_{ik}^{-1}(H_{2}')_{kj}^{T}\ne0$ iff $i\ge k,k\ge j$ and
$(H_{2}^{T})_{i,k+1}^{-1}(H_{2}^{T})_{kj}\ne0$ iff $i\ge k+1,k\ge j$. We transform further $(T_{23})_{ij}$.
\begin{enumerate}
\item If $j>i$ then $(H_{2}^{T})_{ik}^{-1}(H_{2}')_{kj}^{T}=(H_{2}^{T})_{i,k+1}^{-1}(H_{2}^{T})_{kj}=0$. This gives that $(T_{23})_{ij} =0$.
\item If $j<i$, then we have
\begin{align*}
(T_{23})_{ij} & =-2t\sum_{k=j}^{i}(H_{2}^{T})_{ik}^{-1}(H_{2}')_{kj}^{T}+2t\sum_{k=j}^{i}(H_{2}^{T})_{i,k+1}^{-1}(H_{2}^{T})_{kj}\\
 & =-2t\sum_{k=j}^{i}\frac{(-1)^{i-k}}{t^{k-1}(i-k)!}\frac{(k-1)t^{k-2}}{(k-j)!}+2t\sum_{k=j}^{i-1}\frac{(-1)^{i-(k+1)}}{t^{k+1-1}(i-k-1)!}\frac{t^{k-1}}{(k-j)!}\\
 & =-2\sum_{k=j}^{i}\frac{(-1)^{i-k}}{(i-k)!}\frac{(k-1)}{(k-j)!}-2\sum_{k=j}^{i-1}\frac{(-1)^{i-k}}{(i-k-1)!}\frac{1}{(k-j)!}\\
 & =-2\sum_{k=j}^{i}\frac{(-1)^{i-k}}{(i-k)!}\frac{(k-1)}{(k-j)!}-2\sum_{k=j}^{i-1}\frac{(-1)^{i-k}}{(i-k-1)!}\frac{1}{(k-j)!}\\
 & =-2\sum_{l=0}^{i-j}\frac{(-1)^{i-j-l}}{(i-j-l)!}\frac{(l+j-1)}{l!}-2\sum_{l=0}^{i-1-j}\frac{(-1)^{i-(j-l)}}{(i-j-l-1)!}\frac{1}{l!}
 \\&=(I)+(II).
\end{align*}
Consider the function $g(x)=(x-1)^{i-j}$. On the one hand, since $g'(x)=(i-j)(x-1)^{i-j}$, it follows that $g'(1)=0$. On the other hand, we have
\begin{equation*}
g'(x) =\sum_{l=0}^{i-j}\frac{lx^{l-1}(-1)^{i-j-l}}{(i-j-l)!l!},
\end{equation*}
which implies that
\begin{equation}
\label{eq: I1}
g'(1)=\sum_{l=0}^{i-j}\frac{l(-1)^{i-j-l}}{(i-j-l)!l!}=0.
\end{equation}
In addition, we have 
\begin{equation}
\label{eq: I2}
(j-1)\sum_{l=0}^{i-j}\frac{(-1)^{i-j-l}}{(i-j-l)!}\frac{1}{l!}=(j-1)(1+(-1))^{i-j}=0.
\end{equation}
Summing up \eqref{eq: I1} and \eqref{eq: I2} yields $(I)=0$. The term $(II)$ is equal to
\[
(II)=\frac{1}{(i-j-1)!}(1-1)^{i-j-1}=0.
\]
Hence if $j<i$, then $(T_{23})_{ij}=0$.
\item Finally, if $i=j$, then we have
\begin{align*}
(T_{23})_{ii} & =-2t(H_{2}^{T})_{ii}^{-1}(H_{2}')_{ii}^{T}+2t(H_{2}^{T})_{i,i+1}^{-1}(H_{2}^{T})_{ii}\\
 & =-2t\frac{1}{t^{i-1}}(i-1)t^{i-2}\\
 & =-2(i-1).
\end{align*}
\end{enumerate}
From these three cases we obtain that
\[
T_{23}=-2t(H_{2}^{T})^{-1}(H_{2}')^{T}+2t(H_{2}^{T})^{-1}QH_{2}^{T}=-2\mathrm{diag}(0,1,\ldots, n-1).
\]
As a result, we have
\begin{equation}
[(2n-1)I-2t(H_{2}^{T})^{-1}(H_{2}')^{T}+2t(H_{2}^{T})^{-1}QH_{2}^{T}]=\mathrm{diag}(2n-1,2n-3,\ldots,1)\label{eq34-1},
\end{equation}
which is \eqref{eq34}. This finishes the proof of the lemma.
\end{proof}
The following lemma is a purely combinatoric identity. We expect that this identity is not new, but we include a proof here for the sake of completeness.
\begin{lemma}
\label{lem: comb identity}
It holds that
\begin{eqnarray}
\sum_{j=0}^{k}\left(\begin{array}{c}
n\\
j
\end{array}\right)(-1)^{j} & =(-1)^{k} & \left(\begin{array}{c}
n-1\\
k
\end{array}\right)\label{eq3-1}
\end{eqnarray}
\end{lemma}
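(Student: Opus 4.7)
The plan is to prove the identity by a telescoping argument based on Pascal's rule, since the right-hand side already suggests a partial sum formula.

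First I would apply Pascal's identity $\binom{n}{j} = \binom{n-1}{j} + \binom{n-1}{j-1}$ to the summand. Multiplying by $(-1)^j$ gives
\begin{equation*}
(-1)^{j}\binom{n}{j} = (-1)^{j}\binom{n-1}{j} - (-1)^{j-1}\binom{n-1}{j-1},
\end{equation*}
so the terms of the sum form a telescoping sequence. Summing from $j=0$ to $j=k$, with the convention $\binom{n-1}{-1}=0$, all intermediate terms cancel and we are left with
\begin{equation*}
\sum_{j=0}^{k}(-1)^{j}\binom{n}{j} = (-1)^{k}\binom{n-1}{k} - (-1)^{-1}\binom{n-1}{-1} = (-1)^{k}\binom{n-1}{k},
\end{equation*}
as required.

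As a backup (and cross-check), one could alternatively proceed by induction on $k$. The base case $k=0$ is immediate since both sides equal $1$. For the inductive step, assuming the identity holds for $k$, one computes
\begin{equation*}
\sum_{j=0}^{k+1}(-1)^{j}\binom{n}{j} = (-1)^{k}\binom{n-1}{k} + (-1)^{k+1}\binom{n}{k+1},
\end{equation*}
and the desired equality $(-1)^{k+1}\binom{n-1}{k+1}$ reduces, after dividing by $(-1)^{k+1}$, exactly to Pascal's rule $\binom{n}{k+1} = \binom{n-1}{k+1} + \binom{n-1}{k}$. There is no real obstacle here; the only minor point to be careful about is the boundary convention $\binom{n-1}{-1}=0$ in the telescoping argument, which is standard.
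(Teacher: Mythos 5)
Your proof is correct. Your backup induction argument is essentially verbatim the paper's proof, which proceeds by induction on $k$ with the inductive step reducing to Pascal's rule. Your primary telescoping argument is a mild repackaging of the same computation: once you write $(-1)^j\binom{n}{j}=a_j-a_{j-1}$ with $a_j=(-1)^j\binom{n-1}{j}$, summing and recognizing the telescope is exactly what the induction does one step at a time, just presented in closed form. The telescoping version is arguably cleaner since it exposes the closed-form partial sum directly rather than guessing it and then verifying; but there is no substantive difference in content, and both hinge on the same use of Pascal's identity and the boundary convention $\binom{n-1}{-1}=0$.
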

\begin{proof}
We prove \eqref{eq3-1} by induction. Obviously, (\ref{eq3-1}) is correct for $k=0$. Assume that we (\ref{eq3-1}) is valid with some value of $k$.

We will prove (\ref{eq3-1}) is also valid for $k+1$. We have
\begin{align*}
\sum_{j=0}^{k+1}\left(\begin{array}{c}
n\\
j
\end{array}\right)(-1)^{j} & =\sum_{j=0}^{k}\left(\begin{array}{c}
n\\
j
\end{array}\right)(-1)^{j}+\left(\begin{array}{c}
n\\
k+1
\end{array}\right)(-1)^{k+1}\\
 & =(-1)^{k}\left(\begin{array}{c}
n-1\\
k
\end{array}\right)+\left(\begin{array}{c}
n\\
k+1
\end{array}\right)(-1)^{k+1}\\
 & =(-1)^{k}\left(\left(\begin{array}{c}
n-1\\
k
\end{array}\right)-\left(\begin{array}{c}
n\\
k+1
\end{array}\right)\right)\\
 & =(-1)^{k+1}\left(\begin{array}{c}
n-1\\
k+1
\end{array}\right),
\end{align*}
where in the last equality we have used the fact that $\left(\begin{array}{c}
n-1\\
k
\end{array}\right)+\left(\begin{array}{c}
n-1\\
k+1
\end{array}\right)=\left(\begin{array}{c}
n\\
k+1
\end{array}\right)$. This is \eqref{eq3-1} for $k+1$. Therefore by the induction principle, (\ref{eq3-1}) is proved.
\end{proof}
\begin{lemma} For all $j\leq k $ and $k-j\le n$, we have
\begin{align}
\sum_{i=1}^{j}\frac{(-1)^{i+j}}{(i-1)!(j-i)!}\frac{(n+i-1)!}{(n+i-k)!} & =\begin{pmatrix}k-1\\
j-1
\end{pmatrix}\frac{n!}{(n-(k-j))!}.\label{eq2}
\end{align}
\end{lemma}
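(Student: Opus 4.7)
The plan is to recognize the left-hand side as a finite difference of a binomial coefficient evaluated at zero, which is exactly the kind of sum that collapses via Pascal's rule. First I would convert the falling factorial into a binomial coefficient via
\[
\frac{(n+i-1)!}{(n+i-k)!}=(k-1)!\binom{n+i-1}{k-1},
\]
with the standard convention that $\binom{N}{k-1}=0$ when $N<k-1$ (this takes care of the edge cases corresponding to $B_{ki}=0$). Simultaneously, I would factor $\frac{1}{(i-1)!(j-i)!}=\frac{1}{(j-1)!}\binom{j-1}{i-1}$ and use $(-1)^{i+j}=(-1)^{j-i}$, which after the substitution $r=i-1$ brings the left-hand side into the shape
\[
\frac{(k-1)!}{(j-1)!}\sum_{r=0}^{j-1}(-1)^{(j-1)-r}\binom{j-1}{r}\binom{n+r}{k-1}.
\]

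The next step is the crux. I would interpret the inner sum as an iterated forward difference: if $\Delta g(r):=g(r+1)-g(r)$, then for any $g$,
\[
\sum_{r=0}^{N}(-1)^{N-r}\binom{N}{r}g(r)=\Delta^{N}g(0).
\]
Applying this with $N=j-1$ and $g(r)=\binom{n+r}{k-1}$, I would compute the basic difference via Pascal's identity
\[
\Delta\binom{n+r}{k-1}=\binom{n+r+1}{k-1}-\binom{n+r}{k-1}=\binom{n+r}{k-2},
\]
and then iterate (a brief induction on $N$) to obtain
\[
\Delta^{N}\binom{n+r}{k-1}=\binom{n+r}{k-1-N},\qquad \text{hence at }r=0:\ \binom{n}{k-j}.
\]

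Combining these steps gives
\[
\text{LHS}=\frac{(k-1)!}{(j-1)!}\binom{n}{k-j}=\frac{(k-1)!}{(j-1)!(k-j)!}\cdot\frac{n!}{(n-(k-j))!}=\binom{k-1}{j-1}\frac{n!}{(n-(k-j))!},
\]
which is exactly the right-hand side. The hypothesis $k-j\leq n$ is used here to guarantee that the binomial coefficient $\binom{n}{k-j}$ is nondegenerate; the hypothesis $j\leq k$ is used to ensure $k-1-N=k-j\geq 0$ so that the difference calculus terminates at a genuine binomial coefficient rather than zero.

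I expect no serious obstacle: the only point that deserves care is legitimizing the termwise rewriting $\frac{(n+i-1)!}{(n+i-k)!}=(k-1)!\binom{n+i-1}{k-1}$ for small $i$ (where $n+i-k$ may be negative), which is resolved once one agrees that both sides are zero there. The rest is a textbook application of the Pascal recursion, and avoids any induction on $j$ of the kind used for Lemma~\ref{lem: comb identity}.
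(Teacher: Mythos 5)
Your proof is correct, and it follows a genuinely different route from the paper's. The paper establishes \eqref{eq2} by computing $f^{(k-1)}(1)$ for $f(x)=x^n(1-x)^{j-1}$ in two ways: expanding $(1-x)^{j-1}$ binomially and differentiating term-by-term gives (after the sign normalization) the left-hand side, while the Leibniz product rule gives the right-hand side. You instead stay entirely within discrete combinatorics: after normalizing factorials into binomial coefficients, you recognize the alternating sum as the $(j-1)$-st forward difference $\Delta^{j-1}\binom{n+r}{k-1}\big|_{r=0}$ and collapse it via a single iterated application of Pascal's rule to $\binom{n}{k-j}$. The two methods are morally dual — finite differences of a polynomial sequence play the role that derivatives do in the paper — but yours dispenses with differentiation and the Leibniz rule altogether, so it is arguably more elementary and more transparently combinatorial; the paper's version has the minor advantage of being structurally parallel to the other generating-function-style computations elsewhere in Appendix~\ref{sec: App}. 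One small point you handled correctly but that merits the care you gave it: the termwise rewriting $\frac{(n+i-1)!}{(n+i-k)!}=(k-1)!\binom{n+i-1}{k-1}$ must be read with the falling-factorial convention so that both sides vanish when $n+i-1<k-1$; this is consistent with the vanishing of $B_{ki}$ in \eqref{B}, so no terms are silently dropped.
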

\begin{proof}
The identity \eqref{eq2} has been used and proved in our previous paper \cite[Proof of Theorem 4.1]{DuongTran16}. For the convenience of the reader we provide a proof here.
Consider the function $f(x)=x^{n}(1-x)^{j-1}$. On the one hand, we have 
\begin{align*}
f(x) & =x^{n}\sum_{i=0}^{j-1}\begin{pmatrix}j-1\\
i
\end{pmatrix}(-1)^{i}x^{i}\\
 & =\sum_{i=0}^{j-1}\begin{pmatrix}j-1\\
i
\end{pmatrix}(-1)^{i}x^{n+i}\\
 & =\sum_{i=1}^{j}\begin{pmatrix}j-1\\
i-1
\end{pmatrix}(-1)^{i-1}x^{n+i-1}.
\end{align*}
Therefore,
\begin{align*}
f(x)^{(k-1)} & =\sum_{i=1}^{j}\begin{pmatrix}j-1\\
i-1
\end{pmatrix}(-1)^{i-1}\frac{(n+i-1)!}{(n+i-1-(k-1))!}x^{n+i-1-(k-1)}\\
 & =\sum_{i=1}^{j}\frac{(j-1)!}{(j-i)!(i-1)!}(-1)^{i-1}\frac{(n+i-1)!}{(n+i-k)!}x^{n+i-1-(k-1)}.
\end{align*} 
It follows that
\begin{equation*}
\sum_{i=1}^{j}\frac{(-1)^{i}}{(i-1)!(j-i)!}\frac{(n+i-1)!}{(n+i-k)!}=\frac{-1}{(j-1)!}f(1)^{(k-1)},
\end{equation*}
and by multiplying both sides of this equality with $(-1)^j$, we get
\begin{align}
\label{eq: id1}
\sum_{i=1}^{j}\frac{(-1)^{i+j}}{(i-1)!(j-i)!}\frac{(n+i-1)!}{(n+i-k)!} & =\frac{(-1)^{j+1}}{(j-1)!}f(1)^{(k-1)}.
\end{align}
On the other hand, according to Leibniz formula, we have
\begin{align*}
f(x)^{(k-1)} & =[x^{n}(1-x)^{j-1}]^{(k-1)}\\
 & =\sum_{i=0}^{k-1}\begin{pmatrix}k-1\\
i
\end{pmatrix}[x^{n}]^{(k-1-i)}[(1-x)^{j-1}]^{(i)}.
\end{align*}
In particular, we get
\begin{align}
f(x)^{(k-1)}|_{x=1} & =\sum_{i=0}^{k-1}\begin{pmatrix}k-1\\
i
\end{pmatrix}[x^{n}]^{(k-1-i)}[(1-x)^{j-1}]^{(i)}|_{x=1}\nonumber\\
 & =\sum_{i=0}^{j-1}\begin{pmatrix}k-1\\
i
\end{pmatrix}[x^{n}]^{(k-1-i)}[(1-x)^{j-1}]^{(i)}|_{x=1}\nonumber\\
 & \qquad+\sum_{i=j}^{k-1}\begin{pmatrix}k-1\\
i
\end{pmatrix}[x^{n}]^{(k-1-i)}[(1-x)^{j-1}]^{(i)}|_{x=1}\nonumber\\
 & =\begin{pmatrix}k-1\\
j-1
\end{pmatrix}[x^{n}]^{(k-1-(j-1))}(-1)^{j-1}[(1-x)^{j-1}]^{(j-1)}|_{x=1}\nonumber\\
 & =\begin{pmatrix}k-1\\
j-1
\end{pmatrix}\frac{n!}{(n-(k-j)!)}x^{n-(k-j)}(j-1)!(-1)^{j-1}|_{x=1}\nonumber\\
 & =\begin{pmatrix}k-1\\
j-1
\end{pmatrix}\frac{n!}{(n-(k-j))!}(j-1)!(-1)^{j-1}.\label{eq: id2}
\end{align}
The identity \eqref{eq2} is then followed from \eqref{eq: id1}  and \eqref{eq: id2}.
\end{proof}
Having these lemmas, we are ready to prove Proposition \ref{prop: aux} in the following subsections.
\subsubsection{Proof of Proposition \ref{prop: aux}--(1): $T_{1}$ anti-symmetric}
In this section,we prove that $T_1$ is anti-symmetric where
\[
T_1=(2n-1)H_{1}^{T}MH_{1}-2t(H_{1}')^{T}MH_{1}-t^{2-2n}H_{1}^{T}MH_{2}DH_{2}^{T}MH_{1}.
\]
We have
\begin{align*}
 & t(H_{1}^{T})^{-1}(H_{1}')^{T}=\mathrm{diag}(0,1,...,n-1)\quad \text{and}\quad (2n-1)M=\mathrm{diag}(2n-1,\ldots,2n-1)M
\end{align*}
Therefore, we get
\begin{align*}
T_{1} &=(2n-1)H_{1}^{T}MH_{1}-2t(H_{1}')^{T}MH_{1}-t^{2-2n}H_{1}^{T}MH_{2}DH_{2}^{T}MH_{1}
\\&=H_{1}^{T}[(2n-1)M-2t(H_{1}^{T})^{-1}(H_{1}')^{T}M-t^{2-2n}MH_{2}DH_{2}^{T}M]H_{1}\\
& =H_{1}^{T}[\mathrm{diag}(2n-1,\ldots,3,1)M-t^{2-2n}MH_{2}DH_{2}^{T}M]H_{1}\\
 &\overset{(*)}{=}H_{1}^{T}[\mathrm{diag}(2n-1,\ldots,3,1)M-MH_{0}M]H_{1}\\
 & =H_{1}^{T}M[M^{-1}\mathrm{diag}(2n-1,\ldots,3,1)-H_{0}]MH_{1},
\end{align*}
where we have used Lemma \ref{lem: Ho} to obtain $(*)$. Let
\begin{align}
T_{11}:=M^{-1}\mathrm{diag}(2n-1,\ldots,3,1)-H_{0}=AB^{-1}\mathrm{diag}(2n-1,\ldots,3,1)-H_{0}. \label{eq1}
\end{align}
To prove $T_1$ is anti-symmetric, it suffices to prove that $T_{11}$ is anti-symmetric. Let $C_{ij}$ be the element $(i,j)$ of $T_{11}$. Then $C_{ij}$ can be computed as follows.
\begin{align}
C_{ij} & =(2(n-j)+1)\sum_{k}A_{ik}(B^{-1})_{kj}-(H_{0})_{ij}\nonumber\\
 & =(2(n-j)+1)\sum_{k+j\le n+1}A_{ik}(B^{-1})_{kj}-(H_{0})_{ij}\nonumber \\
 & =(2(n-j)+1)\sum_{\substack{1\le k\le n\\k+j\le n+1}}(i-1)!\begin{pmatrix}n+k-1\\
i-1
\end{pmatrix}(-1)^{k-1}\frac{1}{(n+k-1)!(n+1-k-j)!}-\frac{1}{(n-i)!(n-j)!}\nonumber \\
 & =-\frac{1}{(n-i)!(n-j)!}+(2(n-j)+1)\sum_{\substack{1\le k\le n\\k+j\le n+1}}(i-1)!\begin{pmatrix}n+k-1\\
i-1
\end{pmatrix}(-1)^{k-1}\frac{1}{(n+k-1)!(n+1-k-j)!}\nonumber \\
 & =-\frac{1}{(n-i)!(n-j)!}+(2(n-j)+1)\sum_{k=1}^{n+1-j}(i-1)!\frac{(n+k-1)!}{(i-1)!(n+k-i)!}(-1)^{k-1}\frac{1}{(n+k-1)!(n+1-k-j)!}\nonumber\\
 & =-\frac{1}{(n-i)!(n-j)!}+(2(n-j)+1)\sum_{k=1}^{n+1-j}\frac{(-1)^{k-1}}{(n+k-i)!(n+1-k-j)!}\nonumber\\
 & =-\frac{1}{(n-i)!(n-j)!}+(2(n-j)+1)\sum_{l=0}^{n-j}\frac{(-1)^{n-j-l}}{(2n+1-i-j-l)!l!},\label{eq:27}
\end{align}
where to obtain the last equality, we have changed variable $l:=n+1-k-j$.

Applying (\ref{eq3-1}) for $n=p+q+1$ ane $k=q$ we obtain
\begin{alignat}{2}
&&\sum_{l=0}^{q}\left(\begin{array}{c}
p+q+1\\
l
\end{array}\right)(-1)^{l} & =(-1)^{q}\left(\begin{array}{c}
p+q\\
q
\end{array}\right)\nonumber\\
&\iff & \sum_{l=0}^{q}\frac{(p+q+1)!}{(p+q+1-l)!l!}(-1)^{l} & =(-1)^{q}\frac{(p+q)!}{p!q!}\nonumber \\
&\iff &\sum_{l=0}^{q}\frac{1}{(p+q+1-l)!l!}(-1)^{l} & =(-1)^{q}\frac{1}{(p+q+1)p!q!}\nonumber \\
& \iff & -\frac{1}{p!q!}+(2q+1)(-1)^{q}\sum_{l=0}^{q}\frac{1}{(p+q+1-l)!l!}(-1)^{l} & =-\frac{1}{p!q!}+\frac{(2q+1)}{(p+q+1)p!q!}\label{eq26-1}.
\end{alignat}
Now applying (\ref{eq26-1}) for $p=n-i$ and $q=n-j$ we get
\begin{align}
&-\frac{1}{(n-i)!(n-j)!}+(2(n-j)+1)\sum_{l=0}^{n-j}\frac{(-1)^{n-j-l}}{(2n+1-i-j-l)!l!}\nonumber
\\&\qquad =-\frac{1}{(n-i)!(n-j)!}+\frac{(2(n-j)+1)}{(2n+1-i-j)(n-i)!(n-j)!}\nonumber
\\&\qquad=\frac{i-j}{(2n+1-i-j)(n-i)!(n-j)!}.
\label{eq26-2}
\end{align}

From \eqref{eq26-2} and \eqref{eq:27} we achieve
\[
C_{ij} =\frac{i-j}{(2n+1-i-j)(n-i)!(n-j)!},
\]
which implies that $C_{ji} =-C_{ji}$. Therefore, $T_{11}$ is anti-symmetric and so is $T_1$. 
\subsubsection{Proof of Proposition \ref{prop: aux}--(2): $T_{2}=0$ }
In this section, we prove that $T_2=0$ where
\[
T_{2}=(1-2n)H_{2}^{T}MH_{1}+t\big((H_{2}')^{T}MH_{1}+H_{2}^{T}\,M\,H_{1}'\big)-tQH_{2}^{T}MH_{1}+H_{2}^{T}MH_{0}MH_{1}.
\]
According to Lemma \ref{lem: diag relation}, we have
\[
-2t(H_{2}^{T})^{-1}(H_{2}')^{T}+2t(H_{2}^{T})^{-1}QH_{2}^{T}=-2\mathrm{diag}(0,1,\ldots,n-1),
\]
implying that
\[
-t(H_{2}')^{T}+tQH_{2}^{T}=-H_{2}^{T}\mathrm{diag}(0,...,n-1).
\]
Therefore,
\[
T_{2}=(1-2n)H_{2}^{T}MH_{1}+H_{2}^{T}\mathrm{diag}(0,...,n-1)MH_{1}+tH_{2}^{T}MH_{1}'+H_{2}^{T}MH_{0}MH_{1}.
\]
Since $tH_{1}'H_{1}^{-1}=\mathrm{diag}(0,...,n-1)$, we have 
\[
tH_{2}^{T}MH_{1}'=tH_{2}^{T}MH_{1}'H_{1}^{-1}H_{1}=H_{2}^{T}M\mathrm{diag}(0,...,n-1)H_{1}.
\]
Therefore, we get
\begin{align*}
T_{2} & =(1-2n)H_{2}^{T}MH_{1}+H_{2}^{T}\mathrm{diag}(0,...,n-1)MH_{1}+H_{2}^{T}M\mathrm{diag}(0,..,n-1)H_{1}+H_{2}^{T}MH_{0}MH_{1}\\
 & =H_{2}^{T}[(1-2n)I+\mathrm{diag}(0,\ldots,n-1)+M\mathrm{diag}(0,...,n-1)M^{-1}+MH_{0}]MH_{1}\\
 & =H_{2}^{T}M[M^{-1}\mathrm{diag}(1-2n,\ldots,-n)+\mathrm{diag}(0,...,n-1)M^{-1}+H_{0}]MH_{1}.
\end{align*}

We will prove that $T_{22}=0$ where
\begin{align*}
T_{22}&=M^{-1}\mathrm{diag}(1-2n,\ldots,-n)+\mathrm{diag}(0,...,n-1)M^{-1}+H_{0}
\\&=AB^{-1}\mathrm{diag}(1-2n,...,-n)+\mathrm{diag}(0,...,n-1)AB^{-1}+H_{0}.
\end{align*}
Note that the last equality is because $M^{-1}=AB^{-1}$. We compute each element of $T_{22}$ as follows.
\begin{align*}
(T_{22})_{ij} & =\sum_{k}a_{ik}(B^{-1})_{kj}(j-2n)+(i-1)\sum_{k}A_{ik}(B^{-1})_{kj}+(H_{0})_{ij}\\
 & =\sum_{k}a_{ik}(B^{-1})_{kj}(j-2n+i-1)+(H_{0})_{ij}\\
 & =(j-2n+i-1)\left[-\frac{1}{(2n-j)(n-i)!(n-j)!}+\frac{(2(n-j)+1)}{(2n-j)(2n+1-i-j)(n-i)!(n-j)!}+\frac{(H_{0})_{ij}}{(2n-j)}\right]+(H_{0})_{ij}\\
 & =\frac{(j-2n+i-1)}{(2n-j)}\left[-\frac{1}{(n-i)!(n-j)!}+\frac{(2(n-j)+1)}{(2n+1-i-j)(n-i)!(n-j)!}\right]+\frac{(j-2n+i-1+2n-j)}{2n-j}(H_{0})_{ij}\\
 & =\frac{(j-2n+i-1)}{(2n-j)(n-i)!(n-j)!}\left[-1+\frac{(2(n-j)+1)}{(2n+1-i-j)}\right]+\frac{(i-1)}{2n-j}\frac{1}{(n-i)!(n-j)!}\\
 & =\frac{1}{(2n-j)(n-i)!(n-j)!}\left[-(j-2n+i-1)-(2(n-j)+1)+i-1\right]\\
 & =0
\end{align*}
Therefore $T_{22}=0$. It follows that $T_{2}=0$.

\subsubsection{Proof of Proposition \ref{prop: aux}--(3): $T_{3}$ anti-symmetric:}
In this section, we prove that $T_3$ is anti-symmetric where
\[
T_3=(2n-1)H_{2}^{T}MH_{2}-2t(H_{2}')^{T}MH_{2}+2tQH_{2}^{T}MH_{2}-t^{2-2n}H_{2}^{T}MH_{2}DH_{2}^{T}MH_{2}.
\]
According to Lemma \ref{lem: Ho} $H_{2}DH_{2}^{T}=H_0$; therefore we have
\begin{align*}
T_{3} & =H_{2}^{T}[(2n-1)M-2t(H_{2}^{T})^{-1}(H_{2}')^{T}M+2t(H_{2}^{T})^{-1}QH_{2}^{T}M-MH_{0}M]H_{2}\\
 & =H_{2}^{T}M[M^{-1}(2n-1)-M^{-1}2t(H_{2}^{T})^{-1}(H_{2}')^{T}+M^{-1}2t(H_{2}^{T})^{-1}QH_{2}^{T}-H_{0}]MH_{2}\\
 & =H_{2}^{T}M\left(AB^{-1}[(2n-1)I-2t(H_{2}^{T})^{-1}(H_{2}')^{T}+2t(H_{2}^{T})^{-1}QH_{2}^{T}]-H_{0}\right)MH_{2}
\end{align*}

We define 
\[
T_{31}:=AB^{-1}[(2n-1)I-2t(H_{2}^{T})^{-1}(H_{2}')^{T}+2t(H_{2}^{T})^{-1}QH_{2}^{T}]-H_{0}.
\]
To prove $T_3$ is anti-symmetric, it suffices to show that $T_{31}$ is anti-symmetric.

From \eqref{eq:27} we have:
\[
(2(n-j)+1)\sum_{k}A_{ik}(B^{-1})_{kj}-(H_{0})_{ij} =\frac{i-j}{(2n+1-i-j)(n-i)!(n-j)!},
\]
which implies that (using the fact that $(H_{0})_{ij}=\frac{1}{(n-i)!(n-j!)}$)
\begin{equation}
(AB^{-1})_{ij}=\sum_{k}A_{ik}(B^{-1})_{kj}=\frac{1}{(2n+1-i-j)(n-i)!(n-j)!}\label{eq34-2}.
\end{equation}
Using \eqref{eq34-2} and Lemma \ref{lem: diag relation}, we obtain
\begin{align*}
(T_{31})_{ij} & =\frac{1}{(2n+1-i-j)(n-i)!(n-j)!}(2(n-j)+1)-\frac{1}{(n-i)!(n-j)!}\\
 & =\frac{i-j}{(2n+1-i-j)(n-i)!(n-j)!}.
\end{align*}
Therefore, $T_{31}$ is anti-symmetric and so is $T_3$.
\subsubsection{Proof of Proposition \ref{prop: aux}--(4): $\mathrm{Tr}(DH_{2}^{T}MH_{2})=n^2\,d\,t^{2(n-1)}$}
In this section, we prove that last identity in Proposition \ref{prop: aux}. We will show that
\[
\mathrm{Tr}(DH_{2}^{T}MH_{2})=n^2\,d\,t^{2(n-1)}.
\]

We recall that all the matrices $A, B,H_2, H_1, D, M, L$ and $U$ are of order $dn$. Each entry of these matrix should be understood as a matrix of order $d$ that equals to the entry multiply with the $d$-dimensional identity matrix $I_d$.

Since $D=\mathrm{diag}(0,\ldots,0,1)$, we have $DH_{2}^{T}MH_{2}=\mathrm{diag}(0,\ldots,0,[(H_{2})^{T}MH_{2}]_{nn})$.
Therefore 
\begin{align*}
\mathrm{Tr}(DH_{2}^{T}MH_{2})&=\mathrm{Tr} [(H_{2})^{T}MH_{2}]_{nn}\\
 & =\mathrm{Tr}\Big[\sum_{k,l}(H_{2})_{nl}M_{lk}(H_{2}^{T})_{kn}\Big]\\
 & =\mathrm{Tr}\Big[\sum_{k,l}(H_{2})_{nl}M_{lk}(H_{2})_{nk}\Big]\\
 & =t^{2(n-1)}\mathrm{Tr}[M_{nn}].
\end{align*}
Next we show that $M_{nn}=n^{2} I_d$. According to Theorem \ref{theo: LU of A 2}, we have
\[
M=BA^{-1}=BU^{-1}L^{-1},
\]
where
\[
B_{nk}=\frac{(n+k-1)!}{(k-1)!},\quad(U^{-1})_{kl}=\begin{cases}
\frac{(-1)^{k+l}}{(k-1)!(l-k)!} & l\geq k\\
0 & l<k.
\end{cases},\quad\text{and}\quad (L^{-1})_{ln}=\begin{cases}
1 & l=n\\
0 & l\neq n
\end{cases}.
\]
Therefore, 
\begin{align*}
M_{nn} & =\sum_{k,l}B_{nk}(U^{-1})_{kl}(L^{-1})_{ln}\\
 & =\sum_{k}B_{nk}(U^{-1})_{kn}\\
 & =\sum_{k}\frac{(n+k-1)!}{(k-1)!}\frac{(-1)^{k+n}}{(k-1)!(n-k)!}
\end{align*}
Now we show that
\[
n^{2}=\sum_{i=1}^{n}\frac{(n+i-1)!}{(i-1)!}\frac{(-1)^{i+n}}{(i-1)!(n-i)!}.
\]
Applying \eqref{eq2} for $j=n$ and $k=n+1$ we get
\[
\sum_{i=1}^{n}\frac{(-1)^{i+n}}{(i-1)!(n-i)!}\frac{(n+i-1)!}{(n+i-(n+1))!}=\begin{pmatrix}n+1-1\\
n-1
\end{pmatrix}\frac{n!}{(n-(n+1-n))!},
\]
which implies that
\[
\sum_{i=1}^{n}\frac{(-1)^{i+n}}{(i-1)!(n-i)!}\frac{(n+i-1)!}{(i-1)!}=n^{2}
\]
as desired, that is $M_{nn}=n^2I_d$.

Therefore we get $\mathrm{Tr}(DH_{2}^{T}MH_{2})=t^{2(n-1)}\rm{Tr}(M_{nn})=t^{2(n-1)}n^2 d$ as stated.
\subsection{Proof of Lemma \ref{lem: InverH2H1}}
In this secion we prove Lemma \ref{lem: InverH2H1}: we need to show that $H_{2}^{-1}H_{1}=H$ where $H_1, H_2$ and $H$ are given in  \eqref{eq: H1 and H2} and \eqref{eq: H}.

We will show that $(H_{2}H)_{ik}=(H_{1})_{ik}$
for all $i,k$. 
\begin{itemize}
\item If $k<i$, we have $(H_{2})_{ij}=0$ if $j\le k$ and $H_{jk}=0$
if $j>k$. Therefore,
\begin{equation*}
(H_2H)_{ik}=\sum_{j=1}^{n}(H_{2})_{ij}H_{jk}=\sum_{j=1}^{k}(H_{2})_{ij}H_{jk}+\sum_{j=k+1}^{n}(H_{2})_{ij}H_{jk}=0=(H_{1})_{ik}.
\end{equation*}
\item If $k>i$, we have $(H_{2})_{ij}=0$ if $j\le i$ and $H_{jk}=0$
if $j>k$. Therefore,
\begin{align*}
(H_2H)_{ik}=\sum_{j=1}^{n}(H_{2})_{ij}H_{jk} & =\sum_{j=i}^{k}\frac{h^{j}}{(j-i)!}\frac{(-1)^{k-j}h^{k-j}}{(k-j)!}\\
 & =\sum_{j=i}^{k}\frac{(-1)^{k-j}h^{k-j}}{(j-i)!(k-j)!}\\
 & =\frac{1}{(k-i)!}\sum_{j=i}^{k}\frac{(k-i)!(-1)^{k-j}}{(j-i)!(k-j)!}\\
 & =\frac{1}{(k-i)!}(1+(-1))^{k-i}\\
 & =0=(H_{1})_{ik}.
\end{align*}

\item If $k=i,$ we have $(H_{2})_{ij}=0$ if $j<i$ and $H_{ji}=0$ if
$j>k$. Therefore,
\begin{align*}
(H_2H)_{ik}=\sum_{j=1}^{n}(H_{2})_{ij}H_{ji}=(H_{2})_{ii}H_{ii}=h^{i-1}=(H_{1})_{ik}.
\end{align*}
\end{itemize}
It follows from these cases that $H_{2}H=H_{1}$.

\subsection{Proof of Lemma \ref{lem:K}} 
In this section, we prove Lemma \ref{lem:K}. First we will show that $(H_{2}^{T})^{-1}=P$ where 
\[
P_{lj}=\begin{cases}
0 & \text{ if }l<j\\
\frac{(-1)^{l-j}}{(l-j)!h^{j-1}} & \text{ if }l\ge j.
\end{cases}
\]
We consider the following cases:
\begin{itemize}
\item If $i=j,$ we have $P_{lj}=0$ if $l<i$ and $(H_{2}^{T})_{il}=0$
if $l>i$. Therefore, 
\begin{align*}
\sum_{l=1}^{n}(H_{2}(h))_{il}P{}_{lj} & =(H_{2})_{ii}P{}_{ii}\\
 & =h^{i-1}\frac{1}{h^{i-1}}\\
 & =1.
\end{align*}
\item If $i<j$, we have $P_{lj}=0$ if $l\le i$ and $(H_{2}^{T}(h))_{il}=0$
if $l>i$. Therefore, 
\begin{align*}
\sum_{l=1}^{n}(H_{2})_{il}(H_{2}^{T})_{lj}^{-1} & =\sum_{l=1}^{i}(H_{2})_{il}P_{lj}+\sum_{l=i+1}^{n}(H_{2})_{il}P_{lj}\\
 & =0.
\end{align*}

\item If $i>j$, we have $P_{lj}=0$ if $l<j$ and $(H_{2})_{il}=0$
if $l>i$. Therefore, 
\begin{align*}
\sum_{l=1}^{n}(H_{2})_{il}P_{lj} & =\sum_{l=j}^{i}(H_{2})_{il}P_{lj}\\
 & =\sum_{l=j}^{i}\frac{h^{i-1}}{(i-l)!}\frac{(-1)^{l-j}}{(l-j)!h^{j-1}}\\
 & =\frac{h^{i-j}}{(i-j)!}\sum_{l=j}^{i}\frac{(i-j)!}{(i-l)!(l-j)!}\\
 & =\frac{h^{i-j}}{(i-j)!}(1-1)^{i-j}\\
 & =0.
\end{align*}
\end{itemize}
Therefore, $(H_{2}^{T})^{-1}=P$. We have $\K=(H_2^T M H_1)^{-1}=H_1^{-1} M^{-1} (H_2^T)^{-1}$. Therefore,
\begin{align}
\K_{ij}= & (H_{1}^{-1}M^{-1}(H_{2}^{T})^{-1})_{ij}\nonumber \\
= & \sum_{kl}(H_{1}^{-1})_{ik}(M^{-1})_{kl}(H_{2}^{T})_{lj}^{-1}\text{ (Note that }(H_{1}^{-1})_{ik}=0\text{ if }i\ne k)\nonumber \\
= & \sum_{k=i,l=1}^{n}(H_{1}^{-1})_{ik}(M^{-1})_{kl}(H_{2}^{T})_{lj}^{-1}\nonumber \\
= & \sum_{l=1}^{n}(H_{1}^{-1})_{ii}(M^{-1})_{il}(H_{2}^{T})_{lj}^{-1}\nonumber \\
= & \sum_{l=1}^{n}t^{1-i}\frac{1}{(2n+1-i-l)(n-i)!(n-l)!}(H_{2}^{T})_{lj}^{-1}\nonumber \\
 & \text{ (Note that }(M^{-1})_{ij}=\frac{1}{(2n+1-i-j)(n-i)!(n-j)!},\text{ according to }\eqref{eq34-2})\nonumber \\
= & \sum_{l=j}^{n}t^{1-i}\frac{1}{(2n+1-i-l)(n-i)!(n-l)!}\frac{(-1)^{l-j}}{(l-j)!t^{j-1}}\nonumber \\
= & \frac{h^{1-i}}{h^{j-1}}\sum_{l=j}^{n}\frac{1}{(2n+1-i-l)(n-i)!(n-l)!}\frac{(-1)^{l-j}}{(l-j)!}.\label{eqk}
\end{align}
We now transform further the last expression. Let $\beta=n-j,\alpha=n-i,u=n-j-l$, and $f(x)=\sum_{u=0}^{\beta}\frac{1}{(\alpha+1+u)}\frac{\beta!(-1)^{\beta-u}}{u!(\beta-u)!}x^{u+\alpha}$. From \eqref{eqk}, $\K_{ij}$ can be rewritten as follows
\begin{align}
\K_{ij} & =h^{-i-j+2}\sum_{l=0}^{n-j}\frac{1}{(2n+1-i-j-l)(n-i)!(n-j-l)!}\frac{(-1)^{l}}{l!}\nonumber \\
 & =\frac{h^{-i-j+2}}{\beta!\alpha!}\sum_{u=0}^{\beta}\frac{1}{(\alpha+1+u)}\frac{\beta!(-1)^{\beta-u}}{u!(\beta-u)!},\\
 & =\frac{h^{-i-j+2}}{\beta!\alpha!}f(1).\label{eqk2}
\end{align}
We now compute $f(1)$ in an alternative way. We have 
\begin{align*}
f(x) & =\int_{0}^{x}\sum_{u=0}^{\beta}\frac{\beta!(-1)^{\beta-u}}{u!(\beta-u)!}t^{u+\alpha}dt\\
 & =\int_{0}^{x}t^{\alpha}(t-1)^{\beta}dt.
\end{align*}
In particular, taking $x=1$ yields
\begin{equation}
f(1)=\int_{0}^{1}t^{\alpha}(t-1)^{\beta}dt.\label{eqk3}
\end{equation}

We now compute the integral on the RHS of \eqref{eqk3}. We have
\begin{align}
\int_{0}^{1}t^{\alpha}(t-1)^{\beta}dt & =\frac{1}{\alpha+1}\int_{0}^{1}(t-1)^{\beta}dt^{\alpha+1}\nonumber \\
 & =\frac{1}{\alpha+1}(t-1)^{\beta}t^{\alpha+1}|_{t=0}^{1}-\frac{\beta}{\alpha+1}\int_{0}^{1}t^{\alpha+1}(t-1)^{\beta-1}dt\nonumber \\
 & =-\frac{\beta}{\alpha+1}\left(\frac{1}{\alpha+2}(t-1)^{\beta-1}t^{\alpha+2}|_{t=0}^{1}-\frac{\beta-1}{\alpha+2}\int_{0}^{1}t^{\alpha+1}(t-1)^{\beta-1}dt\right)\nonumber \\
 & =\frac{\beta}{\alpha+1}\frac{\beta-1}{\alpha+2}\int_{0}^{1}(t-1)^{\beta-2}t^{\alpha+2}dt\nonumber \\
 & =...\nonumber \\
 & =(-1)^{\beta}\frac{\beta}{\alpha+1}\frac{\beta-1}{\alpha+2}...\frac{1}{\alpha+\beta}\int_{0}^{1}t^{\alpha+\beta}dt\nonumber \\
 & =(-1)^{\beta}\frac{\beta!}{(\alpha+1)...(\alpha+\beta)}\frac{1}{\alpha+\beta+1}t^{\alpha+\beta+1}|_{t=0}^{1}\nonumber \\
 & =(-1)^{\beta}\frac{\beta!\alpha!}{(\alpha+\beta+1)!}.\label{eqk4}
\end{align}

It follows from \eqref{eqk2},\eqref{eqk3} and \eqref{eqk4}) that
\begin{align*}
\K_{ij} & =\frac{h^{-i-j+2}}{\beta!\alpha!}(-1)^{\beta}\frac{\beta!\alpha!}{(\alpha+\beta+1)!}\\
 & =\frac{1}{h^{j+i-2}}\frac{(-1)^{n-j}}{(2n-i-j+1)!},
\end{align*}
which finishes the proof of Lemma \ref{lem:K}.
\section*{Acknowledgements}
The majority of this paper was written when M. H. Duong was at the Mathematics Institute, University of Warwick and was supported by ERC Starting Grant 335120 while H. M. Tran was at the Department of Industrial Engineering, Texas A\&M University.

\bibliographystyle{alpha}
\bibliography{bibDUONG}

\end{document}